 \theoremstyle{plain}
\newtheorem{theorem}{Theorem}[section]
\newtheorem{lemma}[theorem]{Lemma}
\newtheorem{corollary}[theorem]{Corollary}
\newtheorem{proposition}[theorem]{Proposition}
\theoremstyle{definition}
\newtheorem{definition}[theorem]{Definition}
\newtheorem{remark}[theorem]{Remark}
\theoremstyle{remark}
\newtheorem*{example}{Example}
\numberwithin{equation}{section}
\begin{document}

\captionsetup[figure]{labelfont={bf},labelformat={default},labelsep=period,name={Fig.}}

\title[Moduli spaces of self-avoiding linkages]{Morse theory and moduli spaces of self-avoiding polygonal linkages}

\author{Te Ba}
\address{School of Mathematics, Hunan University, Changsha 410082, China}
\email{batexu@hnu.edu.cn}
\author{Ze Zhou}
\address{School of Mathematical Sciences, Shenzhen University,  Shenzhen 518060, China }
\email{zhouze@szu.edu.cn}

\date{}

\subjclass{Primary 52C25; Secondary 57R70, 68U05.}

\begin{abstract}
We show that a smooth $d$-manifold $M$ is diffeomorphic to $\mathbb R^d$ if it admits a Lyapunov-Reeb function, i.e., a smooth map $f:M\to\mathbb R$ that is proper, lower-bounded, and has a unique critical point. By constructing such functions, we prove that the moduli spaces of self-avoiding polygonal linkages and configurations are diffeomorphic to Euclidean spaces. This resolves the Refined Carpenter's Rule Problem and confirms a conjecture proposed by Gonz\'{a}lez and Sedano-Mendoza. Furthermore, we describe foliation structures of these moduli spaces via level sets of Lyapunov-Reeb functions and develop algorithms for related problems.
\end{abstract}

\maketitle

\section{Introduction}\label{S-1}
Let $G(V,E)$ be a simple planar graph and let $\ell:E\to\mathbb{R}_+$ be an assignment of positive reals to the edges of $G$. A \textbf{configuration} for $G$ in $\mathbb R^2$ is a map $p:G\to\mathbb R^{2}$ such that the image $p(e)$ is a line segment for each $e\in E$, and a \textbf{linkage} for $(G,\ell)$ in $\mathbb R^2$ is a configuration $p: G\to \mathbb R^{2}$ such that $p(e)$ has length $\ell(e)$ for every $e\in E$.  Over the past decades, linkage theory has emerged as a foundational framework across diverse disciplines, including discrete and computational geometry~\cite{Hopcroft-1985,Alt-2003}, topological robotics~\cite{Lumelsky-1987}, molecular biology~\cite{Aichholzer-2003}, polymer physics~\cite{Frank-Kamenetskii-1997}, and related fields. For additional background, we refer readers to the seminal monographs by Demaine and O'Rourke~\cite{Demaine-2007}, Farber~\cite{Farber-2008}, and Connelly and Demaine~\cite{Connelly-2009}.

We say a configuration $p: G\to \mathbb R^{2}$ is \textbf{self-avoiding} if it is an embedding. The moduli space $\mathcal M(G)$ is defined to be the set of equivalence classes of all self-avoiding configurations for $G$ in $\mathbb R^2$, where two configurations $p_1,p_2: G\to \mathbb R^{2}$ are equivalent if there exists an orientation-preserving isometry
$\zeta:\mathbb R^{2}\to \mathbb R^{2}$ such that $p_2=\zeta\circ p_1$.  For $p_1,p_2\in\mathcal M(G)$, the distance between them is defined as
\[
d(p_1,p_2)=\inf\nolimits_{\zeta}\left(\sum\nolimits_{v\in V}\|p_2(v)-\zeta\circ p_1(v)\|\right),
\]
where the  infimum is taken over all orientation-preserving isometries of $\mathbb R^2$. Similarly, we define $\mathcal M(G,\ell)\subset \mathcal M(G)$ as the subset of equivalence classes of self-avoiding linkages for $(G,\ell)$ in $\mathbb R^2$ and endow it with the induced metric.

In this paper we restrict our attention to polygonal linkages and configurations. Specifically, we assume $G=R_m$ with $m\geq2$ or $G=C_m$ with $m\geq3$, where $R_m$ (resp. $C_m$) denotes the graph with vertices $v_1,\cdots, v_m$ such that an edge exists between $v_i$ and $v_j$ if and only if
\[
|j-i|=1\quad \left(\text{resp}.\;\;|j-i|=1\;\;\text{or}\;\;|j-i|=m-1\right).
\]
Up to orientation-preserving isometries, the moduli space $\mathcal M(R_m)$
(resp. $\mathcal M(C_m)$) parameterizes all simple polylines (resp. polygons) in
$\mathbb R^2$ with $m$ marked vertices. Accordingly, the subspace
$\mathcal M(R_m,\ell)\subset \mathcal M(R_m)$
(resp. $\mathcal M(C_m,\ell)\subset \mathcal M(C_m)$) parameterizes simple polylines (resp. polygons) with side lengths prescribed by $\ell$. We call an element in $\mathcal M(R_m)$ (resp. $\mathcal M(R_m,\ell)$) an \textbf{arm configuration} (resp. \textbf{arm linkage}) and call an element in $\mathcal M(C_m)$ (resp. $\mathcal M(C_m,\ell)$) a \textbf{cycle configuration} (resp. \textbf{cycle linkage}), as illustrated in Fig.~\ref{F-1}.

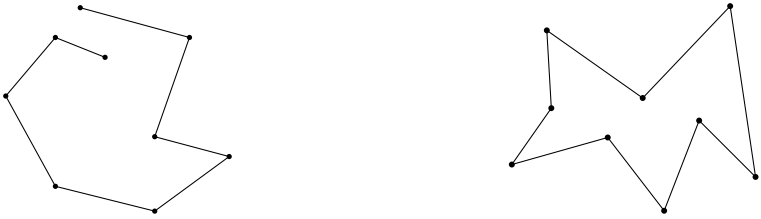
\begin{figure}[htbp]
     \centering
     \begin{minipage}[b]{0.40\textwidth}
     \centering
\begin{tikzpicture}[scale=0.66]
\coordinate (a) at (-0.5,-0.4);
\coordinate (b) at (1.7,-1);
\coordinate (c) at (1,-3);
\coordinate (d) at (2.5,-3.4);
\coordinate (e) at (1,-4.5);
\coordinate (f) at (-1,-4);
\coordinate (g) at (-2,-2.18);
\coordinate (h) at (-1,-1);
\coordinate (i) at (0,-1.4);
\draw (a)--(b)--(c)--(d)--(e)--(f)--(g)--(h)--(i);
\fill (a) circle (1.5pt);
\fill (b) circle (1.5pt);
\fill (c) circle (1.5pt);
\fill (d) circle (1.5pt);
\fill (e) circle (1.5pt);
\fill (f) circle (1.5pt);
\fill (g) circle (1.5pt);
\fill (h) circle (1.5pt);
\fill (i) circle (1.5pt);
\end{tikzpicture}
     \end{minipage}
     \hfill
     \begin{minipage}[b]{0.50\textwidth}
     \centering
\begin{tikzpicture}[scale=0.75]
\coordinate (a) at (-0.7,0.2);
\coordinate (b) at (1,-1);
\coordinate (c) at (2.55,0.63);
\coordinate (d) at (3,-2.4);
\coordinate (e) at (2,-1.4);
\coordinate (f) at (1.38,-3);
\coordinate (g) at (0.38,-1.7);
\coordinate (h) at (-1.32,-2.18);
\coordinate (i) at (-0.62,-1.18);

\draw (a)--(b)--(c)--(d)--(e)--(f)--(g)--(h)--(i)--(a);
\fill (a) circle (1.5pt);
\fill (b) circle (1.5pt);
\fill (c) circle (1.5pt);
\fill (d) circle (1.5pt);
\fill (e) circle (1.5pt);
\fill (f) circle (1.5pt);
\fill (g) circle (1.5pt);
\fill (h) circle (1.5pt);
\fill (i) circle (1.5pt);
\end{tikzpicture}
     \end{minipage}
     \hfill
\caption{The left is an arm linkage and the right is a cycle linkage}\label{F-1}
\end{figure}

A fundamental problem in linkage theory is to investigate the topology of moduli spaces. For instance, the connectivity of $\mathcal M(R_m,\ell)$ or $\mathcal M(C_m,\ell)$ is closely tied to the Carpenter's Rule Problem: \emph{Is it always possible to deform any simple polyline to a line segment} (by G. Bergman), \emph{or to deform any simple polygon to a convex polygon} (by S. Schanuel), \emph{such that no self-intersection is created and each side remains a fixed length during the deformation process?} It was mentioned in Kirby's list of \textsl{Problems in Low-Dimensional Topology} (see~\cite[Problem 5.18]{Kirby-1997}) and remained open for a long time until Connelly, Demaine and Rote~\cite{Connelly-2003} provided a celebrated answer in 2003. An alternative approach, along with an efficient algorithm, was developed by Streinu~\cite{Streinu-2005}. Additionally, Lenhart and Whitesides~\cite{Lenhart-1995} and Aichholzer et al.~\cite{Aichholzer-2001} described the topology of subspaces of convex cycle linkages. The results combined lead to the following theorem.

\begin{theorem}\label{T-1-1} Let $R_m$ and $C_m$ be defined as above. The following hold:
\begin{itemize}
\item[$(i)$] $\mathcal M(R_m,\ell)$ is path-connected.
\item[$(ii)$] $\mathcal M(C_m,\ell)$ is either empty or consists of exactly two connected components, where each linkage in one component is the reflection of a linkage in the other component.
\end{itemize}
\end{theorem}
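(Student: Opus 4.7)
The plan is to combine the celebrated \emph{expansive motion} technique of Connelly, Demaine and Rote with the explicit topological description of convex cycle-linkage spaces due to Lenhart--Whitesides and Aichholzer et al.

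\smallskip

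For part $(i)$, I would first reduce the problem to showing that every arm linkage can be continuously deformed inside $\mathcal M(R_m,\ell)$ to a \emph{straight} linkage (all vertices collinear with monotone parameter along the line), since the set of straight arm linkages is a single point of $\mathcal M(R_m)$ up to orientation-preserving isometry. The strategy is to produce, at every non-straight $p\in\mathcal M(R_m,\ell)$, an infinitesimal motion $\dot p$ that (a) preserves every edge length to first order, and (b) is \emph{expansive}, meaning $\frac{d}{dt}\|p(v_i)-p(v_j)\|^2\geq 0$ for every pair $i\neq j$, with strict inequality for at least one pair including the endpoints. Such an infinitesimal motion can be extracted by a duality/LP argument: the cone of edge-length-preserving, pairwise-distance-nondecreasing infinitesimal motions is nonempty whenever the linkage is not already straight, because obstructions to its existence would force a self-stress incompatible with planarity of $R_m$. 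Integrating a suitable vector field built from these infinitesimal motions gives a flow on $\mathcal M(R_m,\ell)$; expansiveness guarantees no two non-adjacent vertices ever collide, so the flow stays inside the self-avoiding locus, and a monotonicity/compactness argument shows the flow converges to the straight configuration.

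\smallskip

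For part $(ii)$, the same expansive-motion philosophy applies, but targeted at \emph{convexifying} a cycle linkage rather than straightening. I would show that if $p\in\mathcal M(C_m,\ell)$ is not convex, then again an edge-length-preserving expansive infinitesimal motion exists (here one exploits a reflex vertex to find the right direction), and the resulting flow drives $p$ into the subspace $\mathcal M^{\mathrm{conv}}(C_m,\ell)$ of convex cycle linkages in finite time or as a limit. Then I would invoke the results of Lenhart--Whitesides and Aichholzer et al., which describe $\mathcal M^{\mathrm{conv}}(C_m,\ell)$ as connected whenever nonempty (essentially: the vertex-angle parameter space of convex polygons with prescribed side lengths is a convex polytope minus lower-dimensional strata). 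Since orientation-preserving isometries cannot identify a convex polygon with its mirror image unless the polygon has an appropriate symmetry---and the mirror image gives a distinct element of $\mathcal M(C_m)$ ---we obtain exactly two components of $\mathcal M(C_m,\ell)$, interchanged by reflection.

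\smallskip

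The main obstacle is clearly the construction of the expansive infinitesimal motion at an arbitrary non-straight (resp. non-convex) self-avoiding linkage. A naive infinitesimal flex may be blocked by the length constraints, and proving that the cone of expansive infinitesimal motions is nonempty requires a subtle rigidity-theoretic argument: one formulates a linear program whose infeasibility would produce, via Farkas' lemma, a self-stress with positive coefficients on all ``expansion'' pairs, and then derives a contradiction from planarity (e.g., via Maxwell--Cremona lifting or a direct summation of torques). A secondary difficulty is controlling the global dynamics of the flow so that it actually reaches, rather than merely approaches, the target stratum; this is handled by a careful choice of objective function (such as the sum of squared pairwise distances, or the outer-radius) and a compactness argument using the properness of the edge-length constraints modulo isometry.
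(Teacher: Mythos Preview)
Your proposal is correct and matches the approach the paper attributes to this result. Note, however, that the paper does \emph{not} supply its own proof of Theorem~1.1: it is presented as background, assembled from the cited works of Connelly--Demaine--Rote (existence of expansive motions for non-straight arms and non-convex cycles), Streinu (an alternative algorithmic route), and Lenhart--Whitesides together with Aichholzer et al.\ (connectedness of the convex-cycle subspace). Your outline---straighten via expansive flow for~$(i)$, convexify via expansive flow and then invoke connectedness of convex linkages for~$(ii)$---is precisely this synthesis, and your identification of the LP/Farkas/Maxwell--Cremona step as the crux of the expansive-motion existence is accurate.
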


\begin{remark}\label{R-1-2}
By a theorem of Penner~\cite[Theorem 6.2]{Penner-1987}, $\mathcal M(C_m,\ell)\neq \emptyset$ if and only if $\ell:E\to \mathbb R_{+}$ satisfies the following condition:
\begin{itemize}
\item[$\mathrm{\mathbf{(c1)}}$]For every $e_i\in E$, $\ell(e_i)<\sum_{e_j\in E\setminus\{e_i\}} \ell(e_j)$.
\end{itemize}
\end{remark}

It is of interest to explore topological properties of these moduli spaces in greater depth. Let $\mathcal M^{+}(C_m,\ell)\subset \mathcal M(C_m,\ell)$ be the component of positively oriented cycle linkages. Connelly, Demaine and Rote~\cite{Connelly-2003} claimed $\mathcal M(R_m,\ell)$ and $\mathcal M^{+}(C_m,\ell)$ are contractible, with proofs later provided in Farber's monograph~\cite{Farber-2008} and research of Han et al.~\cite{Han-2013} and Shimamoto and Wootters~\cite{Shimamoto-2014}. Strikingly, Shimamoto and Wootters~\cite{Shimamoto-2014} further deduced that $\mathcal M^{+}(C_m,\ell)$ is homeomorphic to $\mathbb R^{m-3}$
under condition~$\mathrm{\mathbf{(c1)}}$ and the constraint that $\sum_{e_i\in E}\sigma(e_i)\ell(e_i)\neq0$ for every sign assignment $\sigma: E\to\{-1,1\}$ to the edges.

These results motivate the Refined Carpenter's Rule Problem for cycle linkages: \emph{Is $\mathcal M^{+}(C_m,\ell)$ homeomorphic to $\mathbb R^{m-3}$ whenever $\mathcal M^{+}(C_m,\ell)\neq \emptyset$?} This problem is tantamount to asking whether the constraint stipulated in Shimamoto and Wootters' result can be removed. The core challenge arises when a sequence of linkages in $\mathcal M^{+}(C_m,\ell)$ approaches a pair of overlapping line segments (see Remark~\ref{R-4-13}).
An analogous question can also be posed for the moduli space $\mathcal M(R_m,\ell)$.

Recently, characterizing the topology of the moduli space $\mathcal M(C_m)$ has garnered significant attention. In~\cite{Gonzalez-2018}, Gonz\'{a}lez proved that $\mathcal M(C_m)$ consists of two homeomorphic components, each of which is simply connected. Let $\mathcal M^{+}(C_m)\subset \mathcal M (C_m)$ be the component corresponding to positively oriented polygons. Subsequently, Gonz\'{a}lez and Sedano-Mendoza~\cite[Conjecture 1.1]{Gonzalez-2023} proposed the following conjecture: \emph{$\mathcal M^{+}(C_m)$ is homeomorphic to $\mathbb R^{2m-3}$}. For $m=4$, this
conjecture was validated by Gonz\'{a}lez and  L\'{o}pez-L\'{o}pez~\cite{Gonzalez-2016}. Thereafter, Fortier Bourque~\cite{Fortier Bourque-2024} investigated the moduli spaces of marked immersed polygons and solved the problem for $m=5$. To date, the general case for $m>5$ remains open.

This paper aims to address the aforementioned problems. We first note that the moduli spaces $\mathcal M(R_m)$, $\mathcal M(R_m,\ell)$, $\mathcal M^{+}(C_m)$, and $\mathcal M^{+}(C_m,\ell)$ are smooth (i.e., $C^\infty$) manifolds (see Lemma~\ref{L-3-1} and Lemma~\ref{L-4-1}). Leveraging smooth structures, we further give a complete description of the topology of these moduli spaces. In what follows, the Euclidean $d$-space $\mathbb R^d$ and the $d$-sphere $S^d$ are assumed to carry their standard smooth structures.

\begin{theorem}\label{T-1-3}
Suppose $m\geq 2$ and suppose $\ell:E\to\mathbb R_{+}$ is an arbitrary assignment. We have the following conclusions:
\begin{itemize}
\item[$(i)$] $\mathcal M(R_m,\ell)$ is diffeomorphic to $\mathbb{R}^{m-2}$.
\item[$(ii)$]$\mathcal M(R_m)$ is diffeomorphic to $\mathbb{R}^{2m-3}$.
\end{itemize}
\end{theorem}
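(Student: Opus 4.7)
The plan is to apply the paper's abstract result---that a smooth $d$-manifold admitting a Lyapunov-Reeb function is diffeomorphic to $\mathbb R^d$---by constructing Lyapunov-Reeb functions on $\mathcal M(R_m,\ell)$ and $\mathcal M(R_m)$. The requisite smooth manifold structures of dimensions $m-2$ and $2m-3$ are provided by Lemma~\ref{L-3-1}.

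For Part $(i)$, the anticipated unique critical point is the straight arm linkage $p^*_\ell$, i.e.\ the configuration in which all edges are co-oriented along a single line. A natural leading-order function is
\[
f_0(p)=L^2-\|p(v_m)-p(v_1)\|^2=2\sum_{i<j}\ell(e_i)\ell(e_j)\bigl(1-\cos(\phi_i-\phi_j)\bigr),
\]
where $\phi_i$ is the direction angle of the $i$th edge and $L=\sum_i\ell(e_i)$. A direct variational computation shows that critical points satisfy $A\sin\phi_k=B\cos\phi_k$ for all $k$, with $A=\sum_i\ell(e_i)\cos\phi_i$ and $B=\sum_i\ell(e_i)\sin\phi_i$; either $(A,B)=0$, which forces $p(v_1)=p(v_m)$ in contradiction to self-avoidance, or all $\phi_i$ agree modulo $\pi$, and since a self-avoiding arm lying on a single line must be monotone, this yields $p=p^*_\ell$. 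I would then add a barrier
\[
B(p)=\chi\bigl(f_0(p)\bigr)\sum_{i,j\text{ non-adjacent}}\psi\bigl(d(p(e_i),p(e_j))\bigr),
\]
with $d$ a smooth proxy for the Euclidean segment-to-segment distance, $\psi$ smooth and positive with $\psi\to\infty$ as its argument tends to $0$ and $\psi\equiv 0$ for arguments $\geq r_0$, and $\chi$ a smooth cutoff vanishing on a sublevel set $\{f_0\leq c\}$. Then $f_\ell=f_0+B$ is smooth, bounded below, and proper, and, provided $B$ is shaped so that $\nabla B$ never cancels $\nabla f_0$ on $\operatorname{supp}B$, has $p^*_\ell$ as its unique critical point; the abstract theorem then gives $(i)$.

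For Part $(ii)$, I would exploit the smooth submersion $\pi:\mathcal M(R_m)\to\mathbb R_+^{m-1}$ recording edge-lengths, whose section $\sigma(\ell)=p^*_\ell$ is the locus of straight arms. The function $f_0$ extends to $\mathcal M(R_m)$ (with $L=L(p)$), vanishes precisely on this section, and hence $df_0\equiv 0$ along it. Combined with the pullback of the base Lyapunov-Reeb function $g(\ell)=\sum_i(\ell_i+1/\ell_i-2)$, the sum $f_0+g$ has a unique critical point at the unit straight arm: fiberwise criticality forces the critical point to lie on the section (the fiberwise gradient of $f_0$ is nonzero off the section, while $\nabla g$ is horizontal and cannot cancel it), and on the section $f_0+g$ reduces to $g$, whose unique minimum is at $\ell=(1,\ldots,1)$. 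Adding the same type of barrier $B$ yields a Lyapunov-Reeb function on $\mathcal M(R_m)$, giving the diffeomorphism with $\mathbb R^{2m-3}$.

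The principal obstacle is verifying that the barrier $B$ never creates spurious critical points, i.e.\ that $\nabla f_0+\nabla B$ (and its analogue in $(ii)$) vanishes only at the intended critical point. I plan to handle this by localizing $B$ through the cutoff $\chi$ to a collar of the self-intersection boundary on which $\nabla f_0$ has a controlled direction, and by shaping $\psi$ so that $\nabla B$ lies in a convex cone containing $\nabla f_0$ on $\operatorname{supp}B$. The ancillary tasks---smoothing the segment-to-segment distance, checking properness at all ends of the moduli space (including the collapsing- and exploding-edge ends in $(ii)$), and confirming that $f_0+g$ has no stray critical point off the section---are routine once this gradient-matching is secured.
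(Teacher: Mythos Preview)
Your overall plan is sound---construct an L-R function and invoke Theorem~\ref{T-1-7}---but the barrier step contains a genuine gap that is not ``routine.''

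The problem is the interaction between $\nabla f_0$ and $\nabla B$. You propose to ensure they never cancel by shaping $\psi$ so that $\nabla B$ lies in a convex cone containing $\nabla f_0$ on $\operatorname{supp} B$. But the direction of $\nabla B$ is dictated by the geometry of the near-self-intersection: it points in whatever direction most efficiently separates the offending pair of edges. There is no a priori reason this direction should be positively correlated with the direction that increases $\|p(v_m)-p(v_1)\|$. For a tightly folded arm (say a zigzag in which $e_2$ and $e_4$ nearly touch while $v_1$ and $v_m$ are already far apart), the motion that separates $e_2$ from $e_4$ can easily \emph{decrease} the end-to-end distance, so $\nabla B$ and $\nabla f_0$ can point in roughly opposite directions. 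Your cutoff $\chi(f_0(p))$ does not help, because such configurations can have $f_0$ bounded away from zero. In short, the alignment of gradients you postulate is the heart of the matter, not an ancillary task; establishing it is at least as hard as the theorem itself.

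The paper sidesteps this difficulty by invoking the Connelly--Demaine--Rote expansive-motion theorem (Theorem~\ref{T-3-2}): every non-straight arm linkage admits a motion along which \emph{all} pairwise vertex distances are non-decreasing, with at least one strictly increasing. A \emph{strain energy} (Definition~\ref{D-1-9}) is, by definition, a proper nonnegative smooth function that strictly decreases along every expansive motion; the explicit example in Remark~\ref{R-1-10} already has the barrier behavior built in. Theorem~\ref{T-1-11} then shows any strain energy is an L-R function: at a non-straight linkage there is an expansive motion, hence a direction of strict decrease, so the point cannot be critical. For part~$(ii)$ the paper projects to the straight-configuration section via $\varsigma$ and takes $f=\Phi-\Phi\circ\varsigma+h\circ\varsigma$ (Theorem~\ref{T-3-3}), which is close in spirit to your $f_0+g$ idea, but again the strain-energy/expansive-motion machinery does the work of excluding spurious critical points. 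The missing ingredient in your argument is precisely this Connelly--Demaine--Rote input; once you have it, the hand-built $f_0+B$ becomes unnecessary, since a strain energy already does everything you need.
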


\begin{theorem}\label{T-1-4}
Suppose $m\geq 3$ and suppose $\ell:E\to\mathbb R_{+}$ is an assignment satisfying condition $\mathrm{\mathbf{(c1)}}$
(in Remark~\ref{R-1-2}). The following statements hold:
\begin{itemize}
\item[$(i)$]  $\mathcal M^{+}(C_m,\ell)$ is diffeomorphic to $\mathbb{R}^{m-3}$.
\item[$(ii)$] $\mathcal M^{+}(C_m)$ is diffeomorphic to $\mathbb{R}^{2m-3}$.
\end{itemize}
\end{theorem}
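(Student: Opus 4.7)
The plan is to invoke the main Lyapunov-Reeb criterion of this paper: any smooth $d$-manifold that admits a smooth, proper, lower-bounded function with a unique critical point is diffeomorphic to $\mathbb R^d$. By Lemma~\ref{L-4-1}, $\mathcal M^{+}(C_m,\ell)$ and $\mathcal M^{+}(C_m)$ are smooth manifolds of dimensions $m-3$ and $2m-3$ respectively, so in each case the problem reduces to constructing such a function.

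For part (i) the natural candidate is an \emph{interior barrier} against the boundary stratum of self-intersecting configurations, for example
\[
F_\ell(p)=\sum_{(v_k,\,e_i)\text{ non-incident in }C_m}\phi\!\bigl(d(p(v_k),p(e_i))\bigr),
\]
where $\phi:(0,\infty)\to\mathbb R_{\geq 0}$ is smooth, strictly decreasing, strictly convex, with $\phi(r)\to\infty$ as $r\to 0^{+}$ and $\phi(r)\to 0$ as $r\to\infty$. Smoothness, invariance under orientation-preserving isometries, and lower-boundedness are immediate. Properness on $\mathcal M^{+}(C_m,\ell)$ uses the fact that the perimeter is fixed, so up to isometry every divergent sequence in $\mathcal M^{+}(C_m,\ell)$ must approach a non-self-avoiding configuration; this forces some non-incident vertex-edge distance to tend to zero and hence $F_\ell\to\infty$, which is exactly what is needed to handle the overlapping-segments issue flagged in Remark~\ref{R-4-12}.

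The heart of the argument, and the main obstacle, is uniqueness of the critical point of $F_\ell$. The Connelly-Demaine-Rote expansive-motion theorem supplies at every non-convex self-avoiding linkage a smooth infinitesimal motion that strictly increases every pair of non-adjacent distances; with $\phi$ strictly decreasing this yields a strict descent direction for $F_\ell$, so every critical point of $F_\ell$ lies in the (open, contractible) cell of positively-oriented convex $\ell$-linkages described by Lenhart-Whitesides and Aichholzer et al. Uniqueness of the minimum of $F_\ell$ on this convex cell should then follow from a Hessian-positivity argument, with $\phi$ tuned so that the strict convexity of each summand on the cell survives the closure constraints. Verifying this strict convexity along all independent deformations is the delicate technical step; it must in particular rule out any positive-dimensional locus of critical convex linkages.

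For part (ii) one can either build a Lyapunov-Reeb function on $\mathcal M^{+}(C_m)$ directly by augmenting $F_\ell$ with an edge-length penalty such as $\sum_{e\in E}\bigl(\ell_{p}(e)+\ell_{p}(e)^{-1}\bigr)$, which blows up both as edges degenerate and as condition $\mathrm{\mathbf{(c1)}}$ fails, or one uses the natural smooth submersion $\pi:\mathcal M^{+}(C_m)\to\Delta$ onto the open $\mathrm{\mathbf{(c1)}}$-polytope $\Delta$, whose fibers are the spaces from (i); combining a proper function on $\Delta\cong\mathbb R^{m}$ with a smoothly varying fiberwise Lyapunov-Reeb function from part (i) yields a Lyapunov-Reeb function on the total space. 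Either variant is routine once (i) is established, so the entire difficulty of Theorem~\ref{T-1-4} is concentrated in the fiberwise uniqueness-of-critical-point step discussed above.
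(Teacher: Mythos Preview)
Your overall architecture is the same as the paper's: invoke Lemma~\ref{L-4-1} for the smooth structure, then apply Theorem~\ref{T-1-7} once a Lyapunov--Reeb function has been exhibited, and use the Connelly--Demaine--Rote expansive-motion theorem to rule out critical points at non-convex configurations. That part is sound.

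The genuine gap is exactly the step you flag yourself: uniqueness of the critical point of your barrier $F_\ell$ on the convex cell. You propose to obtain it by ``tuning $\phi$'' so that a Hessian-positivity argument goes through, but you do not carry this out, and there is no evident reason it should succeed. A strain energy, by its defining property, is controlled only along \emph{expansive} motions; once the linkage is convex the expansive-motion argument gives you nothing, and the restriction of a generic strain energy to the convex cell can perfectly well have several critical points. Nothing in the vertex--edge distance sums singles out a distinguished convex polygon.

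The paper does not attempt to make a strain energy by itself have a unique critical point. Instead it takes
\[
f=\frac{1}{A}+w\,\Phi,
\]
where $A$ is the enclosed area, $\Phi$ is any strain energy, and $w$ is a nonnegative smooth bump that \emph{vanishes identically on convex configurations} (it is built from the reflex interior angles). On the non-convex part the three ingredients all move in the right direction under expansive motions (Lemma~\ref{L-4-9}, Lemma~\ref{L-4-10}, and the definition of strain energy), so there are no critical points there. On the convex cell $w\equiv 0$, so $f=1/A$ and the problem reduces to the classical fact that the area function on $\mathcal M^{+}(C_m,\ell)$ has the cocircular linkage as its unique critical point (Propositions~\ref{P-4-4} and~\ref{P-4-8}). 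Properness of $f$ is then checked directly, with the $1/A$ term handling precisely the overlapping-segments degeneration you were worried about. In short, the paper sidesteps your ``delicate technical step'' by arranging that on the convex cell the strain energy contributes nothing and the analysis reduces to the well-understood area function; this is the idea your proposal is missing.

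For part~(ii) your second suggestion (submersion onto the $\mathrm{\mathbf{(c1)}}$-polytope plus a fiberwise L--R function) is close in spirit to what the paper does: it projects to the space of \emph{cocircular} configurations via $\tau$ (Lemma~\ref{L-4-13}) and sets
\[
f=\frac{1}{A}-\frac{1}{A\circ\tau}+w\,\Phi+h\circ\tau
\]
with $h$ an explicit L--R function on $\mathcal Q^{+}(C_m)$ (Theorem~\ref{T-4-15}, Lemma~\ref{L-4-15}). But again this only works because the fiberwise critical point is pinned down by the area function, not by the strain energy.
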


\begin{remark}
Theorems~\ref{T-1-3} and~\ref{T-1-4} resolve the Refined Carpenter's Rule Problem, offering a unified generalization of the results of Connelly, Demaine and Rote~\cite{Connelly-2003} and Shimamoto and Wootters~\cite{Shimamoto-2014}. Moreover, part $(ii)$ of Theorem~\ref{T-1-4} confirms the conjecture of Gonz\'{a}lez and Sedano-Mendoza~\cite[Conjecture 1.1]{Gonzalez-2023}.
\end{remark}

For the proofs of the above two results, we shall make use of Morse theory, which has motivated extensive research on linkages allowing self-intersections. Relevant works include those by  Hausmann~\cite{Hausmann-1989}, Kapovich and Millson~\cite{Kapovich-1995}, Kamiyama~\cite{Kamiyama-1998}, Milgram and Trinkle~\cite{Milgram-2004}, Shimamoto and Vanderwaart~\cite{Shimamoto-2005}, Farber and Sch\"{u}tz~\cite{Farber-2007}, Khimshiashvilli~\cite{Khimshiashvili-2009}, Sch\"{u}tz~\cite{Schuetz-2010,Schuetz-2013,Schuetz-2016}, Farber, Hausmann and Sch\"{u}tz~\cite{Farber-Hausmann-2011}, Farber and Fromm~\cite{Farber-Fromm-2012}, among others.

However, to adapt to the study of self-avoiding linkages and configurations, we need to go a step further to develop Morse theory for open manifolds. To start with, we assume $M$ is a smooth manifold.

\begin{definition}\label{D-1-6}
A smooth map $f:M\to \mathbb R$ is called a \textbf{Lyapunov-Reeb function} (or \textbf{L-R function} for short) on $M$ if it is proper, lower-bounded, and has exactly one critical point.
\end{definition}

Recall that a \textbf{proper map} between two topological spaces means the preimage of every compact subset is compact. We establish the following result, analogous to the Reeb Sphere Theorem~\cite{Reeb-1946,Milnor-1964} and several consequences from Wilson~\cite{Wilson-1967}.

\begin{theorem}\label{T-1-7}
Let $M$ be a smooth manifold of dimension $d\geq 1$ that admits a Lyapunov-Reeb function $f$. The following properties hold:
\begin{itemize}
\item[$(i)$] Given that $M$ is equipped with a Riemannian metric, the negative gradient flow of $f$ starting at any point in $M$ exists for all $t\geq 0$ and converges to the critical point of $f$ as $t\to+\infty$.
\item[$(ii)$] $M$ is diffeomorphic to $\mathbb R^d$.
\item[$(iii)$] For any $c>\inf f$, the set $M_c=\big\{x\in M: f(x)<c\big\}$ is diffeomorphic to $\mathbb R^d$, and the set $L_c=\big\{x\in M: f(x)=c\big\}$ is homeomorphic to $S^{d-1}$.
\end{itemize}
\end{theorem}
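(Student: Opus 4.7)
My plan is to treat part~(i) directly by gradient-flow analysis, use the flow to produce a cylindrical decomposition of $M\setminus\{p\}$ from which the homeomorphism type of $L_c$ follows, and then build the global diffeomorphism of part~(ii) by coupling a chart at $p$ with flow coordinates outside, following the general strategy of Wilson~\cite{Wilson-1967}. The starting observation is that since $f$ is proper and bounded below, it attains its infimum at a critical point, which by uniqueness must be $p$; so $\inf f=f(p)$ and $p$ is the global minimum. For part~(i), fix a Riemannian metric and let $\phi_t$ denote the negative gradient flow. The identity $\frac{d}{dt}f(\phi_t(x))=-|\nabla f|^2\le 0$ confines the forward orbit of $x$ to the compact set $f^{-1}([f(p),f(x)])$, so standard ODE theory gives forward completeness. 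The $\omega$-limit set $\omega(x)$ is nonempty, compact, flow-invariant, and contained in a single level set; flow-invariance forces $|\nabla f|\equiv 0$ on it, whence $\omega(x)=\{p\}$ by uniqueness of the critical point, and therefore $\phi_t(x)\to p$.

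For the cylindrical picture behind~(iii), fix any $c_0>f(p)$; since $c_0$ is a regular value, $L_{c_0}$ is a compact smooth $(d-1)$-manifold. The rescaled vector field $X=-\nabla f/|\nabla f|^2$ on $M\setminus\{p\}$ satisfies $Xf=-1$, and its trajectories stay in compact sublevel strata over any bounded $f$-interval; combined with part~(i), they sweep out $M\setminus\{p\}$ bijectively. This produces a smooth diffeomorphism
\[
\Phi\colon L_{c_0}\times(f(p),\infty)\longrightarrow M\setminus\{p\},\qquad (y,s)\longmapsto\phi_{c_0-s}^X(y),
\]
with $f\circ\Phi(y,s)=s$. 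Hence each $L_c$ with $c>f(p)$ is diffeomorphic to $L_{c_0}$, and $M_c\setminus\{p\}\cong L_{c_0}\times(f(p),c)$.

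For part~(ii) and the sphere statement in~(iii), I need to adjoin $p$ and identify the result smoothly with $\mathbb R^d$. Pick a smooth chart $\psi\colon U\to\mathbb R^d$ with $\psi(p)=0$; using that $M_c$ collapses in the Hausdorff sense to $\{p\}$ as $c\to f(p)^+$ (a consequence of properness together with uniqueness of $p$ as the minimum), choose $c_1>f(p)$ with $\overline{M_{c_1}}\subset U$. I would then stitch $\psi|_{\overline{M_{c_1}}}$ to the flow coordinates supplied by $\Phi$ on the complement, arranging via a smooth isotopy along $X$ that $\psi(L_{c_1})$ is transported to a standard round sphere in $\mathbb R^d$ before the pieces are glued. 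This yields a global diffeomorphism $\Psi\colon M\to\mathbb R^d$; applying the same construction to the open submanifold $M_c$ (on which $f|_{M_c}$ is again a Lyapunov--Reeb function) gives $M_c\cong\mathbb R^d$, and $L_c\cong S^{d-1}$ as a topological sphere arises as the boundary at infinity of the standard ball $M_c$.

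The main obstacle lies in the gluing step of part~(ii): viewing $\psi(\overline{M_{c_1}})$ as a smoothly embedded ball in $\mathbb R^d$ naively runs into the smooth Schoenflies problem, which is delicate already in dimension four. The resolution, following Wilson's argument, is to build $\Psi$ from the flow itself rather than seek an ambient topological embedding: smoothness on $M\setminus\{p\}$ is automatic from the smoothness of $\phi^X$, while smoothness at $p$ is secured by taking $c_1$ arbitrarily close to $f(p)$ and letting the chart $\psi$ supply the germ at the critical point, so that the possibly exotic global smoothing on $L_{c_1}$ is absorbed into the freely-chosen chart data.
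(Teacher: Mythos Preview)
Your treatment of part~(i) and the cylindrical decomposition $M\setminus\{p\}\cong L_{c_0}\times(f(p),\infty)$ is sound and matches the paper. The genuine gap is in part~(ii), in exactly the place you flagged but did not resolve. To glue the chart $\psi$ at $p$ with the flow coordinates you need, at minimum, a diffeomorphism from $\psi(L_{c_1})\subset\mathbb R^d$ to a round sphere---but you do not yet know that $L_{c_1}$ is even diffeomorphic to $S^{d-1}$ (this is not part of the hypotheses, and the flow only tells you all $L_c$ are mutually diffeomorphic, not what they are). Your final paragraph suggests the obstruction dissolves by ``taking $c_1$ arbitrarily close to $f(p)$'' and letting the chart ``absorb'' the exotic structure, but this does nothing: the diffeomorphism type of $L_{c_1}$ is independent of $c_1$, and a coordinate chart at $p$ carries no information about level sets beyond the fact that they sit in $\mathbb R^d$. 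Wilson's argument, which you invoke, assumes from the outset that $M$ is an open subset of $\mathbb R^d$, so the chart-gluing problem never arises there; that assumption is precisely what is missing here.

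The paper sidesteps the gluing entirely. For~(ii) it invokes the Brown--Stallings lemma: to show $M\cong\mathbb R^d$ it suffices to show every compact $K\subset M$ lies in an open set diffeomorphic to $\mathbb R^d$. This is achieved by flowing $K$ forward into a small coordinate ball $B_q$ around the critical point (your part~(i) guarantees this happens in finite time), then cutting off the gradient field to make the flow a global diffeomorphism and pulling $B_q$ back. No Schoenflies-type statement is needed. For the sphere claim in~(iii), your argument that $L_c$ is the ``boundary at infinity'' of $M_c\cong\mathbb R^d$ is not a proof; the paper instead uses the cylinder $L_c\times(-\infty,c-f(p))\cong M\setminus\{p\}\cong\mathbb R^d\setminus\{0\}$ to deduce that $L_c$ is \emph{homotopy equivalent} to $S^{d-1}$, and then appeals to the Generalized Poincar\'e Conjecture (Smale, Freedman, Perelman) to upgrade this to a homeomorphism. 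Note in particular that the paper only claims $L_c$ is homeomorphic, not diffeomorphic, to $S^{d-1}$.
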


\begin{remark}
The definition of L-R functions is motivated by the two relations: $(1)$ Up to a constant, an L-R function serves as a Lyapunov function for its negative gradient flow (see Remark~\ref{R-2-4}); $(2)$ As in the Reeb Sphere Theorem~\cite{Milnor-1964}, the critical point of an L-R function is allowed to be degenerate.
\end{remark}

To prove Theorems~\ref{T-1-3} and~\ref{T-1-4}, it suffices to construct L-R functions on the relevant spaces. We first introduce some concepts motivated by the work of Cantarella, Demaine, Iben and O'Brien~\cite{Cantarella-2004}.

\begin{definition}\label{D-1-9}
For $U=\mathcal M(R_m,\ell)$ or $U=\mathcal M^{+}(C_m,\ell)$,  a proper smooth map $\Phi:U\to[0,+\infty)$ is called a \textbf{strain energy} on $U$ if it satisfies
\[
\frac{\mathrm{d}\Phi(p_t)}{\mathrm{d}t}<0
\]
for every expansive motion $\{p_t\}_{-\varepsilon<t<\varepsilon}$ ($\varepsilon>0$) in $U$ and every $t\in(-\varepsilon,\varepsilon)$.
\end{definition}

Here a smooth family $\{p_t\}_{-\varepsilon<t<\varepsilon}$ ($\varepsilon>0$) of linkages in $U$ is called an \textbf{expansive motion} in $U$  if for all $t\in (-\varepsilon,\varepsilon)$ and
all pairs of vertices $v_i,v_j\in V$,
\[
\frac{\mathrm{d}}{\mathrm{d}t}
~\| p_{t}(v_i)-p_{t}(v_j)\|\geq 0,
\]
with at least one of the inequalities being strict.

\begin{remark}\label{R-1-10}
Let $m\geq 3$. It is easy to see that the function
\[
\Phi(p)=\sum_{\substack{[v_i,v_j]\in E \\ v_k\in V\setminus\{v_i,v_j\}}}\frac{1}
{\left(\|p(v_i)-p(v_k)\|+\|p(v_j)-p(v_k)\|-\|p(v_i)-p(v_j)\|\right)^2}
\]
introduced by Cantarella et al.~\cite{Cantarella-2004} is a strain energy on
$\mathcal M(R_m,\ell)$ (resp. $\mathcal M^{+}(C_m,\ell)$).
\end{remark}

We find that strain energies on $\mathcal M(R_m,\ell)$ serve as L-R functions.

\begin{theorem}\label{T-1-11}
Let $m\geq 2$ and let $\ell:E\to\mathbb R_{+}$ be an arbitrary assignment. Then every strain energy on $\mathcal M(R_m,\ell)$ is a Lyapunov-Reeb function on
$\mathcal M(R_m,\ell)$.
\end{theorem}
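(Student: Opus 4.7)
The plan is to verify the three defining properties of a Lyapunov-Reeb function. Properness and lower-boundedness are built into Definition~\ref{D-1-9}: a strain energy $\Phi$ is proper by hypothesis and takes values in $[0,+\infty)$. Hence the entire content of the theorem lies in showing that $\Phi$ admits a unique critical point.

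The case $m=2$ is trivial, since $\mathcal{M}(R_2,\ell)$ is a single point. For $m\geq 3$, denote by $p^{\ast}\in\mathcal{M}(R_m,\ell)$ the straight configuration, in which $v_1,\dots,v_m$ lie consecutively on a line; this is self-avoiding and unique up to orientation-preserving isometry. I claim that $p^{\ast}$ is the unique critical point of $\Phi$.

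First I would show that no $p\neq p^{\ast}$ is critical. Fixing such a $p$, the Carpenter's Rule Theorem of Connelly, Demaine and Rote~\cite{Connelly-2003} (or Streinu's pseudo-triangulation mechanism~\cite{Streinu-2005}) produces an infinitesimal expansive direction at $p$ that integrates to a smooth expansive motion $\{p_t\}_{|t|<\varepsilon}$ with $p_0=p$. The strain energy inequality then forces the derivative of $\Phi(p_t)$ at $t=0$ to be strictly negative, whence $d\Phi(p)\neq 0$. Next, since $\Phi$ is smooth, proper, and bounded below, it attains its infimum at some $p_0\in\mathcal{M}(R_m,\ell)$, where necessarily $d\Phi(p_0)=0$; by the previous step we must have $p_0=p^{\ast}$. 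This identifies $p^{\ast}$ as a critical point and, together with the previous step, shows it is the only one.

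I expect the main obstacle to be the clean extraction of a \emph{smooth} local expansive motion at a fixed non-straight $p$ from the Carpenter's Rule framework, whose standard statement supplies only a piecewise smooth expansive motion on a global time interval. This should be resolved by taking the infinitesimal expansive vector already constructed in their proof at the specific point $p$ and following its flow for short time, which is guaranteed to be smooth once one is away from the degenerate straight configuration.
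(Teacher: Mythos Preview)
Your proposal is correct and matches the paper's proof essentially line for line: both observe that properness and lower-boundedness are built into Definition~\ref{D-1-9}, use the existence of a minimum to exhibit a critical point, and invoke the Connelly--Demaine--Rote expansive-motion theorem (stated in the paper as Theorem~\ref{T-3-2}) to show that any non-straight linkage admits a direction along which $\Phi$ strictly decreases, hence cannot be critical. Your concern about smoothness is already resolved by the paper's formulation of Theorem~\ref{T-3-2}, which delivers a smooth expansive motion $\{p_t\}_{-\varepsilon<t<\varepsilon}$ through any non-straight $p$; no additional work is needed there.
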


Next we construct L-R functions on $\mathcal M^{+}(C_m,\ell)$. To this end, we introduce two functions on $\mathcal M^{+}(C_m)$ (and hence on $\mathcal M^{+}(C_m,\ell)$).
Recall that each $p\in \mathcal M^{+}(C_m)$ gives a positively oriented simple polygon $p(C_m)$. Let $A(p)$ be the area enclosed by $p(C_m)$. This gives rise to a function
\[
A: \mathcal M^{+}(C_m)\to \mathbb R_{+}.
\]
Following Shimamoto and Wootters~\cite{Shimamoto-2014}, for $i=1,2,\cdots,m$, we set
\[
w_i(p)=
\begin{cases}
\begin{aligned}
&\exp\left(\frac{1}{\pi-\alpha_i(p)}\right), &&\alpha_i(p)>\pi,\\
&0, &&\alpha_i(p)\leq\pi,
\end{aligned}
\end{cases}
\]
where $\alpha_i(p)$ denotes the interior angle of $p(C_m)$ at vertex $p(v_i)$. We then define a function $w: \mathcal M^{+}(C_m)\to [0,1)$ as
\[
w=\frac{1}{m}\left(\sum\nolimits_{i=1}^m w_i\right).
\]
Note that $w(p)=0$ if and only if $p\in \mathcal M^{+}(C_m)$ corresponds to a convex polygon. We acquire L-R functions on $\mathcal M^{+}(C_m,\ell)$ as follows.

\begin{theorem}\label{T-1-12}
Let $m\geq 3$ and let $\ell:E\to\mathbb R_{+}$ satisfy condition~$\mathrm{\mathbf{(c1)}}$. Suppose $\Phi:\mathcal M^{+}(C_m,\ell)\to [0,+\infty)$ is a strain energy. Then
\[
f=\frac{1}{A}+w\Phi
\]
is a Lyapunov-Reeb function on $\mathcal M^{+}(C_m,\ell)$.
\end{theorem}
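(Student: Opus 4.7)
The plan is to verify in turn the four defining properties of a Lyapunov--Reeb function for $f=1/A+w\Phi$: smoothness, lower-boundedness, properness, and existence of a unique critical point. The first two are immediate: on $\cM^{+}(C_m,\ell)$ every simple polygon encloses positive area so $1/A$ is smooth, each $w_i$ is $C^\infty$ because $\exp(1/(\pi-\alpha))$ is flat at $\alpha=\pi$, and $\Phi$ is smooth by hypothesis; moreover $f>0$ since $1/A>0$ and $w\Phi\geq 0$.

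The heart of the argument, and the step I expect to be the main obstacle, is properness. Given $\{p_n\}$ with $f(p_n)\leq C$, after normalizing by orientation-preserving isometries the polygons lie in a common bounded region (edge lengths being fixed), so I may pass to a subsequential limit $p_\infty\colon C_m\to\mathbb R^2$ that has the prescribed edge lengths and satisfies $A(p_\infty)\geq 1/C>0$. I claim $p_\infty\in\cM^{+}(C_m,\ell)$. Otherwise $p_\infty$ fails to be an embedding, and a short case analysis (vertex-vertex collision, vertex-on-edge, edge-edge overlap) reduces every failure mode to the existence of a vertex $v_k$ in the interior of a non-incident edge $[v_i,v_{i+1}]$ of $p_\infty$. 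Then the corresponding summand of $\Phi$ diverges, so $\Phi(p_n)\to+\infty$; boundedness of $w(p_n)\Phi(p_n)$ forces $w(p_n)\to 0$, meaning every interior angle of $p_\infty$ lies in $(0,\pi]$. The crucial geometric observation is that no weakly convex simple polygon with positive area can have a vertex in the interior of a non-incident edge: tracing $p_\infty$ from $v_{i+1}$ onward, reaching $v_k$ on the segment $[v_i,v_{i+1}]$ would require the path to ``turn back,'' creating a reflex vertex along the way and contradicting weak convexity, unless the polygon collapses onto a line and $A(p_\infty)=0$. Either way one reaches a contradiction, so $p_\infty\in\cM^{+}(C_m,\ell)$.

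For the unique critical point, split $\cM^{+}(C_m,\ell)$ along the convex locus $U_c=\{p:w(p)=0\}$. The flatness of $w_i$ at $\alpha=\pi$ implies that $w$ and all its derivatives vanish on $U_c$, so on $U_c$ critical points of $f$ coincide with those of $1/A$, and hence with those of $A$. By the classical theorem that the area of a convex polygon with prescribed edge lengths has a unique critical point, namely the cyclic polygon inscribed in a circle, this yields exactly one critical point on $U_c$, which is strictly convex. On the complement, where some $\alpha_k>\pi$, I invoke the Connelly--Demaine--Rote expansive motion: a smooth edge-length-preserving one-parameter family $\{p_t\}$ along which every pairwise distance is non-decreasing (some strictly) and the enclosed area strictly increases. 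Expansiveness of the diagonal $\|p_t(v_{k-1})-p_t(v_{k+1})\|$ across a reflex vertex $v_k$, combined with the law of cosines and the monotonicity of $\cos$ on $(\pi,2\pi)$, forces every reflex angle to be non-increasing, so $w$ is non-increasing along $\{p_t\}$; meanwhile $1/A$ and $\Phi$ both strictly decrease along $\{p_t\}$ (the latter by Definition~\ref{D-1-9}). Hence $df(\dot p_0)<0$, no point outside $U_c$ is critical, and the cyclic polygon is the unique critical point of $f$, completing the verification that $f$ is a Lyapunov--Reeb function.
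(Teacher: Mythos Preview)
Your argument follows the paper's approach closely: properness by contradiction (if a subsequence escapes then $\Phi\to\infty$, forcing $w\to 0$, so the limit is weakly convex with positive area and hence embedded), and uniqueness of the critical point via the convex/non-convex split, using the cocircular maximizer of $A$ on the convex locus and the Connelly--Demaine--Rote expansive motion on its complement. Two small fixes are needed: in the properness step, ``the corresponding summand of $\Phi$ diverges'' presumes the explicit energy of Remark~\ref{R-1-10} rather than an arbitrary strain energy---replace this by the properness built into Definition~\ref{D-1-9} (if $p_n\to p_\infty\notin\mathcal M^{+}(C_m,\ell)$ then $\{p_n\}$ leaves every compact set, so $\Phi(p_n)$ is unbounded); and along an expansive motion the enclosed area is only known to be non-decreasing (Lemma~\ref{L-4-9}), not strictly increasing, but this is harmless since $w(p_0)>0$ and $\tfrac{d\Phi}{dt}\big|_{t=0}<0$ already give $\tfrac{df}{dt}\big|_{t=0}<0$.
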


The proofs of the above two theorems rely on a fundamental result established by Connelly, Demaine and Rote~\cite{Connelly-2003} (see also the work of Rote, Santos and Streinu~\cite{Rote-2003}), which states that every non-straight arm linkage and every non-convex cycle linkage admits expansive motions. Specifically, this property implies that strain energies have no critical points in certain subsets of moduli spaces.

By incorporating a projection method, we further construct L-R functions on $\mathcal M(R_m)$ and $\mathcal M^{+}(C_m)$ (see Theorems~\ref{T-3-5} and~\ref{T-4-16}). Combining these derivations with Theorem~\ref{T-1-7}, we then establish Theorems~\ref{T-1-3} and~\ref{T-1-4}. Notably, this framework yields a number of additional corollaries. Here we present two results which describe foliation structures of the moduli spaces via level sets of L-R functions.

\begin{corollary}\label{C-1-13}
Let $f$ be a Lyapunov-Reeb function on $\mathcal M(R_m,\ell)$ with $m\geq 3$. Given $c>\inf f$, $f^{-1}(c)$ is homeomorphic to $S^{m-3}$.
\end{corollary}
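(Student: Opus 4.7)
The plan is to deduce this directly from Theorem~\ref{T-1-7}(iii) applied to the manifold $M=\mathcal M(R_m,\ell)$ together with the given L-R function $f$. Two ingredients are needed.

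First, I would invoke the smooth structure of the moduli space: as stated in the introduction and carried out in Lemma~\ref{L-3-1}, $\mathcal M(R_m,\ell)$ is a smooth manifold, and its dimension is $m-2$ (this is read off most cleanly from the diffeomorphism $\mathcal M(R_m,\ell)\cong \mathbb R^{m-2}$ of Theorem~\ref{T-1-3}(i), but it is also transparent from a direct parameter count: an arm linkage with $m$ vertices has $m-1$ fixed edge lengths and $m-1$ free interior angles, and one further subtracts the action of orientation-preserving planar isometries, leaving $m-2$ degrees of freedom). Under the standing assumption $m\geq 3$ one has $d:=m-2\geq 1$, so the dimensional hypothesis of Theorem~\ref{T-1-7} is satisfied.

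Second, since $f$ is by hypothesis a Lyapunov-Reeb function on $M$, Theorem~\ref{T-1-7}(iii) applies verbatim and asserts that for every $c>\inf f$ the level set $L_c=f^{-1}(c)$ is homeomorphic to $S^{d-1}$. Substituting $d=m-2$ yields the claim $f^{-1}(c)\cong S^{m-3}$.

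There is no genuine obstacle beyond this dimensional bookkeeping; all of the analytic content — the existence and convergence of the negative gradient flow, and the resulting sphere structure of regular level sets — has already been absorbed into Theorem~\ref{T-1-7}. It is worth flagging only the boundary case $m=3$, where $d=1$ and the conclusion reads $f^{-1}(c)\cong S^0$, a pair of points, consistent with the fact that an arm linkage on three vertices with prescribed side lengths is parameterized by a single interior angle (up to orientation-preserving isometry), and a Lyapunov-Reeb function on the resulting one-dimensional moduli space must separate it into two rays emanating from its unique critical point.
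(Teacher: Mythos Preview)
Your proposal is correct and matches the paper's approach exactly: the paper's proof of Corollary~\ref{C-1-13} is the single sentence ``It is an immediate consequence of Theorem~\ref{T-1-7},'' and you have simply unpacked that sentence by citing Lemma~\ref{L-3-1} for the dimension $d=m-2$ and then invoking part~$(iii)$. One small slip: in your parenthetical parameter count you write ``$m-1$ free interior angles,'' but an arm on $m$ vertices has only $m-2$ such angles (the paper's coordinates are $\theta_2,\dots,\theta_{m-1}$); this does not affect the argument since you correctly rely on Lemma~\ref{L-3-1} rather than the informal count.
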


\begin{corollary}\label{C-1-14}
Suppose $\ell:E\to\mathbb R_{+}$ satisfies condition~$\mathrm{\mathbf{(c1)}}$ and
suppose $f$ is a Lyapunov-Reeb function on $\mathcal M^{+}(C_m,\ell)$ with $m\geq4$. Given $c>\inf f$, $f^{-1}(c)$ is homeomorphic to $S^{m-4}$.
\end{corollary}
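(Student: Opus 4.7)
The plan is to apply part $(iii)$ of Theorem~\ref{T-1-7} directly to the manifold $M = \mathcal M^{+}(C_m,\ell)$, taking the given $f$ as the Lyapunov--Reeb function. For this I first need to confirm the smooth structure and the dimension of $M$. By Lemma~\ref{L-4-1} the space $\mathcal M^{+}(C_m,\ell)$ is a smooth manifold, and Theorem~\ref{T-1-4}$(i)$, proved independently before this corollary, identifies its dimension as $m-3$; equivalently, one can read the dimension off from a standard parameterization in which the $2m$ vertex coordinates are reduced by the three real dimensions of orientation-preserving isometries of $\mathbb{R}^{2}$ and by the $m$ edge-length constraints.

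With $d = m-3$, the hypothesis $m \ge 4$ in the statement of the corollary corresponds exactly to the condition $d \ge 1$ required by Theorem~\ref{T-1-7}. Since $f$ is assumed to be an L-R function, all hypotheses of Theorem~\ref{T-1-7} are satisfied, and part $(iii)$ applied to $M$ yields that $L_c = f^{-1}(c)$ is homeomorphic to $S^{d-1} = S^{m-4}$ for every $c > \inf f$. This is the conclusion of the corollary.

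The substantive content is therefore already packaged inside Theorem~\ref{T-1-7}: a Reeb-style argument using the negative gradient flow of $f$ (after selecting an auxiliary Riemannian metric on $M$, whose particular choice does not affect the topological type of the level sets) exhibits each $L_c$ as a topological $(d-1)$-sphere. In the present corollary no new obstacle arises, and the only sanity check worth recording is the boundary case $m = 4$: then $\mathcal M^{+}(C_{4},\ell)$ is a $1$-manifold diffeomorphic to $\mathbb{R}$, and the asserted homeomorphism $L_c \cong S^{0}$ simply records that any level set above the infimum of a proper, lower-bounded smooth function on $\mathbb{R}$ with a unique critical point consists of exactly two points.
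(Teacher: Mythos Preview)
Your proposal is correct and matches the paper's own proof, which simply invokes part $(iii)$ of Theorem~\ref{T-1-7} after noting (via Lemma~\ref{L-4-1}) that $\mathcal M^{+}(C_m,\ell)$ is a smooth manifold of dimension $m-3$. Your additional remarks on the dimension count and the $m=4$ sanity check are fine but not needed.
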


\begin{remark}
Corollaries~\ref{C-1-13} and~\ref{C-1-14} settle the following strengthened version of Carpenter's Rule Problem: \emph{Under what conditions can a polygonal linkage be continuously deformed to any other linkage in the same level set of an L-R function, such that during the deformation process no self-intersection is created and the L-R function remains constant?}
\end{remark}

Other applications of L-R functions include devising algorithms for the original Carpenter's Rule Problem through associated negative gradient flows. In fact, L-R functions shed light on a broader question: \emph{How can one generate an animation from two images?} More precisely, we will introduce a renormalization algorithm for the Linkage (Configuration) Refolding Problem~\cite{Cantarella-2004,Demaine-2009}: \emph{Given two polygonal linkages (configurations), construct an ``optimal'' motion connecting them.}
Further details can be found in~\S\ref{S-5}.

The paper is organized as follows: In~\S\ref{S-2} we develop Morse theory for open manifolds and prove Theorem~\ref{T-1-7}. In~\S\ref{S-3} we construct L-R functions on the moduli spaces of arm linkages and configurations, and establish Theorems~\ref{T-1-11},~\ref{T-1-3} and Corollary~\ref{C-1-13}. In~\S\ref{S-4} we study the moduli spaces of cycle linkages and configurations, deducing
Theorems~\ref{T-1-12},~\ref{T-1-4} and Corollary~\ref{C-1-14}. In~\S\ref{S-5} we present algorithms addressing the Carpenter's Rule Problem and the Linkage (Configuration) Refolding Problem. In~\S\ref{S-6} we propose several problems for further investigation.

Throughout this paper, a manifold is assumed to be a second-countable Hausdorff space that is locally homeomorphic to Euclidean space, following the standard terminology.

\section{Morse theory in the setting of open manifolds}\label{S-2}

This section is dedicated to showing Theorem~\ref{T-1-7}. Guided by the spirit of  the proof of the Reeb Sphere Theorem~\cite{Reeb-1946,Milnor-1964}, we employ negative gradient flows along with certain modifications. In particular, we draw on Wilson's insight~\cite{Wilson-1967} that a modified negative gradient flow can induce a homotopy equivalence from the given level set to a specific sphere. Additionally, we shall make use of a powerful tool\textemdash the Generalized Poincar\'{e} Conjecture (see, e.g.,~\cite{Smale-1961,Freedman-1982,Perelman-2002,Perelman-2003.3,Perelman-2003.7}).

\subsection{Negative gradient flows}

First we introduce some concepts and results on negative gradient flows. For an exposition, see the textbook by Lee~\cite{Lee-2018}.

Let $M$ be a smooth manifold. A flow on $M$ is a smooth map $\varphi:\mathcal D\to M$ with the following properties:
\begin{itemize}
\item[$(1)$]$\mathcal D$ is an open subset of $\mathbb R\times M$ such that $\{0\}\times M\subset \mathcal D$;
\item[$(2)$] $\varphi(0,p)=p$;
\item[$(3)$] If $(s,p)\in\mathcal D$ and $\big(t,\varphi(s,p)\big)\in\mathcal D$ such that $(t+s,p)\in\mathcal D$, then
\[
\varphi\big(t,\varphi(s,p)\big)=\varphi(t+s,p).
\]
\end{itemize}
A flow $\varphi:\mathcal D\to M$ is said to be \textbf{forward-complete} (resp. \textbf{backward-complete}) if
\[
[0,+\infty)\times M\subset \mathcal D\quad \left(\text{resp}.\;\;(-\infty,0]\times M\subset \mathcal D\right),
\]
and it is \textbf{complete} if $\mathcal D=\mathbb R\times M$. For a complete flow $\varphi:\mathbb R\times M\to M$, note that the flow map $\varphi(t,\cdot):M\to M$ is a diffeomorphism for every $t\in \mathbb R$.

A flow $\varphi:\mathcal D\to M$ is said to be generated by a smooth vector field $X$ on $M$ if it satisfies
\[
\frac{\mathrm{d}\varphi(t,p)}{\mathrm{d}t}
=X\big(\varphi(t,p)\big).
\]

We recall the Fundamental Theorem on Flows (see, e.g.,~\cite[Theorem 9.12]{Lee-2018}).

\begin{theorem}\label{T-2-1}
Every smooth vector field $X$ on $M$ generates a unique maximal flow on $M$. Moreover, the resulting flow is complete if $X$ is compactly supported.
\end{theorem}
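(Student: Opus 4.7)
The plan is to prove this classical result in three stages: local existence, globalization into a maximal integral curve through each point, and assembling these into a smooth flow. In any coordinate chart, $X$ becomes a smooth map $U \subset \mathbb{R}^d \to \mathbb{R}^d$, so the Picard--Lindel\"{o}f theorem yields, for each $p \in M$, an open interval $I \ni 0$ and a unique smooth integral curve $\gamma_p : I \to M$ with $\gamma_p(0) = p$ and $\gamma_p'(t) = X(\gamma_p(t))$. By the uniqueness clause, any two integral curves through $p$ coincide on the intersection of their domains, so their union defines a maximal integral curve $\gamma_p : J_p \to M$ where $J_p \subset \mathbb{R}$ is an open interval containing $0$.

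Next I would set $\mathcal D = \{(t,p) \in \mathbb R \times M : t \in J_p\}$ and $\varphi(t,p) = \gamma_p(t)$, and verify the three defining properties of a flow. Property $(2)$ holds by definition, and property $(3)$ follows from uniqueness, since both $t \mapsto \varphi(t+s,p)$ and $t \mapsto \varphi(t,\varphi(s,p))$ are integral curves starting at $\varphi(s,p)$. Openness of $\mathcal D$ and smoothness of $\varphi$ reduce, via a standard covering and translation argument along compact pieces of a given integral curve, to the smooth dependence of ODE solutions on initial conditions. Maximality of the flow is automatic from the maximality of each $\gamma_p$.

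For the completeness claim when $X$ has compact support $K$, the main ingredient is the Escape Lemma: if $J_p = (a,b)$ with $b < +\infty$, then for every compact $L \subset M$ there exists $t < b$ close to $b$ with $\gamma_p(t) \notin L$. Apply this with $L = K \cup \{p\}$. If $p \notin K$, then $X(p) = 0$ and $\gamma_p \equiv p$ is defined on all of $\mathbb R$. If $p \in K$, the Escape Lemma forces some $t_0 < b$ with $\gamma_p(t_0) \notin K$, hence $X(\gamma_p(t_0)) = 0$; then $\gamma_p$ can be extended by the constant value $\gamma_p(t_0)$ on $[t_0, +\infty)$, contradicting maximality unless $b = +\infty$. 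The same argument on the left yields $a = -\infty$, so $\mathcal D = \mathbb R \times M$.

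I do not expect a serious obstacle here: the result is standard once the ODE theory is in hand, and the two delicate bookkeeping steps are (a) promoting openness of $\mathcal D$ and smoothness of $\varphi$ from local existence to the global patching, and (b) invoking the Escape Lemma correctly for the completeness statement. Both are routine in the framework of smooth manifolds, which is why the authors simply cite~\cite{Lee-2018}.
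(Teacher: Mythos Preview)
The paper does not prove Theorem~\ref{T-2-1} at all: it is stated as the Fundamental Theorem on Flows and attributed to~\cite[Theorem 9.12]{Lee-2018}, with no argument given. Your sketch is the standard proof (essentially the one in Lee's book), and it is correct; you yourself note at the end that the authors merely cite the result.
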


We mention that a \textbf{maximal flow} means it admits no extension to a flow on a larger domain, and a vector field $X$ is \textbf{compactly supported} if its support,  given by
$\operatorname{supp} X:=\overline{\left\{p\in M: X(p)\neq 0\right\}}$, is compact.

Suppose $M$ is endowed with a Riemannian metric. The gradient of a smooth function $f$ on $M$ is the vector field $\nabla f$ such that
\[
\langle \nabla f, X\rangle= X(f)
\]
holds for all smooth vector fields $X$ on $M$. To develop Morse theory,  we need to study the negative gradient flow of $f$, i.e., the maximal flow that satisfies
\[
\frac{\mathrm{d}\varphi(t,p)}{\mathrm{d}t}
=-\nabla f\big(\varphi(t,p)\big).
\]

For open manifolds, the negative gradient may not be compactly supported, and the induced flow may fail to be complete.

\begin{example}
Define a function on $\mathbb R^2$ by $f(x,y)=x^2+y^4$. The negative gradient flow of $f$ is expressed as
\[
\varphi\big(t,(x,y)\big)=\left(e^{-2t}x,\frac{y}{(1+8ty^2)^{\frac{1}{2}}}\right).
\]
This flow is not complete. More specifically, the flow map $\varphi(t,\cdot):\mathbb R^2\to \mathbb R^2$ is non-surjective for $t>0$ and not well defined for $t<0$.
\end{example}

Fortunately, forward-completeness holds for the negative gradient flows of all proper, smooth, and lower-bounded functions.

\begin{lemma}\label{L-2-2}
Let $M$ be a smooth Riemannian manifold, and let $f:M\to\mathbb R$ be a proper, smooth, and lower-bounded function. Then the negative gradient flow of $f$ is forward-complete.
\end{lemma}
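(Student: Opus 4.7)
The plan is to combine two standard facts: $f$ is non-increasing along its own negative gradient flow, and properness means bounded sublevel sets are compact. Together with the Escape Lemma from smooth manifold theory, these rule out any finite-time escape in the forward direction.

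First I would fix an arbitrary starting point $p\in M$ and let $\varphi(\cdot,p):(a_p,b_p)\to M$ denote the maximal integral curve of $-\nabla f$ with $\varphi(0,p)=p$, so the goal is to prove $b_p=+\infty$. A one-line computation gives
\[
\frac{\mathrm{d}}{\mathrm{d}t}f\big(\varphi(t,p)\big)=\big\langle \nabla f,-\nabla f\big\rangle\big|_{\varphi(t,p)}=-\|\nabla f(\varphi(t,p))\|^2\leq 0,
\]
so $f\circ\varphi(\cdot,p)$ is non-increasing on $[0,b_p)$. Combined with the lower bound $f\geq\inf f$, this confines the forward trajectory to the sublevel set $K:=f^{-1}\big([\inf f,\,f(p)]\big)$.

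Since $f$ is proper, $K$ is compact. I would then invoke the Escape Lemma (e.g., Lee, \emph{Introduction to Smooth Manifolds}, Lemma 9.19), which states that if $b_p<+\infty$, then $\varphi(\cdot,p)$ must eventually leave every compact subset of $M$. This directly contradicts the inclusion $\varphi([0,b_p),p)\subset K$, forcing $b_p=+\infty$. As $p\in M$ was arbitrary, this yields forward-completeness of the maximal flow and hence proves the lemma.

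I do not expect a genuine obstacle here: the argument is essentially textbook, and the only hypotheses used are the three assumed properties of $f$ (properness for compactness of $K$, lower-boundedness to pin $f\circ\varphi$ in a bounded interval, and smoothness to make $-\nabla f$ a well-defined smooth vector field). It is worth contrasting this with the preceding example $f(x,y)=x^2+y^4$, where lower-boundedness does hold and indeed only \emph{forward} completeness is obtained; the backward direction genuinely fails, so the asymmetric conclusion of the lemma is sharp.
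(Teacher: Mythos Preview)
Your proof is correct and follows essentially the same approach as the paper's: both compute $\frac{\mathrm{d}}{\mathrm{d}t}f(\varphi(t,p))=-\|\nabla f\|^2\leq 0$ to trap the forward trajectory in the compact sublevel set $f^{-1}([\inf f,f(p)])$, then invoke the Escape Lemma (Lee, Lemma~9.19) to rule out a finite forward escape time. The only difference is the order in which the monotonicity and the Escape Lemma are introduced, which is immaterial.
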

\begin{proof}
In light of Theorem~\ref{T-2-1}, there exists a unique maximal flow $\varphi:\mathcal D\to M$ generated by $-\nabla f$. For each $p\in M$, let $J_p$ be the maximal interval such that
\[
J_p\times\{p\}\subset \mathcal D.
\]
It suffices to show $J_p$ has no upper bounds. Assume the contrary, then $\beta:=\sup J_p$ is finite. Define the integral curve starting at $p$ as
\[
\begin{aligned}
\gamma_p\;:\;[0,\beta)\;&\longrightarrow\; M\\
\;\;\;t\quad\;&\longmapsto\;\varphi(t,p).
\end{aligned}
\]
By the Escape Lemma~\cite[Lemma 9.19]{Lee-2018},  $\gamma_p\big([0,\beta)\big)$ cannot be contained in any compact subset of $M$.

On the other hand, a direct calculation gives
\[
\frac{\mathrm{d}f\big(\gamma_p(t)\big)}{\mathrm{d}t}
=-\| \nabla f \|^2\leq 0,
\]
which yields
\[
f\big(\gamma_p(t)\big)\leq f\big(\gamma_p(0)\big)=f(p).
\]
Recall that $f$ is lower-bounded. We obtain
\[
\gamma_p\big([0,\beta)\big)\subset f^{-1}\big([a,b]\big),
\]
where $a=\min f$ and $b=f(p)$. By the properness of $f$, the preimage $f^{-1}\big([a,b]\big)$ is compact, which contradicts the fact that $\gamma_p\big([0,\beta)\big)$ cannot be contained in any compact subset of $M$. We thus derive $\beta=+\infty$ and conclude the lemma.
\end{proof}

In addition, one asserts the convergence of the flow assuming further that $f$ possesses exactly one critical point.
\begin{theorem}\label{T-2-3}
Let $M$ be a smooth Riemannian manifold, and let
$f: M\to\mathbb R$ be a Lyapunov-Reeb function with $q$ being the critical point. Suppose $\varphi:\mathcal D\to M$ is the negative gradient flow of $f$. Then for any $p\in M$,
\[
\lim_{t\to+\infty}\varphi(t,p)=q.
\]
\end{theorem}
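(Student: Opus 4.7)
My plan is to use the standard Lyapunov/omega-limit set argument: properness together with the lower bound on $f$ will confine the forward orbit to a compact sublevel set, and uniqueness of the critical point will then force the omega-limit set to be exactly $\{q\}$.

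First I would fix $p\in M$ and consider the forward trajectory $\gamma_p(t):=\varphi(t,p)$, which is defined for all $t\geq 0$ by Lemma~\ref{L-2-2}. The computation already performed in the proof of that lemma gives $\tfrac{d}{dt}f(\gamma_p(t))=-\|\nabla f(\gamma_p(t))\|^2\leq 0$, so $t\mapsto f(\gamma_p(t))$ is non-increasing; since $f$ is bounded below, the finite limit $c:=\lim_{t\to+\infty}f(\gamma_p(t))$ exists, and the whole forward orbit is contained in the compact sublevel set $f^{-1}([c,f(p)])$.

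Next I would analyze the omega-limit set
\[
\omega(p):=\bigcap_{s\geq 0}\overline{\gamma_p([s,+\infty))},
\]
which is nonempty and compact as a nested intersection of nonempty compact sets, and on which $f\equiv c$ by continuity. The crux of the argument is to show that every point of $\omega(p)$ is a critical point of $f$. Suppose $q'\in\omega(p)$ with $\nabla f(q')\neq 0$. Then for small $\varepsilon>0$ the same monotonicity computation, applied to the integral curve $\varphi(\cdot,q')$, gives $f(\varphi(\varepsilon,q'))<f(q')=c$. Choosing times $t_n\to+\infty$ with $\gamma_p(t_n)\to q'$ and using continuity of the flow in its initial condition, I obtain $\gamma_p(t_n+\varepsilon)=\varphi(\varepsilon,\gamma_p(t_n))\to\varphi(\varepsilon,q')$, so $f(\gamma_p(t_n+\varepsilon))\to f(\varphi(\varepsilon,q'))<c$, contradicting $f(\gamma_p(t))\geq c$ for all $t\geq 0$.

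Since $q$ is by hypothesis the only critical point of $f$, it follows that $\omega(p)=\{q\}$. To promote this to $\gamma_p(t)\to q$, I would argue by contradiction: if the limit fails, there exist a neighborhood $U$ of $q$ and times $s_n\to+\infty$ with $\gamma_p(s_n)\notin U$; compactness of $f^{-1}([c,f(p)])$ then yields a subsequential limit $q''\in\omega(p)\setminus U$, contradicting $\omega(p)=\{q\}$. The main obstacle, and the reason all three hypotheses in the definition of a Lyapunov-Reeb function are needed, is confining the trajectory to a compact set so that $\omega(p)$ is nonempty and well-behaved; once this is secured the remaining steps are routine Lyapunov-stability arguments, and the uniqueness of the critical point is invoked only at the final step to collapse $\omega(p)$ to a single point.
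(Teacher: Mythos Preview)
Your proof is correct and follows the same overall Lyapunov strategy as the paper: confine the forward orbit to a compact sublevel set, show that accumulation points must be critical, invoke uniqueness of the critical point, and then upgrade to full convergence. The one place your argument and the paper's differ is in how you show that accumulation points are critical. You use the standard $\omega$-limit set machinery, exploiting continuity of the flow in the initial condition to derive $f(\gamma_p(t_n+\varepsilon))\to f(\varphi(\varepsilon,q'))<c$ and obtain a contradiction. The paper instead applies the Mean Value Theorem on each interval $[n,n+1]$ to extract times $t_n$ with $\|\nabla f(\gamma_p(t_n))\|^2=f(\gamma_p(n))-f(\gamma_p(n+1))\to 0$, then passes to a subsequential limit which is forced to be critical; it finishes by noting that $f(\gamma_p(t))\to f(q)=\min f$ and that any other subsequential limit $\tilde q\neq q$ would satisfy $f(\tilde q)>\min f$. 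Your version is the cleaner textbook LaSalle-type argument and makes the invariance of $\omega(p)$ do the work; the paper's version is slightly more elementary in that it avoids invoking continuous dependence on initial conditions, at the cost of the ad hoc MVT step. Both are equally valid here.
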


\begin{proof}
Consider the integral curve
\[
\begin{aligned}
\gamma_p\;:\;[0,+\infty)\;&\longrightarrow\; M\\
\;\;\;t\quad\;&\longmapsto\;\varphi(t,p).
\end{aligned}
\]
As in Lemma~\ref{L-2-2}, it is easy to see  $f\big(\gamma_p(t)\big)$ is a decreasing function of $t$. Since $f$ is lower-bounded,   $\lim_{t\to+\infty}f\big(\gamma_p(t)\big)$ exists. For each positive integer $n$, Lagrange's Mean Value Theorem implies there exists $t_n\in(n,n+1)$ such that
\[
f\big(\gamma_p(n+1)\big)-f\big(\gamma_p(n)\big)
=\frac{\mathrm{d}f\big(\gamma_p(t)\big)}{\mathrm{d}t}\bigg|_{t=t_n}
=-\|\, \nabla f\big(\gamma_p(t_n)\big)\,\|^2.
\]
Because $\lim_{t\to+\infty}f\big(\gamma_p(t)\big)$ exists, we have
\[
f\big(\gamma_p(n+1)\big)-f\big(\gamma_p(n)\big)\to 0
\]
as $n\to+\infty$. Consequently,
\[
\|\,\nabla f\big(\gamma_p(t_n)\big)\,\|^2\to 0.
\]
Note that $\big\{\gamma_p(t_n)\big\}$ lies in the compact set $f^{-1}\big([a,b]\big)$,
where
\[
a=\min f=f(q)\quad\text{and}\quad b=f(p).
\]
By passing to a subsequence if necessary, we may assume $\big\{\gamma_p(t_n)\big\}$ converges to some $q^\ast\in M$. Then $\nabla f (q^\ast)=0$, which implies $q^\ast$ is a critical point of $f$. Given that $f$ has only one critical point, we deduce $q^\ast=q$, meaning
\[
\gamma_p(t_n)\to q.
\]
As a result,
\[
\lim_{t\to+\infty}f\big(\gamma_p(t)\big)
=\lim_{n\to+\infty}f\big(\gamma_p(t_n)\big)=f(q)=\min f.
\]
We claim that $\gamma_{p}(t)$ converges to $q$. Otherwise, there exists a subsequence $\big\{\gamma_{p}(s_k)\big\}$ converging to some $\tilde{q}\neq q$. It follows that
\[
\lim_{k\to+\infty}f\big(\gamma_p(s_k)\big)=f\big(\tilde{q}\big)>\min f,
\]
contradicting our earlier conclusion. Hence
$\varphi(t,p)=\gamma_p(t)\to q$ as $t\to+\infty$.
\end{proof}

\begin{remark}\label{R-2-4}
Set $h=f-f(q)$. Then $h$ serves as a global Lyapunov function for its negative gradient flow. By virtue of the Barba\v{s}in-Krasovski\v{i} Theorem~\cite{Barba-1954},  $q$ is a globally asymptotically stable equilibrium point of the system, implying each point is attracted to $q$ by the flow. This offers an alternative approach to Theorem~\ref{T-2-3}.
\end{remark}

\subsection{Morse theory on open manifolds}

To streamline the arguments, we now restate property $(ii)$ of Theorem~\ref{T-1-7} as the following result. As mentioned before, this can be viewed as an analog of the Reeb Sphere Theorem~\cite{Reeb-1946,Milnor-1964} in the setting of open manifolds.

\begin{theorem}\label{T-2-5}
Suppose $M$ is a smooth manifold of dimension $d\geq1$ that admits a Lyapunov-Reeb function $f$. Then $M$ is diffeomorphic to $\mathbb R^{d}$.
\end{theorem}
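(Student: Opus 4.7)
The plan is to use the negative gradient flow of $f$ to build an explicit smooth diffeomorphism from $M$ to $\mathbb{R}^d$. Fix a Riemannian metric on $M$. By Lemma~\ref{L-2-2} the negative gradient flow $\varphi(t,p)$ is forward-complete, and by Theorem~\ref{T-2-3} every trajectory converges to the unique critical point $q$ as $t \to +\infty$. The reparametrization $H(p,s) = \varphi(s/(1-s),p)$ with $H(p,1)=q$ then gives a strong deformation retraction of $M$ onto $\{q\}$; in particular $M$ is contractible, and $M$ must be non-compact, else $f$ would attain a maximum at a second critical point.

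Next I would analyse the level sets. For $c > f(q)$, every such value is regular, so $L_c = f^{-1}(c)$ is a smoothly embedded $(d-1)$-submanifold, compact by properness of $f$, and $\overline{M_c} = f^{-1}([f(q),c])$ is a compact smooth contractible $d$-manifold with boundary $L_c$. Poincar\'{e}--Lefschetz duality forces $L_c$ to have the homology of $S^{d-1}$. Taking a chart $U \cong \mathbb{R}^d$ around $q$ and applying van Kampen to the cover $M = U \cup (M \setminus \{q\})$ yields $\pi_1(M \setminus \{q\}) = \pi_1(M) = 0$ for $d \geq 3$; since the rescaled flow (below) identifies $M \setminus \{q\}$ diffeomorphically with $L_c \times \mathbb{R}$, this gives $\pi_1(L_c) = 0$. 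The Generalized Poincar\'{e} Conjecture then identifies $L_c$ with $S^{d-1}$ topologically, while $d = 1,2$ are handled directly by the classification of compact contractible manifolds with boundary.

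To realise the full diffeomorphism I would work with the rescaled vector field $Y = -\nabla f / \|\nabla f\|^2$ on $M \setminus \{q\}$, so that $f$ decreases at unit speed along the flow $\psi$ of $Y$. A routine escape-lemma argument in the spirit of Lemma~\ref{L-2-2} shows that $\psi(t,p)$ is defined for all $t < f(p)-f(q)$, and the map $(p,s) \mapsto \psi(c-s,p)$ is a smooth diffeomorphism $L_c \times (f(q),\infty) \to M \setminus \{q\}$. This exhibits $M$ as a smooth ``infinite cone'' on $L_c$ with cone point $q$; combined with the sphere identification of $L_c$, one obtains a homeomorphism $M \cong \mathbb{R}^d$ that is smooth on the complement of $q$.

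The main obstacle will be upgrading this topological picture to a genuine smooth diffeomorphism. The Generalized Poincar\'{e} Conjecture provides only a homeomorphism $L_c \cong S^{d-1}$, and since $q$ may be a degenerate critical point there is no Morse lemma at hand to supply a smooth chart at $q$ compatible with the cone structure. For $d \neq 4$ the uniqueness of the smooth structure on $\mathbb{R}^d$ (Moise for $d \leq 3$, Stallings for $d \geq 5$) together with the smooth product structure on $M \setminus \{q\}$ should do the job. Dimension $d = 4$ is subtler, and I would aim to exhibit an exhaustion of $M$ by smoothly standard open disks, coming from the sublevel sets $M_{c_k}$, so that a monotone-union theorem in the spirit of M.~Brown rules out exotic smooth structures at the cone point.
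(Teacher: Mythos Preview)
Your approach differs substantially from the paper's and carries a genuine gap in dimension $4$. The paper never analyses level sets to prove Theorem~\ref{T-2-5}; instead it invokes the Brown--Stallings Lemma (Lemma~\ref{L-2-6}): it suffices that every compact $K \subset M$ lie in an open set diffeomorphic to $\mathbb{R}^d$. To produce such a set, the paper takes any coordinate ball $B_q$ around $q$ (automatically diffeomorphic to $\mathbb{R}^d$), multiplies $-\nabla f$ by a bump function $\eta_{a,b}\circ f$ to obtain a \emph{complete} flow $\varphi_\eta$, and uses Proposition~\ref{P-2-9} to find $T$ with $\varphi_\eta(T,K)\subset B_q$; then $W=\varphi_\eta(-T,B_q)$ is diffeomorphic to $B_q$ and contains $K$. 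This argument is uniform in $d$ and invokes neither the Generalized Poincar\'e Conjecture nor uniqueness of smooth structures on $\mathbb{R}^d$; the level-set analysis and the Poincar\'e Conjecture enter only later, in the proof of part~$(iii)$ of Theorem~\ref{T-1-7}.

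Your route through level sets does yield a homeomorphism $M\cong\mathbb{R}^d$, and for $d\neq 4$ the uniqueness of the smooth structure on $\mathbb{R}^d$ closes the argument. But your plan for $d=4$ is circular: you propose to exhibit the sublevel sets $M_{c_k}$ as smoothly standard, yet each $M_{c_k}$ is itself a smooth $4$-manifold admitting a Lyapunov--Reeb function (take $1/(c_k-f)$), so showing $M_{c_k}$ is diffeomorphic to $\mathbb{R}^4$ is the original problem again. Knowing only that $\overline{M_{c_k}}$ is a compact contractible smooth $4$-manifold with boundary $S^3$ does not help either---deducing $\overline{M_{c_k}}\cong D^4$ from that is equivalent to the (open) smooth $4$-dimensional Poincar\'e conjecture. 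The fix is precisely the paper's move: feed the monotone-union argument not the sublevel sets but the flow-images $\varphi_\eta(-T,B_q)$ of a fixed coordinate chart, which are diffeomorphic to $\mathbb{R}^d$ by construction.
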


Before giving the proof, we introduce two preliminary lemmas. The first one is due to Brown~\cite{Brown-1961} and Stallings~\cite{Stallings-1962}. The following version together with a proof can be seen in the lecture note of Milnor~\cite[p.~168]{Milnor-1964}.
\begin{lemma}[Brown-Stallings]\label{L-2-6}
Let $M$ be a smooth manifold such that every compact subset is contained in an open subset that is diffeomorphic to $\mathbb{R}^d$. Then $M$ is diffeomorphic to $\mathbb{R}^d$.
\end{lemma}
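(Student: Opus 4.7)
The plan is to first exhibit $M$ as a monotone union of open $d$-cells and then construct a global diffeomorphism to $\mathbb{R}^d$ as a limit of diffeomorphisms on the cells. Since $M$ is second countable, I would fix a compact exhaustion $K_1\subset K_2\subset\cdots$ with $K_i$ contained in the interior of $K_{i+1}$ and $\bigcup_i K_i=M$. Applying the hypothesis to $K_1$ yields an open $V_1\cong\mathbb{R}^d$ containing $K_1$; after choosing a diffeomorphism $V_1\to\mathbb{R}^d$ and pulling back a large open ball, I obtain an open $U_1\cong\mathbb{R}^d$ with $K_1\subset U_1\subset V_1$ and $\overline{U_1}$ compact. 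Inductively, applying the hypothesis to the compact set $\overline{U_i}\cup K_{i+1}$ and performing the same shrinking trick produces a nested family $U_1\subset U_2\subset\cdots$ with each $U_i\cong\mathbb{R}^d$, each $\overline{U_i}$ compact, and $\bigcup_i U_i=M$.

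The core step is to build compatible diffeomorphisms $h_i:U_i\to\mathbb{R}^d$ whose restrictions stabilize on compact sets. Fixing compact sets $C_i\subset U_i$ with $C_i\subset C_{i+1}$ and $\bigcup_i C_i=M$, I would construct the $h_i$ inductively so that $h_{i+1}|_{C_i}=h_i|_{C_i}$. Starting from any diffeomorphism $h'_{i+1}:U_{i+1}\to\mathbb{R}^d$, consider the embedding $\eta_i=h'_{i+1}\circ h_i^{-1}:\mathbb{R}^d\to\mathbb{R}^d$; I would then replace $h'_{i+1}$ by $\psi_i\circ h'_{i+1}$ for a self-diffeomorphism $\psi_i$ of $\mathbb{R}^d$ engineered so that $\psi_i\circ\eta_i$ equals the identity on the compact set $h_i(C_i)$. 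Once the stabilization is arranged, for each $p\in M$ the sequence $h_i(p)$ is eventually constant, and $h(p):=\lim_i h_i(p)$ defines a smooth map $h:M\to\mathbb{R}^d$. Injectivity and surjectivity of $h$ then follow from the uniform control built into the construction, together with the exhaustion $\bigcup_i C_i=M$.

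The main obstacle is the realization of $\psi_i$ at each step, which is an ambient standardization problem: given a smooth embedding $\eta_i:\mathbb{R}^d\hookrightarrow\mathbb{R}^d$ and a compact set $D_i=h_i(C_i)$, I need a self-diffeomorphism of $\mathbb{R}^d$ that matches $\eta_i^{-1}$ on $\eta_i(D_i)$. The key tools are the smooth isotopy extension theorem, together with the fact that $\mathrm{Diff}(\mathbb{R}^d)$ acts transitively on smoothly embedded closed $d$-balls (the disc theorem); these combine to let one absorb the discrepancy between $\eta_i$ and $\mathrm{id}$ on any prescribed compact set into a globally defined ambient diffeomorphism. Organizing the inductive choices so that the sets $C_i$ genuinely exhaust $M$ while the successive modifications $\psi_i$ remain controlled enough for the limit $h$ to exist and be a diffeomorphism is the delicate bookkeeping that must be handled with care.
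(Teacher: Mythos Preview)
The paper does not give its own proof of this lemma; it simply cites Milnor's lecture notes \cite[p.~168]{Milnor-1964} for both the statement and the argument. Your outline is exactly the proof found there: exhibit $M$ as a monotone union of open sets diffeomorphic to $\mathbb{R}^d$, then build a global diffeomorphism as a stabilized limit of chart maps $h_i$, with the engulfing step (your $\psi_i$) supplied by the Palais--Cerf disc theorem together with isotopy extension. So your approach is correct and coincides with the source the paper defers to.

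One point deserves to be made explicit rather than folded into ``uniform control.'' From your construction the limit $h$ is well defined and injective, but surjectivity does \emph{not} follow from $\bigcup_i C_i=M$ alone: you also need $\bigcup_i h_i(C_i)=\mathbb{R}^d$. In Milnor's treatment this is arranged by interleaving the choices so that, after the stabilization at stage $i$, one further post-composes $h_{i}$ with a compactly supported self-diffeomorphism of $\mathbb{R}^d$ (the same engulfing lemma, applied once more) to guarantee that $h_i(C_i)$ contains the closed ball of radius $i$. Your final paragraph correctly flags the bookkeeping as delicate, but this is the specific place where it bites.
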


\begin{remark}
Milnor's original statement~\cite[p.~168]{Milnor-1964} assumes $M$ is paracompact. This can be derived from our presupposition that manifolds are second-countable Hausdorff spaces. For a proof,  see, e.g.,~\cite[Theorem 1.15]{Lee-2018}.
\end{remark}

The next lemma addresses the existence of smooth bump functions.
\begin{lemma}\label{L-2-8}
Given $a,b\in \mathbb R$ with $a<b$, there exists a smooth function $\eta_{a,b}:\mathbb R\to\mathbb R$ such that $\eta_{a,b}\equiv1$ on $(-\infty,a]$, $\eta_{a,b}\equiv 0$ on $[b,+\infty)$, and $0<\eta_{a,b}<1$ on $(a,b)$.
\end{lemma}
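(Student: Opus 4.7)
The plan is to follow the standard bump function recipe, rescaled and reflected so that the function decreases from $1$ to $0$ across the interval $[a,b]$.

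First I would introduce the classical auxiliary function $\psi:\mathbb R\to\mathbb R$ defined by $\psi(t)=e^{-1/t}$ for $t>0$ and $\psi(t)=0$ for $t\leq 0$. A standard induction on the order of the derivative, together with the fact that for every polynomial $P$ one has $P(1/t)e^{-1/t}\to 0$ as $t\to 0^+$, shows that $\psi$ is $C^\infty$ on $\mathbb R$ and vanishes to infinite order at $0$. I would cite this as well known rather than redo the computation.

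Next I would define
\[
\phi(t)=\frac{\psi(t)}{\psi(t)+\psi(1-t)}.
\]
The denominator is strictly positive everywhere: if it vanished at some $t$, then $\psi(t)=0$ and $\psi(1-t)=0$, forcing $t\leq 0$ and $1-t\leq 0$, which is impossible. Hence $\phi$ is $C^\infty$, and direct inspection yields $\phi(t)=0$ for $t\leq 0$, $\phi(t)=1$ for $t\geq 1$, and $0<\phi(t)<1$ on $(0,1)$.

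Finally I would set
\[
\eta_{a,b}(t)=1-\phi\!\left(\frac{t-a}{b-a}\right).
\]
Since $b-a>0$, the composition is smooth. When $t\leq a$ the argument is $\leq 0$ so $\phi=0$ and $\eta_{a,b}=1$; when $t\geq b$ the argument is $\geq 1$ so $\phi=1$ and $\eta_{a,b}=0$; and on $(a,b)$ the argument lies in $(0,1)$ so $0<\phi<1$ and hence $0<\eta_{a,b}<1$. There is no real obstacle here: the only point requiring attention is verifying smoothness of $\psi$ at $0$, which is a textbook calculation and may simply be invoked.
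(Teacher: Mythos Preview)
Your argument is correct and follows the standard textbook construction of a smooth cutoff via the auxiliary $\psi(t)=e^{-1/t}$ and the ratio $\psi(t)/(\psi(t)+\psi(1-t))$, then an affine reparametrization. The paper instead writes down a single explicit formula, setting $\eta_{a,b}(x)=\exp\bigl((x-a)/(x-b)\bigr)$ on $(a,b)$ and extending by $1$ to the left and $0$ to the right. That formula is indeed smooth at $x=b$, where the exponent tends to $-\infty$ and kills every derivative; but at $x=a$ the exponent tends to $0$, so near $a$ one has $\eta_{a,b}(x)\approx 1+(x-a)/(a-b)$ and the right-hand derivative equals $1/(a-b)\neq 0$, whereas the left-hand derivative is $0$. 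Thus the paper's candidate is not even $C^1$ at $a$. Your longer route is therefore not merely an alternative but actually repairs a gap: the brevity of the paper's formula is bought at the cost of the very smoothness the lemma asserts.
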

\begin{proof}
It is easy to see that the function
\[
\eta_{a,b}(x)=
\begin{cases}
\begin{aligned}
&1, &&x\leq a,\\
&\exp\left(\dfrac{x-a}{x-b}\right), &&a<x<b,\\
&0, &&x\geq b
\end{aligned}
\end{cases}
\]
satisfies the required properties.
\end{proof}

We are ready to demonstrate Theorem~\ref{T-2-5}.

\begin{proof}[\textbf{Proof of Theorem~\ref{T-2-5}}]
By Lemma~\ref{L-2-6}, it suffices to show that every compact subset $K\subset M$ is contained in an open subset $W\subset M$ that is diffeomorphic to $\mathbb{R}^d$.

For this purpose, we first equip $M$ with a Riemannian metric. Let $q$ be the unique critical point of $f$, and let $B_q\subset M$ be a relatively compact neighborhood of $q$ that is diffeomorphic to $\mathbb{R}^d$. Suppose $\varphi:\mathcal D\to M$ is the negative gradient flow of $f$. By virtue of Proposition~\ref{P-2-10} below, there exists $T>0$ such that
\[
\varphi(T,K)\subset B_q.
\]
Let $\eta_{a,b}: \mathbb R\to\mathbb R$ be given as in Lemma~\ref{L-2-8} with
\[
a=1+\max\nolimits_{p\in K} f(p)\quad\text{and}\quad b=2+\max\nolimits_{p\in K} f(p).
\]
Then $-\eta_{a,b}(f)\nabla f$ is a smooth vector field on $M$ whose support equals
\[
K_b:=\big\{p\in M: f(p)\leq b\big\}.
\]
Since $f: M\to\mathbb R$ is proper and lower-bounded,  $K_b$ is clearly a compact set. By Theorem~\ref{T-2-1}, $-\eta_{a,b}(f)\nabla f$ generates a complete flow
$\varphi_\eta:\mathbb R\times M\to M$. Thus for every
$t\in \mathbb R$, the flow map
$\varphi_\eta(t,\cdot):M \to M$ is a diffeomorphism. Note that
\[
\frac{\mathrm{d}f\big(\varphi_\eta(t,p)\big)}{\mathrm{d}t}
=-\eta_{a,b}(f)\,\| \nabla f \|^2\leq 0,
\]
and $\eta_{a,b}(f)\equiv1$ on
$K_a:=\big\{p\in M: f(p)\leq a\big\}$.
For $t\geq 0$ and $p\in K_a$, we derive
 \[
\varphi_\eta(t,p)=\varphi(t,p)\in K_a.
\]
Because $T>0$ and $K\subset K_a$, this implies
\[
\varphi_\eta(T,K)=\varphi(T,K)\subset B_q.
\]
Setting $W=\varphi_\eta(-T,B_q)$ gives the required open set, proving the assertion.
\end{proof}

\begin{remark}
As previously noted, the original flow $\varphi$ may not be complete, which impedes defining the backward flow map on $B_q$. Fortunately, this issue is remedied by the modified flow $\varphi_\eta$, allowing the required construction to proceed.
\end{remark}

The following proposition provides a uniform bound for the ``arrival time'' of the negative gradient flow when the initial points lie in a compact set.
\begin{proposition}\label{P-2-10}
There exists $T>0$ such that for all $t\geq T$,
\[
\varphi(t,K)\subset B_q.
\]
\end{proposition}

\begin{proof}
Define
\[
\mu=\min_{p\in\partial B_q} f(p)\quad\text{and}\quad M_{\mu}=\big\{p\in M: f(p)<\mu\big\}.
\]
We claim $M_{\mu}\subset B_q$. Otherwise, there exists another critical point $\tilde{q}$ of $f$ such that
\[
f(\tilde{q})=\min_{p\in M\setminus B_q} f(p)<\mu=\min_{p\in\partial B_q} f(p),
\]
contradicting the assumption that $f$ has exactly one critical point.
For every $p\in M$, Theorem~\ref{T-2-3} guarantees the existence of $t_p\geq 0$ with the property that
\[
\varphi(t_p,p)\in M_{\mu}.
\]
Since $\varphi(t_p,\cdot): M\to M$ is a smooth map and $M_{\mu}$ is an open subset, one can find a neighborhood $U_p\subset M$ of $p$ satisfying
\[
\varphi(t_p,U_p)\subset M_{\mu}.
\]
Note that $\big\{U_p\big\}_{p\in K}$ forms an open cover of the compact set $K$ and $f\big(\varphi(t,p)\big)$ is a decreasing function of $t$. By selecting a finite subcover, we readily obtain $T>0$ such that for all $t\geq T$,
\[
\varphi(t,K)\subset M_{\mu}\subset B_q,
\]
thereby establishing the proposition.
\end{proof}

To derive Theorem~\ref{T-1-7}, we invoke the following result affirming the Generalized Poincar\'{e} Conjecture for differentiable manifolds. It was proved by Smale~\cite{Smale-1961} in dimensions greater than $4$, by Freedman~\cite{Freedman-1982} in dimension $4$, and by Perelman~\cite{Perelman-2002,Perelman-2003.3,Perelman-2003.7} in dimension $3$. For dimensions lower than 3, it has long been known by the classification of manifolds in those dimensions.

\begin{theorem}[Smale, Freedman, Perelman]\label{T-2-11}
Every closed smooth $d$-manifold that has the homotopy type of $S^d$ is homeomorphic to $S^d$.
\end{theorem}

\begin{remark}
The smoothness condition in the above theorem can actually be dropped. For $d\geq5$, this was established by Newman~\cite{Newman-1966} building on research of Stallings~\cite{Stallings-1960} and Zeeman~\cite{Zeeman-1961}. For $d=4$ and $d=3$, the result is attributed to Freedman~\cite{Freedman-1982} and Perelman~\cite{Perelman-2002,Perelman-2003.3,Perelman-2003.7}, respectively.
\end{remark}

We are now in a position to show Theorem~\ref{T-1-7}.
\begin{proof}[\textbf{Proof of Theorem~\ref{T-1-7}}]
Property $(i)$ follows straightforwardly from Lemma~\ref{L-2-2} and Theorem~\ref{T-2-3}, while property $(ii)$ was established in Theorem~\ref{T-2-5}.

To address property $(iii)$, we first note that $M_c=\big\{x\in M: f(x)<c\big\}$ is a smooth manifold of dimension $d$. Define
\[
h=\frac{1}{c-f},
\]
which serves as a Lyapunov-Reeb function on $M_c$. By property $(ii)$ (equivalently, Theorem~\ref{T-2-5}), we infer that $M_c$ is diffeomorphic to $\mathbb{R}^d$.

It remains to characterize the topology of the level set
$L_c=\big\{x\in M: f(x)=c\big\}$.  Since $f$ is proper and $c>\inf f$ is a regular value, the Regular Value Theorem~\cite[Corollary 5.14]{Lee-2018} implies $L_c$ is a closed $(d-1)$-dimensional smooth manifold. In view of Theorem~\ref{T-2-11}, to prove $L_c$
is homeomorphic to $S^{d-1}$, we need to show $L_c$ has the homotopy type of $S^{d-1}$.

To this end, we refine a technique by Wilson~\cite{Wilson-1967} to construct a homotopy equivalence between $L_c$ and $M\setminus\{q\}$, where $q$ is the unique critical point of $f$. To start, for any $p\in M\setminus\{q\}$, consider the maximal integral curve $\xi_p:I_p\to M\setminus\{q\}$
satisfying the ordinary differential equation
\[
\frac{\mathrm d \xi_p}{ \mathrm d t} = -\frac{\nabla f}{\|\nabla f\|^2}
\]
with initial condition $\xi_p(0)=p$. A routine calculation yields
\[
\frac{\mathrm d f\big(\xi_{p}(t)\big)}{\mathrm d t}\equiv-1.
\]
By the Escape Lemma~\cite[Lemma 9.19]{Lee-2018}, the maximal interval is given by
\[
I_p=\big(-\infty,f(p)-f(q)\big).
\]
Letting $p$ vary over $L_c$ defines a smooth map
\[
\begin{aligned}
\psi:\; \mathcal D_c\;\,&\longrightarrow\; M\setminus\{q\}\\
          (t,p)\;&\longmapsto\;\; \xi_p(t),
\end{aligned}
\]
where $\mathcal D_c=\big(-\infty,c-f(q)\big)\times L_c$. By Proposition~\ref{P-2-13} below,  $\psi$ is a homeomorphism. Since $M$ is diffeomorphic to $\mathbb{R}^d$,  both $M\setminus\{q\}$ and $\mathcal D_c$ have the homotopy type of $S^{d-1}$. Noting that $L_c$ is homotopy equivalent to $\mathcal D_c$, we prove $L_c$ is a closed smooth manifold homotopy equivalent to $S^{d-1}$. Overall, these topological relations are summarized as follows:
\[
L_c\simeq \mathcal D_c\cong M\setminus\{q\}\cong\mathbb R^d\setminus\{\mathbf{0}\}\simeq S^{d-1},
\]
where $\mathbf{0}\in \mathbb R^d$ denotes the origin, and the symbols ``$\simeq$'' and ``$\cong$'' denote homotopy equivalence and homeomorphism between topological spaces, respectively. In light of Theorem~\ref{T-2-11}, we conclude that $L_c$ is homeomorphic to $S^{d-1}$, as desired.
\end{proof}

\begin{proposition}\label{P-2-13}
The map $\psi:\mathcal D_c\to M\setminus\{q\}$ is a homeomorphism.
\end{proposition}
\begin{proof}
First it is straightforward to see that
\[
\dim\left(\mathcal D_c\right)=\dim \left(M\setminus\{q\}\right)=d.
\]
For $d=1$, $\psi$ is clearly a homeomorphism. Henceforth, we assume $d\geq 2$ and observe the following properties:
\begin{itemize}
\item[\textbf{(x1)}] $\psi$ is proper. This follows from the Escape Lemma~\cite[Lemma 9.19]{Lee-2018}.
\item[\textbf{(x2)}] $\psi$ is injective. Suppose $(t_1,p_1),(t_2,p_2)\in\mathcal D_c$ and $y\in M\setminus\{q\}$ satisfy
    \[
\xi_{p_1}(t_1)=\xi_{p_2}(t_2)=y.
    \]
    We need to show $(t_1,p_1)=(t_2,p_2)$. As before, a direct calculation gives
    \[
    \frac{\mathrm d f\big(\xi_{p_1}(t)\big)}{\mathrm d t}\equiv-1,
    \]
    which yields
    \[
    -t_1=f\big(\xi_{p_1}(t_1)\big)-f\big(\xi_{p_1}(0)\big)=f(y)-f(p_1).
    \]
    Similarly,
    \[
    -t_2=f(y)-f(p_2).
    \]
    Because $p_1,p_2\in L_c$, we have
    \[
    t_1=f(p_1)-f(y)=c-f(y)=f(p_2)-f(y)=t_2.
    \]
Moreover,
    \[
    p_1=\xi_y(-t_1)=\xi_y(-t_2)=p_2.
    \]
Therefore, $(t_1,p_1)=(t_2,p_2)$, confirming that $\psi$ is injective.
\end{itemize}

From property~\textbf{(x1)}, $\psi(\mathcal D_c)$ is a closed subset of $M\setminus\{q\}$. Meanwhile, by Brouwer's Invariance of Domain Theorem,
property~\textbf{(x2)} implies $\psi(\mathcal D_c)$ is open in $M\setminus\{q\}$. Since $\psi(\mathcal D_c)$ is clopen in $M\setminus\{q\}$ and  $M\setminus\{q\}$ is connected for $d\geq 2$, we deduce that
$\psi(\mathcal D_c)=M\setminus\{q\}$. In summary, $\psi$ is a bijective smooth map and thus a homeomorphism.
\end{proof}

\begin{remark}
Wilson~\cite{Wilson-1967} obtained similar results to properties $(ii)$ and $(iii)$ of Theorem~\ref{T-1-7} under stronger assumptions that $M$ is an open subset of
$\mathbb R^d$ and $f$ is a global Lyapunov function for a complete flow on $M$. By comparison, after relaxing Wilson's conditions (particularly the flow's completeness), our theorem offers broader applicability  in characterizing the topology of smooth manifolds.
\end{remark}

\section{Moduli spaces of arm linkages and configurations}\label{S-3}
In this section, we determine the topology of the moduli spaces of arm linkages and configurations. As noted earlier, our strategy is to construct L-R functions on these spaces. To achieve this goal, we need to examine the smooth structures of $\mathcal M(R_m)$ and $\mathcal M(R_m,\ell)$.

Let us begin with the moduli space $\mathcal M(R_m)$. For each $p\in \mathcal M(R_m)$, via an appropriate orientation-preserving isometry, we may assume
\[
\begin{aligned}
&p(v_1)=(0,0),\quad p(v_2)=(\rho_1,0),\\
&p(v_3)=p(v_2)+(\rho_2\cos\theta_2,\rho_2\sin\theta_2),\\
&\quad\qquad\qquad\qquad\vdots\\
&p(v_{m})=p(v_{m-1})+(\rho_{m-1}\cos\theta_{m-1},\rho_{m-1}\sin\theta_{m-1}),
\end{aligned}
\]
where $(\rho,\theta)
:=(\rho_1,\cdots,\rho_{m-1},\theta_2,\cdots,\theta_{m-1})\in \mathbb R_{+}^{m-1}\times\left(\mathbb R/2\pi\mathbb Z\right)^{m-2}$.
Noting that each
$p\in \mathcal M(R_m)$ is uniquely determined by $(\rho,\theta)$, we can identify $\mathcal M(R_m)$ with a subset of $\mathbb R_{+}^{m-1}\times\left(\mathbb R/2\pi\mathbb Z\right)^{m-2}$. Since the self-avoiding property is stable under small perturbations,  $\mathcal M(R_m)$ is actually an open subset of
$\mathbb R_{+}^{m-1}\times\left(\mathbb R/2\pi\mathbb Z\right)^{m-2}$, thus inheriting the induced smooth structure from the ambient space. Meanwhile,
\[
\mathcal M(R_m,\ell)=\left\{(\rho,\theta)\in\mathcal M(R_m):\rho=\vec{\ell}\right\},
\]
where $\vec{\ell}=\big(\ell([v_1,v_2]),\cdots,\ell([v_{m-1},v_m])\big)$ denotes the length vector. This implies that $\mathcal M(R_m,\ell)$ carries the smooth structure of an embedded submanifold of $\mathcal M(R_m)$.

In summary, we obtain the following lemma, part of which has appeared in the works of Shimamoto and Vanderwaart~\cite{Shimamoto-2005}
 and Farber~\cite[Chap. 3]{Farber-2008}.
\begin{lemma}\label{L-3-1}
Let $m\geq 2$ and let $\ell:E\to\mathbb R_{+}$ be an arbitrary assignment. The following statements hold:
\begin{itemize}
\item[$(i)$]$\mathcal M(R_m)$ is a smooth manifold of dimension $2m-3$.
\item[$(ii)$]$\mathcal M(R_m,\ell)\subset \mathcal M(R_m)$ is a smooth submanifold of dimension $m-2$.
\end{itemize}
\end{lemma}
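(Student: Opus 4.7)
The plan is to formalize the parametrization already sketched in the two paragraphs preceding the lemma: build an explicit global chart on $\mathcal M(R_m)$ identifying it with an open subset of the $(2m-3)$-dimensional manifold $\mathbb R_+^{m-1} \times (\mathbb R/2\pi\mathbb Z)^{m-2}$, and then exhibit $\mathcal M(R_m,\ell)$ as a regular slice therein.

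First I would make the chart precise. Since any $p \in \mathcal M(R_m)$ has $p([v_1,v_2])$ a non-degenerate segment, one has $p(v_1) \ne p(v_2)$, so there is a unique orientation-preserving isometry $\zeta_p$ of $\mathbb R^2$ that sends $p(v_1)$ to the origin and $p(v_2)$ to a point on the positive $x$-axis. This selects a canonical representative of the equivalence class, and the displayed polar-type formulas read off coordinates $(\rho,\theta)$. The resulting map $\Psi:\mathcal M(R_m)\to \mathbb R_+^{m-1} \times (\mathbb R/2\pi\mathbb Z)^{m-2}$ is a bijection onto its image, with smooth inverse given by the same formulas read in reverse, and both directions are continuous with respect to the metric $d$ on $\mathcal M(R_m)$.

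The central step is to prove that $\Psi(\mathcal M(R_m))$ is open, i.e., that self-avoidance is preserved under small perturbations of $(\rho,\theta)$. Fix a self-avoiding $p$ with edges $e_i := p([v_i,v_{i+1}])$. Any two non-adjacent edges are disjoint compact subsets of $\mathbb R^2$, hence separated by a strictly positive Euclidean distance; and any two adjacent edges $e_i,e_{i+1}$ meet only at the shared vertex $p(v_{i+1})$, which amounts to an open condition on the directional angles (failure of this condition would force $e_{i+1}$ to be antiparallel to, and thus overlap, $e_i$). Both the pairwise distances between non-adjacent edges and the relative directional angles are continuous functions of $(\rho,\theta)$, so all these strict inequalities survive on a neighborhood of $\Psi(p)$. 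This yields openness and part (i).

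For part (ii), under $\Psi$ the subspace $\mathcal M(R_m,\ell)$ corresponds to $\{\rho = \vec\ell\} \cap \Psi(\mathcal M(R_m))$. Since the projection $(\rho,\theta) \mapsto \rho$ is a smooth submersion onto $\mathbb R_+^{m-1}$, this slice is an embedded submanifold of dimension $(2m-3)-(m-1) = m-2$, as claimed. I expect the chief obstacle to be the openness step; although morally a routine continuity argument, it requires careful accounting for the ways in which a self-avoiding polyline can fail to remain self-avoiding under perturbation (non-adjacent edges crossing, or adjacent edges folding into one another), and verifying that each failure mode is ruled out by a strict open inequality at the reference configuration.
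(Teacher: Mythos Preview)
Your proposal is correct and follows essentially the same approach as the paper: the argument you outline is precisely a fleshed-out version of the discussion preceding the lemma, which identifies $\mathcal M(R_m)$ with an open subset of $\mathbb R_+^{m-1}\times(\mathbb R/2\pi\mathbb Z)^{m-2}$ via the $(\rho,\theta)$ coordinates and then reads off $\mathcal M(R_m,\ell)$ as the slice $\{\rho=\vec\ell\}$. The paper simply asserts that ``the self-avoiding property is stable under small perturbations'' without further justification, whereas you spell out the continuity argument distinguishing non-adjacent and adjacent edge pairs; this is a reasonable elaboration but not a different method.
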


To establish that strain energies on $\mathcal M(R_m,\ell)$ are L-R functions, we invoke the following result by Connelly, Demaine and Rote~\cite[Theorem 3]{Connelly-2003}, which played a pivotal role in resolving the Carpenter's Rule Problem. For alternative proofs, see also the works of Rote, Santos and Streinu~\cite[Theorem 4.3]{Rote-2003} and Farber~\cite[Chap.~3]{Farber-2008}. We call an element in $\mathcal M(R_m,\ell)$ (resp. $\mathcal M(R_m)$) a \textbf{straight linkage} (resp.~\textbf{straight configuration}) if all its vertices are collinear.

\begin{theorem}[Connelly-Demaine-Rote]\label{T-3-2}
Suppose $p\in \mathcal M(R_m,\ell)$ is not the straight linkage. Then there exists an expansive motion $\{p_t\}_{-\varepsilon<t<\varepsilon}$ ($\varepsilon>0$) in $\mathcal M(R_m,\ell)$ such that $p_0=p$.
\end{theorem}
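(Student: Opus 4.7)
The plan is to reduce the existence of a genuine expansive motion to the existence of an \emph{infinitesimally expansive motion}, and then to establish the latter by a linear-programming duality argument. An infinitesimal motion at $p$ is a tuple of velocity vectors $(u_1,\ldots,u_m)\in(\mathbb R^2)^m$ satisfying, for each edge $[v_i,v_{i+1}]$ of $R_m$, the linearized length constraint $\langle u_i-u_{i+1},\,p(v_i)-p(v_{i+1})\rangle = 0$; it is infinitesimally expansive if, in addition, $\langle u_i-u_j,\,p(v_i)-p(v_j)\rangle \geq 0$ for every non-adjacent pair $\{v_i,v_j\}$, with at least one strict inequality. Since $\mathcal M(R_m,\ell)$ is a smooth manifold by Lemma~\ref{L-3-1} whose tangent space at $p$ is, modulo infinitesimal rigid motions, exactly the solution space of the edge-length equations, and since self-avoidance is an open condition, any infinitesimally expansive motion integrates to a smooth family $\{p_t\}_{-\varepsilon<t<\varepsilon}\subset\mathcal M(R_m,\ell)$ that is expansive on a small interval about $t=0$.

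Next, I would apply Farkas' lemma to the finite linear system encoding infinitesimal expansiveness. The alternative to existence of such a motion is the existence of a nontrivial \emph{self-stress} with a prescribed sign pattern: real weights $\omega_{ij}$ on all unordered pairs $\{v_i,v_j\}$ such that $\omega_{ij}\geq 0$ whenever $\{v_i,v_j\}$ is a non-edge, $\omega_{ij}$ is unconstrained in sign on edges, not all non-edge weights vanish, and the equilibrium condition $\sum_{j\neq i}\omega_{ij}\big(p(v_i)-p(v_j)\big)=0$ holds at every vertex. It thus suffices to show that no such self-stress exists when $p$ is not straight.

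To rule out such a self-stress, I would examine convex-hull vertices of $\{p(v_1),\ldots,p(v_m)\}$. At any extreme vertex $p(v_k)$, every non-edge stress vector $\omega_{kj}\big(p(v_k)-p(v_j)\big)$ lies in the closed half-plane determined by a supporting line through $p(v_k)$; the equilibrium at $p(v_k)$ is therefore forced to be balanced against the at most two incident arm edges, which severely constrains the signs and magnitudes of the arm stresses. Peeling extreme vertices off successively and tracking how the stress propagates along the arm, one shows that the only way to maintain equilibrium throughout is for all vertices to be collinear, contradicting the hypothesis. Alternatively, one can package this analysis using the pointed pseudo-triangulations of Rote-Santos-Streinu: extend $p$ to a pointed pseudo-triangulation on the same vertex set (this is where non-straightness is used, to ensure pointedness), and remove any added diagonal to obtain a one-parameter mechanism whose infinitesimal motion is automatically expansive by a Maxwell-Cremona-type sign analysis of the pseudo-triangulation's rigidity matrix.

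The main obstacle is precisely this third step, namely the nonexistence of the self-stress (equivalently, the construction of an infinitesimal expansive motion via pseudo-triangulations). Tracking the stress propagation requires a nontrivial case analysis at convex-hull vertices, careful treatment of collinear subconfigurations and of the two distinguished endpoints $v_1,v_m$ of the arm (where only one edge is incident), and a global consistency argument that inevitably invokes some form of the Maxwell-Cremona correspondence or an equivalent planar-graph-theoretic ingredient.
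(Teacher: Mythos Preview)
The paper does not supply its own proof of this theorem: it is quoted as a result of Connelly, Demaine and Rote~\cite[Theorem~3]{Connelly-2003}, with alternative proofs attributed to Rote, Santos and Streinu~\cite{Rote-2003} and Farber~\cite{Farber-2008}. Your sketch is therefore not being compared against anything in the paper itself; rather, you have correctly reconstructed the architecture of the original Connelly--Demaine--Rote argument (linearize, apply Farkas/LP duality, then exclude the dual self-stress by a convex-position or Maxwell--Cremona analysis), and you also flag the Rote--Santos--Streinu pseudo-triangulation alternative.

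One genuine technical point deserves attention. In your first step you aim for an infinitesimal motion with \emph{at least one} strict inequality among the non-adjacent pairs, and then assert it integrates to an expansive motion on an interval. That inference is not quite right under the paper's definition of expansive motion, which demands $\frac{\mathrm d}{\mathrm d t}\|p_t(v_i)-p_t(v_j)\|\geq 0$ for \emph{every} $t$ in $(-\varepsilon,\varepsilon)$ and every non-adjacent pair. If some non-adjacent pair has first-order derivative exactly zero at $t=0$, there is no control on its sign for nearby $t$. What Connelly--Demaine--Rote actually establish is the existence of a \emph{strictly} expansive infinitesimal motion (every non-adjacent distance has strictly positive first derivative), and it is this strictness that makes the integration step go through on a small interval. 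The Farkas alternative accordingly needs to be formulated so as to produce a strict separating motion; this is where the argument becomes delicate (and where, as you note, the self-stress exclusion does the real work).
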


We proceed to show Theorem~\ref{T-1-11}.

\begin{proof}[\textbf{Proof of Theorem \ref{T-1-11}}]
Suppose $\Phi:\mathcal M(R_m,\ell)\to[0,+\infty)$ is a strain energy. It is easy to see that $\Phi$ has a minimum point $q\in \mathcal M(R_m,\ell)$. We shall prove that $q$ is the unique critical point of $\Phi$, which confirms $\Phi$ is a Lyapunov-Reeb function.

First we claim $q$ is the straight linkage. If this is not the case, then Theorem~\ref{T-3-2} implies there exists an expansive motion $\{p_t\}_{-\varepsilon<t<\varepsilon}$ ($\varepsilon>0$) in
$\mathcal M(R_m,\ell)$ such that $p_0=q$. Since $\Phi$ is a strain energy, we have
\[
\frac{\mathrm{d}\Phi(p_t)}{\mathrm{d}t}\bigg|_{t=0}<0,
\]
contradicting the fact that $p_0=q$ is a minimum point of $\Phi$.

Next we verify that $\Phi$ has no critical points other than $q$. Assume, for contradiction, that $q'$ is another critical point of $\Phi$. Then $q'$ cannot be the straight linkage. Applying Theorem~\ref{T-3-2} again, we obtain an expansive motion $\{p'_t\}_{-\varepsilon'<t<\varepsilon'}$ ($\varepsilon'>0$) in $\mathcal M(R_m,\ell)$ with $p'_0=q'$. Because $q'$ is a critical point of $\Phi$, a straightforward computation yields
\[
\frac{\mathrm{d}\Phi(p'_t)}{\mathrm{d}t}\bigg|_{t=0}=0,
\]
which contradicts the definition of strain energies. Thus the theorem is proved.
\end{proof}

To construct L-R functions on $\mathcal M(R_m)$, we devise a projection method. Precisely, for each $p\in \mathcal M(R_m)$, let $\varsigma(p)\in \mathcal M(R_m)$ be the straight configuration sharing the same length vector as $p$. This induces the following projection map
\[
\begin{aligned}
\varsigma\;:\;\mathcal M(R_m)\;&\longrightarrow\;\mathcal L(R_m)\\
p\;\;\;\,&\longmapsto\;\;\varsigma(p),
\end{aligned}
\]
where $\mathcal L(R_m)\subset\mathcal M(R_m)$ denotes the subspace of straight configurations. In terms of coordinates, $\mathcal L(R_m)$ is characterized by
\[
\mathcal L(R_m)=\left\{(\rho,\theta)\in\mathcal M(R_m):\theta=\vec{0}\right\}.
\]
Here $\vec{0}:=(0,\cdots,0)\in \left(\mathbb R/2\pi\mathbb Z\right)^{m-2}$ represents the null vector. Obviously, $\mathcal L(R_m)$ is a smooth manifold diffeomorphic to $\mathbb R_{+}^{m-1}$.

\begin{definition}
A smooth map $\Phi:\mathcal M(R_m)\to [0,+\infty)$ is called a
\textbf{$\varsigma$-potential} on $\mathcal M(R_m)$ if the following properties hold:
\begin{itemize}
\item[\textbf{(a1)}] For every $q\in \mathcal L(R_m)$, the restriction of $\Phi$ to $\varsigma^{-1}(q)$ is a strain energy;
\item[\textbf{(a2)}]For every compact set $K\subset \mathcal L(R_m)$, the restriction of $\Phi$ to $\varsigma^{-1}(K)$ is proper.
\end{itemize}
\end{definition}

\begin{remark}\label{R-3-3}
For $m\geq 3$, one readily checks that the function
\[
\Phi(p)=\sum_{\substack{[v_i,v_j]\in E \\ v_k\in V\setminus\{v_i,v_j\}}}\frac{1}
{\left(\|p(v_i)-p(v_k)\|+\|p(v_j)-p(v_k)\|-\|p(v_i)-p(v_j)\|\right)^2}
\]
mentioned in Remark~\ref{R-1-10} serves as a $\varsigma$-potential on $\mathcal M(R_m)$.
\end{remark}

With these concepts, we acquire L-R functions on
$\mathcal M(R_m)$ as follows.

\begin{theorem}\label{T-3-5}
Let $m\geq 2$. Assume $\Phi:\mathcal M(R_m)\to [0,+\infty)$ is a $\varsigma$-potential and assume $h:\mathcal L(R_m)\to\mathbb R$ is a Lyapunov-Reeb function. Then
\[
f=\Phi-\Phi\circ\varsigma+h\circ\varsigma
\]
is a Lyapunov-Reeb function on $\mathcal M(R_m)$.
\end{theorem}

\begin{proof}
For each $p\in \mathcal M(R_m)$, since $\varsigma(p)$ is the straight configuration sharing the same length vector as $p$, we derive from
property~\textbf{(a1)} and Theorem~\ref{T-1-11} that
\begin{equation}\label{E-3-1}
\Phi(p)\geq \Phi\big(\varsigma(p)\big).
\end{equation}
Given that $h$ is lower-bounded,  $f$ must also be lower-bounded.

Next we prove $f$ is proper. For $c_1=\inf f$ and $c_2>\inf f$, it suffices to show that any sequence $\{p_n\}$ contains a convergent subsequence, provided
\begin{equation}\label{E-3-2}
c_1\leq f(p_n)=\Phi(p_n)-\Phi\big(\varsigma(p_n)\big)
+h\big(\varsigma(p_n)\big)\leq c_2.
\end{equation}
Because $\Phi$ satisfies property~\textbf{(a2)} and $h$ is a Lyapunov-Reeb function on $\mathcal L(R_m)$, we reduce the proof to verifying that both $\big\{h(\varsigma(p_n))\big\}$ and $\big\{\Phi(p_n)\big\}$ are upper-bounded. From~\eqref{E-3-1} and~\eqref{E-3-2}, it is straightforward to see
\[
h\big(\varsigma(p_n)\big)\leq c_2.
\]
By the properness of $h$, we deduce that $\big\{\varsigma(p_n)\big\}$ lies in the compact set
\[
K:=h^{-1}\big([b_2,c_2]\big)\subset \mathcal L(R_m),
\]
where $b_2=\min_{q\in \mathcal L(R_m)} h(q)$. Reapplying~\eqref{E-3-2}, we obtain
\[
\Phi(p_n)\leq c_2-b_2+\max\nolimits_{q\in K}\Phi(q).
\]

It remains to demonstrate that  $f$ possesses exactly one critical point. To do this, we regard $f$ as a function of $(\rho,\theta)$ and infer that
\[
f(\rho,\theta)=\Phi(\rho,\theta)-\Phi(\rho,\vec{0})+h(\rho,\vec{0}).
\]
Suppose $(\tilde{\rho},\tilde{\theta})$ is a critical point of $f$. For $i=2,\cdots,m-1$, taking the derivative with respect to $\theta_i$ gives
\[
\frac{\partial f}{\partial \theta_i}(\tilde{\rho},\tilde{\theta})=\frac{\partial \Phi}{\partial \theta_i}(\tilde{\rho},\tilde{\theta})=0,
\]
which implies $\tilde{\theta}$ is a critical point of  $\Phi(\tilde{\rho},\cdot)$. By property~\textbf{(a1)} and Theorem~\ref{T-1-11}, this yields $\tilde{\theta}=\vec{0}$. For $i=1,\cdots,m-1$, the derivative with respect to $\rho_i$ satisfies
\[
\frac{\partial f}{\partial \rho_i}(\tilde{\rho},\tilde{\theta})=\frac{\partial \Phi}{\partial \rho_i}(\tilde{\rho},\tilde{\theta})-\frac{\partial \Phi}{\partial \rho_i}(\tilde{\rho},\vec{0})+\frac{\partial h}{\partial \rho_i}(\tilde{\rho},\vec{0})=\frac{\partial h}{\partial \rho_i}(\tilde{\rho},\vec{0})=0.
\]
It follows that $\tilde{\rho}=\rho^\ast$, where $\rho^\ast$ is the unique critical point of $h(\cdot,\vec{0})$. Thus, $f$ has exactly one critical point $(\tilde{\rho},\tilde{\theta})=(\rho^\ast,\vec{0})$. This completes the proof.
\end{proof}

\begin{remark}\label{R-3-6}
As an example, we observe that
\[
h(q)=\sum\nolimits_{i=1}^{m-1}\log^2\|q(v_{i+1})-q(v_i)\|
\]
is a Lyapunov-Reeb function on $\mathcal L(R_m)$.
\end{remark}

We now turn to the topological characterizations of the relevant spaces.

\begin{proof}[\textbf{Proof of Theorem~\ref{T-1-3}}]
We assume $m\geq 3$, as the case $m=2$ is trivial. By virtue of Theorems~\ref{T-1-11}, ~\ref{T-3-5} and Remarks~\ref{R-1-10},~\ref{R-3-3},~\ref{R-3-6}, both
$\mathcal M(R_m,\ell)$ and $\mathcal M(R_m)$ admit Lyapunov-Reeb functions. In view of Theorem~\ref{T-1-7}, the conclusions follow.
\end{proof}

\begin{proof}[\textbf{Proof of Corollary~\ref{C-1-13}}]
It is a consequence of property $(iii)$ of Theorem~\ref{T-1-7}.
\end{proof}

\section{Moduli spaces of cycle linkages and configurations}\label{S-4}
We further investigate the moduli spaces of cycle linkages and configurations. As before, we need to equip
$\mathcal M^{+}(C_m)$ and $\mathcal M^{+}(C_m,\ell)$ with smooth structures and construct L-R functions. To ensure
$\mathcal M^{+}(C_m,\ell)\neq \emptyset$, we assume throughout this section that $\ell:E\to\mathbb R_{+}$ satisfies condition~$\mathrm{\mathbf{(c1)}}$.

\subsection{Smooth structures of moduli spaces}
We first note that $\mathcal M^{+}(C_m)$ is a non-empty open subset of
$\mathcal M(R_m)$, so it inherits the induced smooth structure from $\mathcal M(R_m)$.
Furthermore, we find that $\mathcal M^{+}(C_m,\ell)$ carries the smooth structure of an embedded submanifold of $\mathcal M^{+}(C_m)$.
For deeper results on this topic, the reader is referred to the seminal works of Kapovich and Millson~\cite{Kapovich-1995, Kapovich-1996,Kapovich-2002}.

\begin{lemma}\label{L-4-1}
Suppose $m\geq 3$ and suppose $\ell:E\to\mathbb R_{+}$ satisfies
condition~$\mathrm{\mathbf{(c1)}}$. The following hold:
\begin{itemize}
\item[$(i)$]$\mathcal M^{+}(C_m)$ is a smooth manifold of dimension $2m-3$
\item[$(ii)$]$\mathcal M^{+}(C_m,\ell)\subset \mathcal M^{+}(C_m)$ is a smooth submanifold of dimension $m-3$.
\end{itemize}
\end{lemma}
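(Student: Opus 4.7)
My plan is to leverage the smooth structure on $\mathcal{M}(R_m)$ from Lemma~\ref{L-3-1}, working throughout in the global coordinates $(\rho,\theta)\in\mathbb{R}_{+}^{m-1}\times(\mathbb{R}/2\pi\mathbb{Z})^{m-2}$ used there. For part~$(i)$, I would realize $\mathcal{M}^{+}(C_m)$ as an open subset of $\mathcal{M}(R_m)$ via the map $p\mapsto p|_{R_m}$ that forgets the closing edge $[v_m,v_1]$. This map is well-defined and injective, and its image consists of those arm configurations satisfying two additional conditions: (a) the closing segment $[p(v_m),p(v_1)]$ is disjoint from all non-adjacent arm edges, and (b) the resulting polygon has positive signed area. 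Both conditions are open in $\mathcal{M}(R_m)$---compact disjoint segments remain disjoint under small perturbations, and signed area is a continuous function of $(\rho,\theta)$---so $\mathcal{M}^{+}(C_m)$ is an open submanifold of $\mathcal{M}(R_m)$ of dimension $2m-3$.

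For part~$(ii)$, I plan to apply the Regular Value Theorem to a single explicit constraint map. With $p(v_1)=(0,0)$ and $p(v_m)=\rho_1(1,0)+\sum_{i=2}^{m-1}\rho_i(\cos\theta_i,\sin\theta_i)$ as in the parameterization above, define
\[
F:\mathcal{M}^{+}(C_m)\longrightarrow\mathbb{R}^m,\quad (\rho,\theta)\longmapsto\big(\rho_1-\ell(e_1),\ldots,\rho_{m-1}-\ell(e_{m-1}),\;\|p(v_m)\|^2-\ell(e_m)^2\big),
\]
so that $\mathcal{M}^{+}(C_m,\ell)=F^{-1}(0)$. The Jacobian of $F$ takes the block form
\[
\begin{pmatrix} I_{m-1} & 0 \\ \ast & \nabla_{\theta}\|p(v_m)\|^2 \end{pmatrix},
\]
which has full rank $m$ at a point of $F^{-1}(0)$ if and only if $\nabla_{\theta}\|p(v_m)\|^2\neq 0$ there. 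A direct calculation gives $\partial\|p(v_m)\|^2/\partial\theta_j=2\rho_j\,p(v_m)\cdot(-\sin\theta_j,\cos\theta_j)$, so the gradient vanishes precisely when $p(v_m)$ is parallel to every edge direction $(\cos\theta_j,\sin\theta_j)$ for $j=2,\ldots,m-1$.

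The main obstacle, and the key geometric input, is ruling out this simultaneous parallelism on $F^{-1}(0)$. My strategy is to exploit the fixed value $\theta_1=0$: if every $(\cos\theta_i,\sin\theta_i)$ for $i\geq 2$ is a scalar multiple of $p(v_m)$, then so is the linear combination $\sum_{i=2}^{m-1}\rho_i(\cos\theta_i,\sin\theta_i)=p(v_m)-\rho_1(1,0)$, which forces $\rho_1(1,0)$ itself to be parallel to $p(v_m)$. Since $\rho_1>0$ and $p(v_m)\neq 0$ (because $\|p(v_m)\|=\ell(e_m)>0$), I conclude that $p(v_m)$ lies on the $x$-axis, whence each $\theta_i\in\{0,\pi\}$ and all $m$ vertices of the polygon lie on the $x$-axis. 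But a simple positively oriented polygon with $m\geq 3$ vertices encloses strictly positive area, so collinear vertices are incompatible with $p\in\mathcal{M}^{+}(C_m,\ell)$. Hence $F$ is a submersion along $F^{-1}(0)$, and the Regular Value Theorem yields $\mathcal{M}^{+}(C_m,\ell)$ as an embedded submanifold of $\mathcal{M}^{+}(C_m)$ of dimension $(2m-3)-m=m-3$.
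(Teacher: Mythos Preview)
Your proof is correct and follows essentially the same approach as the paper: realize $\mathcal{M}^{+}(C_m)$ as an open subset of $\mathcal{M}(R_m)$, then apply the Regular Value Theorem to the side-length map in the $(\rho,\theta)$ coordinates, reducing full rank to the nonvanishing of the $\theta$-gradient of the closing distance and ruling this out by the collinearity contradiction. The only cosmetic differences are that you use $\|p(v_m)\|^{2}$ rather than $\|p(v_m)\|$ and you spell out explicitly (via the linear-combination step) why the first edge must also be parallel, a point the paper leaves implicit.
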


\begin{proof}
Since $\mathcal M^{+}(C_m)$ is a non-empty open subset of $\mathcal M(R_m)$, we deduce part $(i)$ from Lemma~\ref{L-3-1}. For part $(ii)$, we define a map $F:\mathcal M^{+}(C_m)\to \mathbb R_{+}^{m}$ by
\[
F(p)=\big(\|p(v_2)-p(v_1)\|,\cdots,\|p(v_m)-p(v_{m-1})\|,\|p(v_1)-p(v_m)\|\big).
\]
Then $\mathcal M^{+}(C_m,\ell)=F^{-1}(\vec{\ell})$, where
\[
\vec{\ell}=\big(\ell([v_1,v_2]),\cdots,\ell([v_{m-1},v_m]),\ell([v_{m},v_1])\big).
\]
Under condition $\mathrm{\mathbf{(c1)}}$, $\mathcal M^{+}(C_m,\ell)\neq \emptyset$. The lemma follows as a consequence of the Regular Value Theorem~\cite{Lee-2018}, provided that $\vec{\ell}$  is a regular value of the map $F$. It therefore suffices to show the Jacobian $DF(p)$ has full rank $m$ for each $p\in F^{-1}(\vec{\ell})$. By invoking the coordinates $(\rho,\theta)$ for $p$, we infer that $DF(p)$ has the form
\[
\left[\begin{array}{ccccccc}
1 & 0  & \cdots & 0 & 0 & \cdots & 0\\
0 & 1  & \cdots & 0 & 0 & \cdots & 0\\
& \vdots  & \ddots &   & & \vdots & \\
0 & 0  & \cdots & 1 & 0 & \cdots & 0 \\
\frac{\partial\rho_m}{\partial\rho_1} & \frac{\partial\rho_m}{\partial\rho_2}
& \cdots & \frac{\partial\rho_m}{\partial\rho_{m-1}} & \frac{\partial\rho_m}{\partial\theta_2}
& \cdots & \frac{\partial\rho_m}{\partial\theta_{m-1}}
\end{array}\right],
\]
where
\[
\begin{aligned}
&\rho_m:=\|p(v_1)-p(v_m)\|
=\sqrt{\left(\rho_1+\sum\nolimits_{j=2}^{m-1}
\rho_j\cos\theta_j\right)^2
+\left(\sum\nolimits_{j=2}^{m-1}\rho_j\sin\theta_j\right)^2}.
\end{aligned}
\]
We claim $DF(p)$ has rank $m$. Assume this does not hold. For $i=2,\cdots,m-1$, then
\[
\frac{\partial\rho_m}{\partial\theta_i}=
\frac{-\left(\rho_1+\sum\nolimits_{j=2}^{m-1}
\rho_j\cos\theta_j\right)\rho_i\sin\theta_i
+\left(\sum\nolimits_{j=2}^{m-1}\rho_j\sin\theta_j\right)
\rho_i\cos\theta_i}{\rho_m}=0,
\]
which is equivalent to saying the vector
\[
p(v_m)-p(v_1)
=\left(\rho_1+\sum\nolimits_{j=2}^{m-1}\rho_j\cos\theta_j,
\sum\nolimits_{j=2}^{m-1}\rho_j\sin\theta_j\right)
\]
is parallel to
\[
p(v_{i+1})-p(v_i)=\left(\rho_i\cos\theta_i,\rho_i\sin\theta_i\right).
\]
As a result, $p(v_1),p(v_2),\cdots,p(v_m)$ are collinear, contradicting that they are the vertices of a simple polygon.

We thus verify that $DF(p)$ has rank $m$, which concludes
 $\mathcal M^{+}(C_m,\ell)=F^{-1}(\vec{\ell})$ is an embedded smooth submanifold of $\mathcal M^{+}(C_m)$ of dimension $m-3$, as desired.
\end{proof}

\subsection{Polygon triangulations and local coordinates}
Next we introduce a system of local coordinates for the moduli spaces via polygon triangulations. To achieve this, we invoke the following classical result. For a proof, we refer the reader to the celebrated Two Ears Theorem attributed to Meisters~\cite{Meisters-1975}. See also the monograph by Devadoss and O'Rourke~\cite[Theorem 1.4]{Devadoss-2011} for an alternative approach.

\begin{lemma}\label{L-4-2}
Every simple polygon in $\mathbb R^2$ admits a triangulation without adding vertices. Moreover, if the polygon has $m$ vertices, then each of such triangulations has $m-2$ triangles and $m-3$ diagonals.
\end{lemma}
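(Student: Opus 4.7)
The plan is to induct on the number of vertices $m$. The base case $m=3$ is immediate: a triangle is already triangulated, with $1=m-2$ triangle and $0=m-3$ diagonals.

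For the inductive step with $m\geq 4$, the essential geometric input is the existence of a \textbf{diagonal} of $P$, i.e., a segment $[v_i,v_j]$ between two non-adjacent vertices whose relative interior lies in the interior of $P$. I would obtain this by invoking Meisters' Two Ears Theorem, which asserts that every simple polygon with at least four vertices has two \emph{ears}: consecutive triples $v_{k-1},v_k,v_{k+1}$ for which $[v_{k-1},v_{k+1}]$ is a diagonal. A self-contained substitute proceeds by selecting a leftmost (hence convex) vertex $v_k$; if $[v_{k-1},v_{k+1}]$ is not already a diagonal, then some polygon vertex must lie inside the closed triangle $v_{k-1}v_kv_{k+1}$, and choosing such a vertex $v_j$ at maximal perpendicular distance from the line through $v_{k-1}v_{k+1}$ yields a diagonal $[v_k,v_j]$.

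Once a diagonal $[v_i,v_j]$ is in hand, it partitions $P$ into two simple polygons $P_1$ and $P_2$ sharing this segment as a common edge, with $k$ and $m-k+2$ vertices respectively, where $3\leq k\leq m-1$. Since both have strictly fewer than $m$ vertices, the induction hypothesis produces triangulations of $P_1$ and $P_2$ without additional vertices; their union together with $[v_i,v_j]$ is then a triangulation of $P$. The triangle count is $(k-2)+(m-k)=m-2$, and the diagonal count is $(k-3)+(m-k-1)+1=m-3$, closing the induction.

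The principal obstacle is the existence-of-a-diagonal step, which depends on genuinely global features of simple polygons (it ultimately rests on the Jordan curve theorem together with a convexity argument); this is precisely the content of the Two Ears Theorem cited from Meisters. Everything else—the splitting of $P$, the recursion on sub-polygons, and the arithmetic bookkeeping for triangles and diagonals—is then purely routine.
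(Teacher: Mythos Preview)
Your argument is correct and matches the paper's approach: the paper does not give its own proof of this lemma but simply refers the reader to Meisters' Two Ears Theorem and to Devadoss--O'Rourke, which is exactly the diagonal-existence-plus-induction proof you outline. There is nothing to add.
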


\begin{figure}[htbp]
     \centering
\begin{tikzpicture}[scale=0.7]
\coordinate (a) at (0.2,-1.3);
\coordinate (b) at (2.2,0);
\coordinate (c) at (2.4,2.2);
\coordinate (d) at (0.3,1.5);
\coordinate (e) at (-1.9,2.6);
\coordinate (f) at (-2.5,-0.2);
\coordinate (g) at (-1,0.18);
\coordinate (h) at (-1,1);
\coordinate (i) at (0,1.4);
\coordinate (k) at (0,3);
\draw (a)--(b)--(c)--(d)--(e)--(f)--(g)--(a);
\draw[dashed](d)--(b);
\draw[dashed](a)--(d);
\draw[dashed](d)--(g);
\draw[dashed](e)--(g);

\fill (a) circle (2pt);
\fill (b) circle (2pt);
\fill (c) circle (2pt);
\fill (d) circle (2pt);
\fill (e) circle (2pt);
\fill (f) circle (2pt);
\fill (g) circle (2pt);
\node[below] at (1.2,0.8) {$e$};
\draw ([shift={(0.21,-1)}]0:0.4)
arc[radius=0.5, start angle=32, end angle=80];
\draw ([shift={(2.47,2.56)}]-125:0.6)
arc[radius=0.3, start angle=-166, end angle= -106];
\node[below] at (0.7,0) {$\gamma$};
\node[below] at (2,2) {$\beta$};

\end{tikzpicture}
\caption{Polygon triangulation with diagonals marked by dotted lines}
\label{F-2}
\end{figure}
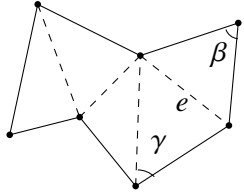

We further present a refinement of Lemma~\ref{L-4-2},  serving as a tool for proving the area-increasing property in the next subsection (see Lemma~\ref{L-4-9}). Suppose
$\mathcal T$ is a triangulation of a simple polygon without adding vertices. A diagonal $e$ in $\mathcal T$ is called  a \textbf{Lawson edge} if $\beta+\gamma\leq \pi$, where $\beta,\gamma$ are the angles opposite $e$ in the two adjacent triangles of $\mathcal T$ (see Fig.~\ref{F-2}).

The lemma below follows from Lawson's Local Optimization Procedure~\cite{Lawson-1984}. Improved results can also be found in the works of De Floriani, Falcidieno and Pienovi~\cite{Floriani-1985}, Lee and Lin~\cite{Lee-1986}, Chew~\cite{Chew-1989}, Chin and Wang~\cite{Chin-1999}, and others.

\begin{lemma}\label{L-4-3}
Every simple polygon in $\mathbb R^2$ admits a triangulation such that no extra vertices are added and all diagonals are Lawson edges.
\end{lemma}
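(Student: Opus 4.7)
The plan is to iterate Lawson's edge-flip procedure starting from an arbitrary initial triangulation until no diagonal violates the Lawson condition. Lemma~\ref{L-4-2} furnishes a triangulation $\mathcal{T}_0$ of the polygon with no added vertex. Given $\mathcal{T}_k$, if some diagonal $e$ of $\mathcal{T}_k$ is not Lawson, so that $\beta+\gamma>\pi$ for the angles $\beta,\gamma$ opposite $e$ in the two adjacent triangles $T_1,T_2$, I would consider the quadrilateral $Q=T_1\cup T_2$. Since the interior angles of a simple quadrilateral sum to $2\pi$, the two remaining angles of $Q$, located at the endpoints of $e$, sum to $2\pi-\beta-\gamma<\pi$, so each is strictly less than $\pi$. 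Hence $Q$ is strictly convex and its other diagonal $e'$ lies entirely inside $Q$, so replacing $e$ by $e'$ produces a valid new triangulation $\mathcal{T}_{k+1}$. Moreover, in $\mathcal{T}_{k+1}$ the angles opposite $e'$ are precisely the two angles of $Q$ at the endpoints of $e$, which sum to $2\pi-\beta-\gamma<\pi$, so the newly inserted diagonal $e'$ is automatically Lawson.

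The delicate step is global termination, because flipping $e$ can destroy the Lawson property at other diagonals that share a triangle with $T_1$ or $T_2$. To handle this I plan to invoke Lawson's classical monotonicity lemma: under each such flip the sorted angle vector of the triangulation (the tuple of all interior triangle angles listed in nondecreasing order) strictly increases in the lexicographic order. Since the vertex set is fixed and finite, the polygon admits only finitely many triangulations without added vertices, so strict monotonicity forces the procedure to halt after finitely many steps at a triangulation $\mathcal{T}_N$ whose every diagonal is Lawson, which is the triangulation required by the lemma.

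The main obstacle is the strict lexicographic monotonicity. Verifying it reduces, via the inscribed-angle theorem applied to the circumcircles of $T_1$ and $T_2$, to the MaxMin angle property of a convex cyclic quadrilateral relative to the Delaunay diagonal, a standard planar-geometry fact; alternatively one can identify $\mathcal{T}_N$ with the constrained Delaunay triangulation of the vertex set, whose diagonals satisfy the Lawson condition by construction. All other ingredients (existence of $\mathcal{T}_0$, legality of the flip, and immediate Lawson-ness of $e'$) reduce to the elementary angle computation above.
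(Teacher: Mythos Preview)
Your proposal is correct and is precisely Lawson's Local Optimization Procedure, which is exactly what the paper invokes: the paper gives no proof of Lemma~\ref{L-4-3} beyond the sentence ``The lemma below follows from Lawson's Local Optimization Procedure~\cite{Lawson-1984}'' together with citations to the constrained Delaunay literature (\cite{Floriani-1985,Lee-1986,Chew-1989,Chin-1999}). Your outline (start from any triangulation via Lemma~\ref{L-4-2}, flip non-Lawson diagonals, and terminate by lexicographic monotonicity of the angle vector or by identifying the limit with the constrained Delaunay triangulation) is the standard argument behind those references, so there is nothing to add.
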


Recall that each $p\in \mathcal M^{+}(C_m)$ gives a simple polygon $p(C_m)$. By Lemma~\ref{L-4-2}, $p(C_m)$ admits a triangulation $\mathcal T$ without adding vertices. Let $U_{\mathcal T}\subset \mathcal M^{+}(C_m)$ be a neighborhood of $p$ such that every $p'\in U_{\mathcal T}$ yields a simple polygon admitting a triangulation of the same combinatorial type as $\mathcal T$. We thus obtain a map
\[
\begin{aligned}
\varphi_{\mathcal T}:\;&U_{\mathcal T}\quad \longrightarrow\qquad\qquad\quad \mathbb R_{+}^{2m-3}\\
&\;p\;\;\quad \longmapsto\;\left(l_1(p),\cdots,l_{m}(p),\lambda_{1}(p),\cdots,\lambda_{m-3}(p)\right),
\end{aligned}
\]
where $l_1(p),\cdots,l_{m}(p)$ denote the side lengths of $p(C_m)$, and $\lambda_{1}(p),\cdots,\lambda_{m-3}(p)$ denote the lengths of the diagonals in the triangulation. Since
$\varphi_{\mathcal T}$ is a diffeomorphism from $U_{\mathcal T}$ to an open subset $\varphi_{\mathcal T}(U_{\mathcal T}) \subset \mathbb R_{+}^{2m-3}$,
$(U_{\mathcal T}, \varphi_{\mathcal T})$ forms a coordinate chart on $\mathcal M^{+}(C_m)$.
Moreover, for another chart $(U_{\mathcal T'}, \varphi_{\mathcal T'})$ with
$U_{\mathcal T}\cap U_{\mathcal T'}\neq \emptyset$,  the transition map
\[
\varphi_{\mathcal T'} \circ\varphi^{-1}_{\mathcal T}~:~\varphi_{\mathcal T}\left(U_{\mathcal T}\cap U_{\mathcal T'}\right)
~\longrightarrow~\varphi_{\mathcal T'}\left(U_{\mathcal T}\cap U_{\mathcal T'}\right)
\]
is smooth. Let $\Lambda$ be the set of combinatorial types of triangulations of all positively oriented simple $m$-gons. Then
$\mathcal A=\left\{(U_{\mathcal T}, \varphi_{\mathcal T})\right\}_{\mathcal T\in \Lambda}$ constitutes a smooth atlas for $\mathcal M^{+}(C_m)$,
with $(l,\lambda):=(l_1,\cdots,l_{m}, \lambda_1,\cdots,\lambda_{m-3})$ as local coordinates. Note that
\[
\mathcal M^{+}(C_m,\ell)=\left\{(l,\lambda)\in\mathcal M^{+}(C_m):l=\vec{\ell}\right\}.
\]
This implies that $\lambda_{1},\cdots,\lambda_{m-3}$ serve as local coordinates for points in $\mathcal M^{+}(C_m,\ell)$.

\subsection{Properties of the area function}
We now explore several properties of the area function.
An element in $\mathcal M^{+}(C_m,\ell)$ (resp. $\mathcal M^{+}(C_m)$) is called a \textbf{cocircular linkage} (resp. \textbf{cocircular configuration}) if it corresponds to a simple polygon inscribed in a circle. By Penner's Theorem~\cite[Theorem 6.2]{Penner-1987}, there exists exactly one cocircular linkage in
$\mathcal M^{+}(C_m,\ell)$. Relevant results also appeared in the works of Schlenker~\cite{Schlenker-2007}, Kou\v{r}imsk\'{a}, Skuppin and Springborn~\cite{Kour-Sku-Springborn-2016}, Xu and Zhou~\cite{Xu-2024}, and many others.

The following proposition, rooted in classical geometry, traces back to early investigations by Moula~\cite{Moula-1737}, L'Huilier~\cite{Huilier-1782}, Steiner~\cite{Steiner-1841}, and  Blaschke~\cite{Blaschke-1956}. For completeness, we provide an independent proof using the local coordinates.

\begin{proposition}\label{P-4-4}
The area function $A:\mathcal M^{+}(C_m,\ell)\to\mathbb R_+$ has the cocircular linkage as its unique critical point.
\end{proposition}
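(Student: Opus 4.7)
The plan is to analyze $A$ in the local coordinates $(\lambda_1,\ldots,\lambda_{m-3})$ furnished by a triangulation $\mathcal T$ of the simple polygon attached to a given linkage. Since the edge lengths are fixed as $\vec\ell$ throughout $\mathcal M^{+}(C_m,\ell)$, the area decomposes as $A=\sum_{t\in\mathcal T}A_t$, where $A_t$ is the Heron area of triangle $t$, depending only on its three side lengths (some from $\vec\ell$ and some from the $\lambda_j$). Each diagonal $\lambda_i$ occurs as a side in exactly two triangles of $\mathcal T$, so the partial derivative $\partial A/\partial\lambda_i$ involves only those two adjacent triangles.

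The first step is to compute these partial derivatives explicitly. Let $\beta_i$ and $\gamma_i$ denote the angles opposite $\lambda_i$ in each of the two adjacent triangles, and let $x,y$ (resp.\ $u,v$) be the other two sides. Differentiating the law of cosines $\lambda_i^2 = x^2+y^2-2xy\cos\beta_i$ gives $\partial\beta_i/\partial\lambda_i = \lambda_i/(xy\sin\beta_i)$, and combined with $A_t=\tfrac12 xy\sin\beta_i$ this yields
\[
\frac{\partial A}{\partial\lambda_i} = \frac{\lambda_i}{2}\bigl(\cot\beta_i+\cot\gamma_i\bigr).
\]
Since $\beta_i,\gamma_i\in(0,\pi)$ (they are interior angles of triangles inside a simple polygon), the critical-point equation $\partial A/\partial\lambda_i=0$ is equivalent to $\sin(\beta_i+\gamma_i)=0$, i.e.\ $\beta_i+\gamma_i=\pi$ for every diagonal $\lambda_i$.

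The second step is to translate this into cocircularity. By the converse of the Inscribed Angle Theorem, $\beta_i+\gamma_i=\pi$ means that the quadrilateral formed by the two triangles adjacent along $\lambda_i$ is cyclic. Fix one triangle $T_0\in\mathcal T$ with circumscribed circle $C$. The dual graph of $\mathcal T$ has $m-2$ nodes and $m-3$ edges, hence is a tree; traversing this tree, each next triangle shares a diagonal with a triangle already inscribed in $C$, so by the cyclic-quadrilateral condition its remaining vertex also lies on $C$. Finite induction yields that all vertices of $p(C_m)$ lie on $C$, so $p$ is cocircular. Conversely, any cocircular $p$ automatically satisfies $\beta_i+\gamma_i=\pi$ for every diagonal (opposite angles of a cyclic quadrilateral sum to $\pi$), and hence is a critical point. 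Therefore the critical set of $A$ coincides with the set of cocircular linkages in $\mathcal M^{+}(C_m,\ell)$, and Penner's theorem~\cite[Theorem 6.2]{Penner-1987} guarantees exactly one such linkage.

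The main obstacle will be to justify rigorously that each $\lambda_i$ can indeed be varied independently while keeping the linkage inside $\mathcal M^{+}(C_m,\ell)$, so that the one-variable calculus above is legitimate. This is precisely the content of the local-coordinate construction preceding the proposition, which is in turn underpinned by the submanifold structure of $\mathcal M^{+}(C_m,\ell)$ established in Lemma~\ref{L-4-1}. Mild additional care is needed to ensure the angles stay strictly in $(0,\pi)$ so that $\cot\beta_i,\cot\gamma_i$ are well-defined; this is automatic because the triangulation of a simple polygon is interior, forcing every triangle angle into $(0,\pi)$.
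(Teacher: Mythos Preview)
Your proof is correct and follows essentially the same route as the paper: both compute $\partial A/\partial\lambda_i=\tfrac{\lambda_i}{2}(\cot\beta_i+\cot\gamma_i)$ in triangulation coordinates, reduce the critical-point equations to $\beta_i+\gamma_i=\pi$ for every diagonal, identify this condition with cocircularity, and invoke Penner's theorem for uniqueness. Your dual-tree induction via the Inscribed Angle Theorem actually supplies a step the paper leaves implicit, namely the passage from ``$\beta_i+\gamma_i=\pi$ for all $i$'' to ``all vertices lie on a common circle.''
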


We first consider some properties of the area function for triangles.
\begin{lemma}\label{L-4-5}
Suppose $\Delta$ is a triangle in $\mathbb R^2$ with side lengths $l_i,l_j,l_k$ and opposite angles $\alpha_i,\alpha_j,\alpha_k$, respectively. Let $A(\Delta)$ denote the area of $\Delta$. The following formulas hold:
\begin{itemize}
\item[$(i)$] $\displaystyle{\frac{\partial A(\Delta)}{\partial l_i}=\frac{l_i\cot\alpha_i}{2}}$;
\item[$(ii)$] $\displaystyle{\frac{\partial^2 A(\Delta)}{\partial l_i^2}=\frac{\cot\alpha_i}{2}
    -\frac{1}{2l_il_jl_k}\left(\frac{l_i}{\sin\alpha_i}\right)^3}$;
\item[$(iii)$] $\displaystyle{\frac{\partial^2 A(\Delta)}{\partial l_i \partial l_j}=\frac{1}{2l_il_jl_k}\left(\frac{l_i}{\sin\alpha_i}\right)^3\cos\alpha_k}$.
\end{itemize}
\end{lemma}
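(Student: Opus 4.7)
\medskip

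The plan is to work with the elementary area formula $A(\Delta)=\tfrac12 l_j l_k \sin\alpha_i$, which makes the side $l_i$ opposite to $\alpha_i$ play the role of a ``free variable,'' and to differentiate using implicit relations that describe how the angles depend on the side lengths when $l_j, l_k$ are held fixed (or one of them is varied with $l_i, l_k$ fixed). The key infinitesimal identity comes from the law of cosines
\[
l_i^2 = l_j^2 + l_k^2 - 2 l_j l_k \cos\alpha_i,
\]
which upon differentiating with respect to $l_i$ (keeping $l_j, l_k$ fixed) yields
\[
\frac{\partial \alpha_i}{\partial l_i} = \frac{l_i}{l_j l_k \sin\alpha_i},
\]
and upon differentiating with respect to $l_j$ (keeping $l_i, l_k$ fixed) yields
\[
\frac{\partial \alpha_i}{\partial l_j} = \frac{l_k\cos\alpha_i - l_j}{l_j l_k \sin\alpha_i} = -\frac{l_i \cos\alpha_k}{l_j l_k \sin\alpha_i},
\]
where the second equality uses the classical projection identity $l_j = l_k \cos\alpha_i + l_i \cos\alpha_k$.

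For \textbf{(i)}, I would substitute the first identity into $\partial A/\partial l_i = \tfrac12 l_j l_k \cos\alpha_i\,(\partial\alpha_i/\partial l_i)$, which collapses to $\tfrac12 l_i \cot\alpha_i$. For \textbf{(ii)}, differentiating the result of (i) again in $l_i$ produces
\[
\frac{\partial^2 A}{\partial l_i^2} = \frac{\cot\alpha_i}{2} - \frac{l_i}{2\sin^2\alpha_i}\cdot\frac{\partial \alpha_i}{\partial l_i}
= \frac{\cot\alpha_i}{2} - \frac{l_i^2}{2 l_j l_k \sin^3\alpha_i},
\]
and it only remains to rewrite the second term as $\tfrac{1}{2 l_i l_j l_k}(l_i/\sin\alpha_i)^3$, which is a purely algebraic rearrangement. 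For \textbf{(iii)}, differentiating (i) with respect to $l_j$ and substituting the second identity together with the projection formula gives the stated expression after the same rearrangement, with the $\cos\alpha_k$ factor emerging exactly from the projection identity.

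No step is genuinely difficult, so there is no real obstacle; the mildly delicate point is bookkeeping the partial derivatives, since changing one side length with the other two fixed forces \emph{all three} angles to vary, making the formula $A=\tfrac12 l_j l_k \sin\alpha_i$ (in which $l_j, l_k$ are held fixed as parameters) the most convenient starting point. An alternative route via Heron's formula $16A^2 = 2l_i^2 l_j^2 + 2 l_j^2 l_k^2 + 2 l_i^2 l_k^2 - l_i^4 - l_j^4 - l_k^4$ gives (i) after invoking $l_j^2+l_k^2-l_i^2 = 2l_j l_k\cos\alpha_i$, and one could obtain (ii) and (iii) similarly by direct differentiation; I mention this merely as a sanity check, but prefer the trigonometric derivation because the $\cot\alpha_i$, $\sin\alpha_i$, and $\cos\alpha_k$ terms then appear without any further manipulation.
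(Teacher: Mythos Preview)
Your approach is correct and essentially identical to the paper's own proof: the paper also starts from $A(\Delta)=\tfrac12 l_j l_k \sin\alpha_i$, computes $\partial\alpha_i/\partial l_i$ and $\partial\alpha_i/\partial l_j$ from the law of cosines (arriving at exactly your two identities), and then substitutes to obtain (i)--(iii). The only cosmetic difference is that the paper states the formula $\partial\alpha_i/\partial l_j = -\,l_i\cos\alpha_k/(l_j l_k \sin\alpha_i)$ directly rather than passing through the projection identity, but this is the same computation.
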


\begin{remark}\label{R-4-6}
Set $\displaystyle{\Upsilon_{ijk}=\frac{1}{2l_il_jl_k}
\left(\frac{l_i}{\sin\alpha_i}\right)^3}$.
By the Law of Sines,
\[
\Upsilon_{ijk}=\Upsilon_{jki}=\Upsilon_{kij}:=\Upsilon(\Delta).
\]
We present the following equivalent versions of the latter two formulas:
\begin{itemize}
\item[$(ii')$] $\displaystyle{\frac{\partial^2 A(\Delta)}{\partial l_i^2}=\frac{\cot\alpha_i}{2}-\Upsilon(\Delta)}$;
\item[$(iii')$] $\displaystyle{\frac{\partial^2 A(\Delta)}{\partial l_i \partial l_j}=\Upsilon(\Delta)\cos\alpha_k}$.
\end{itemize}
\end{remark}

The proof is based on direct computations, with details deferred to~\S\ref{S-7}. Using this lemma, we are ready to show Proposition~\ref{P-4-4}.

\begin{proof}[\textbf{Proof of Proposition~\ref{P-4-4}}]
We assume $m\geq 4$, as the case $m=3$ is trivial. For each $p\in \mathcal M^{+}(C_m,\ell)$, let $\mathcal T$ be a triangulation of the polygon $p(C_m)$ without adding vertices. Suppose $\lambda_1,\cdots,\lambda_{m-3}$ are the lengths of the diagonals in the triangulation.  For $i=1,\cdots,m-3$, it follows from Lemma~\ref{L-4-5} that
\begin{equation}\label{E-4-1}
\frac{\partial A}{\partial \lambda_i}=\frac{\lambda_i}{2}\left(\cot\beta_i+\cot\gamma_i\right)
=\frac{\lambda_i\sin(\beta_i+\gamma_i)}{2\sin \beta_i\sin\gamma_i},
\end{equation}
where $\beta_i, \gamma_i$ are the angles opposite the  $i$-th diagonal in the two adjacent triangles of $\mathcal T$. At the cocircular linkage, we have
\begin{equation}\label{E-4-2}
\beta_i+\gamma_i=\pi,
\end{equation}
which yields
\begin{equation}\label{E-4-3}
\frac{\partial A}{\partial \lambda_i}=0.
\end{equation}
Hence the cocircular linkage is a critical point of the area function.

Conversely, any critical point of $A$ must satisfy~\eqref{E-4-3}, which is equivalent to~\eqref{E-4-2} and  is fulfilled exclusively by the cocircular linkage. Since the cocircular linkage in $\mathcal M^{+}(C_m,\ell)$ is unique, the proposition follows.
\end{proof}

\begin{remark}
The area function actually attains its maximum at the cocircular linkage. This property can be derived using Steiner's idea of ``mirroring the dent''~\cite{Steiner-1838} combined with a compactness argument.
\end{remark}

The following proposition asserts the non-degeneracy of the critical point, which in turn enables us to establish the smoothness of the projection map in the next subsection (see Proposition~\ref{P-4-14}).

\begin{proposition}\label{P-4-8}
Suppose $m\geq 4$. The Hessian of $A:\mathcal M^{+}(C_m,\ell)\to \mathbb R_+$ at the critical point is negative definite.
\end{proposition}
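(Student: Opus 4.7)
The plan is to compute the Hessian in the triangulation coordinates developed above, exploit the cocircular condition $\beta_i+\gamma_i=\pi$ (which already made first derivatives vanish in Proposition~\ref{P-4-4}) to reduce the quadratic form to a manifestly non-positive expression, and then kill the kernel by a fan-triangulation induction.

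For the setup, I would take the (unique) cocircular linkage $p_\ast$ and fan-triangulate $p_\ast(C_m)$ from a vertex $v_1$: diagonals $\lambda_i=v_1v_{i+2}$ ($i=1,\ldots,m-3$) partition $p_\ast(C_m)$ into triangles $T_i=v_1v_{i+1}v_{i+2}$ ($i=1,\ldots,m-2$). Convexity of $p_\ast(C_m)$, needed for the fan to be admissible, is automatic: the vertices of any simple polygon inscribed in a circle must occur in cyclic order around the circle. Then $(\lambda_1,\ldots,\lambda_{m-3})$ furnish local coordinates on $\mathcal M^{+}(C_m,\ell)$ at $p_\ast$.

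Writing $A=\sum_k A(T_k)$ and applying formulas $(ii')$ and $(iii')$ of Lemma~\ref{L-4-5} to each triangle, a routine calculation gives, for a tangent vector $v=(u_{\lambda_1},\ldots,u_{\lambda_{m-3}})$ (with $u_e:=0$ on boundary edges),
\[
\langle Hv,v\rangle = \tfrac{1}{2}\sum_{i=1}^{m-3}(\cot\beta_i+\cot\gamma_i)\,u_{\lambda_i}^2 - \sum_{k=1}^{m-2}\Upsilon(T_k)\,|\vec w_{T_k}|^2,
\]
where $\vec w_T:=\sum_{e\in T}\hat e_e\,u_e$ and $\hat e_e$ is the unit vector along side $e$ consistently oriented around $T$. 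The key algebraic step rewrites the off-diagonal contribution from $(iii')$ via the identity
\[
\sum_i u_i^2 - 2\sum_{i<j}\cos\alpha_k\,u_iu_j = |\hat e_au_a+\hat e_bu_b+\hat e_cu_c|^2,
\]
which follows from $\hat e_i\cdot\hat e_j=-\cos\alpha_k$ (the angle at the common vertex of sides $i,j$). At $p_\ast$, each $\lambda_i$ is a diagonal of the cyclic quadrilateral $v_1v_{i+1}v_{i+2}v_{i+3}$, so opposite-angle supplementarity gives $\beta_i+\gamma_i=\pi$; the first sum vanishes and $\langle Hv,v\rangle=-\sum_k\Upsilon(T_k)|\vec w_{T_k}|^2\le 0$.

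The main obstacle is upgrading semi-definiteness to strict definiteness. Since $\Upsilon(T)>0$ on any non-degenerate triangle, the kernel condition forces $\vec w_{T_k}=0$ for every $k$, and the fan structure handles this inductively. The ear triangle $T_1=v_1v_2v_3$ has only $\lambda_1$ as a non-boundary side, so $\vec w_{T_1}=\hat e_{\lambda_1}u_{\lambda_1}=0$ forces $u_{\lambda_1}=0$. Assuming $u_{\lambda_1}=\cdots=u_{\lambda_{i-1}}=0$, the triangle $T_i$ has non-boundary sides $\lambda_{i-1},\lambda_i$, and $\vec w_{T_i}=0$ together with the hypothesis gives $\hat e_{\lambda_i}u_{\lambda_i}=0$, hence $u_{\lambda_i}=0$. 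Thus $v=0$, establishing negative definiteness. (If convexity of $p_\ast$ were in doubt, Meisters' Two Ears Theorem would allow the same ``peel an ear'' argument on any triangulation of any simple polygon.)
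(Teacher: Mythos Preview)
Your argument is correct but takes a genuinely different route from the paper's. Both proofs use the fan triangulation and the cocircular identity $\beta_i+\gamma_i=\pi$, but the paper proceeds by computing the Hessian entries explicitly: the matrix is tridiagonal with $\partial^2 A/\partial\lambda_i^2=-\Upsilon(\Delta_i)-\Upsilon(\Delta_{i+1})$ and $\partial^2 A/\partial\lambda_i\partial\lambda_{i+1}=\Upsilon(\Delta_{i+1})\cos\phi_{i+1}$, and then observes that since $|\cos\phi_k|<1$ the matrix is strictly diagonally dominant with negative diagonal, hence negative definite in one stroke. Your approach instead packages the second-order terms via the vector identity $\sum u_i^2-2\sum_{i<j}\cos\alpha_k\,u_iu_j=|\sum \hat e_i u_i|^2$, yielding the manifestly non-positive form $-\sum_k\Upsilon(T_k)|\vec w_{T_k}|^2$, and then eliminates the kernel by an ear-peeling induction. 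The paper's route is shorter and avoids the separate kernel analysis; yours is more geometric and, as you note, generalizes to arbitrary triangulations (not just fans) via the Two Ears Theorem, which is a nice robustness the diagonal-dominance argument does not obviously share.
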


\begin{proof}
By virtue of Proposition~\ref{P-4-4}, the only critical point of $A$ is the cocircular  linkage $q$, which yields a simple polygon $q(C_m)$ inscribed in a circle. This implies $q(C_m)$ is a strictly convex polygon, admitting a triangulation $\mathcal T$ with diagonals connecting vertices $v_{m-1}$ and $v_i$ for $i=1,\cdots,m-3$. As shown in Fig.~\ref{F-3}, we introduce the following notations:
\begin{itemize}
\item{} For $i=1,\cdots,m-3$, let $\lambda_i$ denote the length of the diagonal in $\mathcal T$ between vertices $v_{m-1}$ and $v_i$;
\item{} For $j=1,\cdots,m-2$, let $\Delta_j$ be the triangle in $\mathcal T$ with vertices $v_{j-1},v_{j},v_{m-1}$ (where $v_0:=v_m$), and let $\sigma_j,\delta_j,\phi_j$ represent the angles of  $\Delta_j$ at vertices $v_{j-1},v_j,v_{m-1}$, respectively.
\end{itemize}

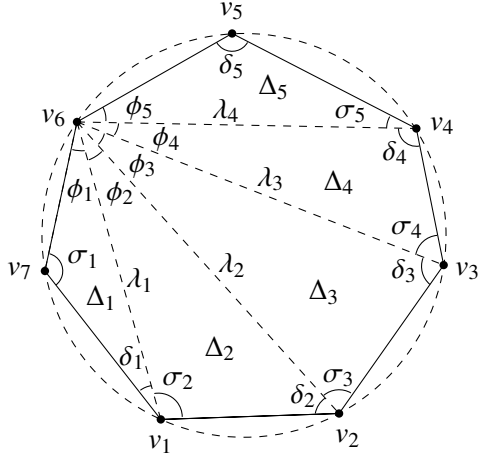
\begin{figure}[htbp]
     \centering
\begin{tikzpicture}[scale=0.79]
\coordinate (b) at (1.6,-2.5);
\coordinate (c) at (3.36,-0);
\coordinate (d) at (2.9,2.3);
\coordinate (e) at (-0.2,3.9);
\coordinate (f) at (-2.81,2.41);
\coordinate (g) at (-3.35,-0.1);
\coordinate (a) at (-1.4,-2.6);

\draw (a)--(b)--(c)--(d)--(e)--(f)--(g)--(a)--(b);

\draw[dashed](f)--(b);
\draw[dashed](f)--(a);
\draw[dashed](f)--(d);
\draw[dashed](f)--(g);
\draw[dashed](f)--(c);

\fill (a) circle (2pt);
\fill (b) circle (2pt);
\fill (c) circle (2pt);
\fill (d) circle (2pt);
\fill (e) circle (2pt);
\fill (f) circle (2pt);
\fill (g) circle (2pt);
\node[below] at (-1.4,-2.7) {$v_1$};
\draw ([shift={(-1.4,-2.6)}]125:0.6) arc[radius=0.4, start angle=127, end angle= 98];
\draw ([shift={(-1.4,-2.6)}]104:0.45) arc[radius=0.4, start angle=100, end angle= 0];
\node[below] at (1.75,-2.6) {$v_2$};
\draw ([shift={(1.6,-2.5)}]180:0.4) arc[radius=0.4, start angle=170, end angle= 125];
\draw ([shift={(1.6,-2.5)}]130:0.4) arc[radius=0.4, start angle=130, end angle= 60];
\node[right] at (3.39,-0.1) {$v_3$};
\draw ([shift={(3.36,-0)}]235:0.4) arc[radius=0.4, start angle=230, end angle= 160];
\draw ([shift={(3.36,-0)}]160:0.5) arc[radius=0.4, start angle=170, end angle= 95];
\node[right] at (2.96,2.35) {$v_4$};
\draw ([shift={(2.9,2.3)}]-80:0.3) arc[radius=0.3, start angle=-80, end angle= -180];
\draw ([shift={(2.9,2.3)}]180:0.5) arc[radius=0.4, start angle=180, end angle= 150];
\node[above] at (-0.2,3.96) {$v_5$};
\draw ([shift={(-0.2,3.9)}]-30:0.3) arc[radius=0.3, start angle=-30, end angle= -150];
\node[left] at (-2.82,2.48) {$v_6$};
\draw ([shift={(-2.81,2.41)}]30:0.5) arc[radius=0.4, start angle=30, end angle= -5];
\draw ([shift={(-2.81,2.41)}]-2:0.7) arc[radius=0.4, start angle=-5, end angle=-35];
\draw ([shift={(-2.81,2.41)}]-23:0.5) arc[radius=0.4, start angle=-14, end angle=-44];
\draw ([shift={(-2.81,2.41)}]-50:0.65) arc[radius=0.6, start angle=-45, end angle=-70];
\draw ([shift={(-2.81,2.41)}]-75:0.5) arc[radius=0.6, start angle=-85, end angle=-105];
\node[left] at (-3.39,-0.1) {$v_7$};
\draw ([shift={(-3.35,-0.1)}]75:0.3) arc[radius=0.3, start angle=75, end angle=-52];
\node at (-2.74,1.25) {$\phi_1$};
\node at (-2.08,1.2) {$\phi_2$};
\node[below] at (-1.7,2.05) {$\phi_3$};
\node at (-1.3,2.12) {$\phi_4$};
\node at (-1.75,2.65) {$\phi_5$};

\node at (-2.4,-0.55) {$\Delta_1$};
\node at (-0.4,-1.4) {$\Delta_2$};
\node at (1.35,-0.45) {$\Delta_3$};
\node at (1.6,1.4) {$\Delta_4$};
\node at (0.5,3) {$\Delta_5$};

\node at (-1.88,-1.56) {$\delta_1$};
\node at (1,-2.2) {$\delta_2$};
\node at (2.7,-0.1) {$\delta_3$};
\node at (2.55,1.9) {$\delta_4$};
\node at (-0.2,3.35) {$\delta_5$};
\node at (-2.7,0.1) {$\sigma_1$};
\node at (-1.1,-1.95) {$\sigma_2$};
\node at (1.6,-1.9) {$\sigma_3$};
\node at (2.75,0.6) {$\sigma_4$};
\node at (1.8,2.55) {$\sigma_5$};
\node at (-1.75,-0.25) {$\lambda_1$};
\node at (-0.2,0) {$\lambda_2$};
\node at (0.45,1.45) {$\lambda_3$};
\node at (-0.3,2.6) {$\lambda_4$};

\draw[dashed] (0,0.5) circle (3.4);

\end{tikzpicture}
\caption{A cocircular heptagon with a triangulation}
\label{F-3}
\end{figure}

Recall that $\lambda_1,\cdots,\lambda_{m-3}$ serve as local coordinates. From Lemma~\ref{L-4-5} and Remark~\ref{R-4-6}, it follows that
\[
\frac{\partial^2 A}{\partial \lambda_i^2}=\frac{\partial^2 A(\Delta_i)}{\partial \lambda_i^2}+\frac{\partial^2 A(\Delta_{i+1})}{\partial \lambda_i^2}=\frac{\sin(\sigma_i+\delta_{i+1})}{2\sin\sigma_i\sin\delta_{i+1}}-\Upsilon(\Delta_i)-\Upsilon(\Delta_{i+1}).
\]
At the cocircular linkage $q$, we have $\sigma_i+\delta_{i+1}=\pi$, which yields
\[
\frac{\partial^2 A}{\partial \lambda_i^2}\big(q\big)=-\Upsilon(\Delta_i)-\Upsilon(\Delta_{i+1})<0.
\]
For $1\leq j<k\leq m-3$, Lemma~\ref{L-4-5} and Remark~\ref{R-4-6} also imply that
\[
\frac{\partial^2 A}{\partial \lambda_j\partial \lambda_k}\big(q\big)
=\begin{cases}
\begin{aligned}
&\Upsilon(\Delta_{j+1})\cos\phi_{j+1},&k-j=1,\\
&0,  &k-j>1.
\end{aligned}
\end{cases}
\]
Therefore,
\[
\left|\frac{\partial^2 A}{\partial \lambda_i^2}\big(q\big)\right|-\sum_{j\neq i}\left|\frac{\partial^2 A}{\partial \lambda_i\partial \lambda_j}\big(q\big)\right|>0.
\]
In summary, the Hessian of $A$ at $q$ is a strictly diagonally dominate matrix with negative diagonal entries, and hence negative definite.
\end{proof}

The following area-increasing property was established by Connelly, Demaine and Rote~\cite[Theorem 7]{Connelly-2003}. Here we provide a simpler proof based on Lemma~\ref{L-4-3}.

\begin{lemma}[Connelly-Demaine-Rote]\label{L-4-9}
Assume $\{p_t\}_{-\varepsilon<t<\varepsilon}$ $(\varepsilon>0)$ is an expansive motion in $\mathcal M^{+}(C_m,\ell)$. Then
\[
\frac{\mathrm{d}A(p_t)}{\mathrm{d}t}\geq0,\;\forall\, t\in (-\varepsilon,\varepsilon).
\]
\end{lemma}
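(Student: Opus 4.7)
The plan is to exploit the all-Lawson triangulation produced by Lemma~\ref{L-4-3} together with the diagonal-derivative formula~\eqref{E-4-1} from the proof of Proposition~\ref{P-4-4}. First, I would fix an arbitrary $t_0\in(-\varepsilon,\varepsilon)$ and apply Lemma~\ref{L-4-3} to the simple polygon $p_{t_0}(C_m)$ to obtain a triangulation $\mathcal T$ using no additional vertices, such that every diagonal of $\mathcal T$ is a Lawson edge of $p_{t_0}(C_m)$; that is, for each diagonal the two opposite angles $\beta_i,\gamma_i$ in the adjacent triangles satisfy $\beta_i+\gamma_i\leq\pi$. Since simplicity of the polygon and non-crossing of fixed diagonals are both open conditions, a triangulation of the same combinatorial type persists on some neighborhood of $t_0$, and the diagonal lengths $\lambda_1(p_t),\dots,\lambda_{m-3}(p_t)$ thereby become smooth functions of $t$ near $t_0$.

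Next, I would differentiate the area at $t=t_0$ by the chain rule, treating the fixed side lengths $\ell_1,\dots,\ell_m$ as constants and the diagonal lengths as the only varying quantities. Formula~\eqref{E-4-1} gives
\[
\frac{\partial A}{\partial \lambda_i}(p_{t_0})=\frac{\lambda_i\sin(\beta_i+\gamma_i)}{2\sin\beta_i\sin\gamma_i},
\]
which is non-negative precisely because $\mathcal T$ consists of Lawson edges at $t_0$. On the other hand, each $\lambda_i(p_t)$ is the Euclidean distance between a pair of non-adjacent vertices of the linkage, so the expansiveness hypothesis yields $\mathrm{d}\lambda_i(p_t)/\mathrm{d}t\big|_{t=t_0}\geq 0$. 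Combining these non-negativities through
\[
\frac{\mathrm{d}A(p_t)}{\mathrm{d}t}\bigg|_{t=t_0}=\sum_{i=1}^{m-3}\frac{\partial A}{\partial \lambda_i}(p_{t_0})\cdot\frac{\mathrm{d}\lambda_i(p_t)}{\mathrm{d}t}\bigg|_{t=t_0}\geq 0,
\]
and letting $t_0$ range over $(-\varepsilon,\varepsilon)$ delivers the lemma.

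The main obstacle I anticipate is conceptual rather than technical: the triangulation $\mathcal T$ depends on the choice of $t_0$, since different polygons along the motion may require different all-Lawson triangulations, so there is no single global coordinate system on $\mathcal M^{+}(C_m,\ell)$ in which the computation runs uniformly. This is not a genuine difficulty because the statement is pointwise in $t$---for each $t_0$ one only needs the sign of the area derivative at that single instant, which the Lawson triangulation of $p_{t_0}(C_m)$ certifies locally. Once this is recognized, the argument reduces to Lemma~\ref{L-4-3} together with the elementary identity $\cot\beta+\cot\gamma=\sin(\beta+\gamma)/(\sin\beta\sin\gamma)$, which is precisely why this route should be substantially shorter than the original proof of Connelly--Demaine--Rote.
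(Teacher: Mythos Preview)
Your proposal is correct and follows essentially the same route as the paper's own proof: apply Lemma~\ref{L-4-3} to obtain an all-Lawson triangulation, use formula~\eqref{E-4-1} to see that each $\partial A/\partial\lambda_i\geq 0$, combine with $\mathrm{d}\lambda_i/\mathrm{d}t\geq 0$ from expansiveness, and sum via the chain rule. Your explicit fixing of $t_0$ and remark that the triangulation may vary with $t_0$ merely spell out what the paper leaves implicit.
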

\begin{proof}
By Lemma~\ref{L-4-3}, we obtain a triangulation $\mathcal T$ of $p_t(C_m)$ with no extra vertices added and all diagonals being Lawson edges. For $i=1,\cdots,m-3$, let $\lambda_{i}$ be the length of the $i$-th diagonal in $\mathcal T$, and let $\beta_i, \gamma_i$ be the angles opposite the $i$-th diagonal in the two adjacent triangles of
$\mathcal T$. Since all diagonals in $\mathcal T$ are Lawson edges, it follows that
$\beta_i+\gamma_i\leq \pi$. Combining this with~\eqref{E-4-1}, we derive
\[
\frac{\partial A}{\partial \lambda_i}=\frac{\lambda_i\sin(\beta_i+\gamma_i)}{2\sin \beta_i\sin \gamma_i}\geq 0.
\]
Recall that $\{p_t\}_{-\varepsilon<t<\varepsilon}$ is an expansive motion. Then
\[
\frac{\mathrm{d}\lambda_i}{\mathrm{d}t}\geq 0.
\]
A direct computation yields
\[
\frac{\mathrm{d}A(p_t)}{\mathrm{d}t}=\sum\nolimits_{i=1}^{m-3}\frac{\partial A}{\partial \lambda_i}\frac{\mathrm{d}\lambda_i}{\mathrm{d}t}\geq 0.
\]
\end{proof}

\subsection{Constructing Lyapunov-Reeb functions} Remember that each $p\in \mathcal M^{+}(C_m)$ corresponds to a simple polygon $p(C_m)$ in $\mathbb R^2$, with $\alpha_i(p)$ denoting the interior angle at vertex $p(v_i)$. As mentioned in~\S~\ref{S-1},  constructing L-R functions on $\mathcal M^{+}(C_m,\ell)$ requires analyzing the function
\[
w=\frac{1}{m}\left(\sum\nolimits_{i=1}^m w_i\right),
\]
where
\[
w_i(p)=
\begin{cases}
\begin{aligned}
&\exp\left(\frac{1}{\pi-\alpha_i(p)}\right), &&\alpha_i(p)>\pi,\\
&0, &&\alpha_i(p)\leq\pi.
\end{aligned}
\end{cases}
\]

Note that $w(p)=0$ if $p\in \mathcal M^{+}(C_m,\ell)$ is a \textbf{convex cycle linkage} (i.e., $p$ corresponds to a convex polygon). The following lemma is due to Shimamoto and Wootters~\cite{Shimamoto-2014}. For the sake of self-containment, we include a proof.

\begin{lemma}[Shimamoto-Wootters]\label{L-4-10}
Assume $\{p_t\}_{-\varepsilon<t<\varepsilon}$ $(\varepsilon>0)$ is an expansive motion in $\mathcal M^{+}(C_m,\ell)$. Then
\[
\frac{\mathrm{d}w(p_t)}{\mathrm{d}t}\bigg|_{t=0}\leq0.
\]
\end{lemma}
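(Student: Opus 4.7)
The plan is to bound $\frac{\mathrm{d} w(p_t)}{\mathrm{d} t}\big|_{t=0}$ term-by-term via the decomposition $w=\frac{1}{m}\sum_i w_i$, showing that $\frac{\mathrm{d} w_i(p_t)}{\mathrm{d} t}\big|_{t=0}\leq 0$ for every vertex $v_i$. First I would verify that the one-variable function $s\mapsto \exp(1/(\pi-s))$ on $s>\pi$, extended by $0$ for $s\leq \pi$, is $C^\infty$ at $s=\pi$ (the standard bump-function calculation: all one-sided derivatives vanish). Composing with the smooth angle function $\alpha_i:\mathcal M^{+}(C_m,\ell)\to\mathbb R$ yields smoothness of each $w_i$, together with the chain-rule identity
\[
\frac{\mathrm{d} w_i(p_t)}{\mathrm{d} t}=\begin{cases}\dfrac{w_i(p_t)}{(\pi-\alpha_i(p_t))^2}\cdot\dfrac{\mathrm{d}\alpha_i(p_t)}{\mathrm{d} t},&\alpha_i(p_t)>\pi,\\[4pt] 0,&\alpha_i(p_t)\leq \pi.\end{cases}
\]
At any vertex with $\alpha_i(p_0)\leq\pi$ the derivative at $t=0$ is therefore zero automatically (using flatness at $s=\pi$ when $\alpha_i(p_0)=\pi$). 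Since the prefactor in the first case is non-negative, it suffices to prove $\frac{\mathrm{d}\alpha_i(p_t)}{\mathrm{d} t}\big|_{t=0}\leq 0$ at every \emph{reflex} vertex $v_i$, i.e., one with $\alpha_i(p_0)>\pi$.

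For such a reflex vertex I would analyse the triangle with vertices $p_t(v_{i-1}),p_t(v_i),p_t(v_{i+1})$. Set $a=\ell([v_{i-1},v_i])$, $b=\ell([v_i,v_{i+1}])$, and $c(t)=\|p_t(v_{i-1})-p_t(v_{i+1})\|$, and let $\theta_i(t)\in(0,\pi)$ denote the triangle angle at $p_t(v_i)$. Because $v_i$ is reflex at $t=0$, the relation $\alpha_i=2\pi-\theta_i$ persists in a neighborhood of $t=0$. Two features are crucial: $a$ and $b$ are \emph{constant} in $t$, since we are in the linkage moduli space $\mathcal M^{+}(C_m,\ell)$; and the expansive-motion hypothesis, applied to the pair $(v_{i-1},v_{i+1})$ (non-adjacent in $C_m$ once $m\geq 4$), gives $\dot c(t)\geq 0$. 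Differentiating the law of cosines $c^2=a^2+b^2-2ab\cos\theta_i$ under $\dot a=\dot b=0$ now yields
\[
\dot\theta_i(t)=\frac{c(t)\,\dot c(t)}{ab\,\sin\theta_i(t)}\geq 0,
\]
using that $\sin\theta_i>0$ on $(0,\pi)$. Hence $\dot\alpha_i(t)=-\dot\theta_i(t)\leq 0$, and summing over $i$ at $t=0$ delivers the lemma. (For $m=3$ every interior angle is less than $\pi$, so $w\equiv 0$ and the claim is trivial.)

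The main obstacle, though ultimately modest, is the $C^\infty$-regularity of $w_i$ at $\alpha_i=\pi$ (a standard bump-function check) together with the recognition that the fixed-edge-length condition is exactly what reduces the problem to a single one-triangle law-of-cosines computation. If one attempted the same vertex-wise bound on $\mathcal M^{+}(C_m)$, the derivatives $\dot a,\dot b$ would reappear in $\dot\theta_i$ with no a priori sign, and the inequality would fail in general; this is presumably why Theorem~\ref{T-1-12} weights $w$ against a strain energy $\Phi$ rather than using $w$ alone.
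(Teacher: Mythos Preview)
Your proof is correct and follows essentially the same route as the paper: both argue vertex by vertex, dispose of the case $\alpha_i(p_0)\leq\pi$ by flatness of the bump function, and for reflex vertices differentiate the law of cosines in the triangle $p_t(v_{i-1})p_t(v_i)p_t(v_{i+1})$ with the two adjacent side lengths fixed and the diagonal non-decreasing. The only cosmetic difference is that you pass to the triangle angle $\theta_i=2\pi-\alpha_i\in(0,\pi)$ and use $\sin\theta_i>0$, whereas the paper differentiates directly in $\alpha_i$ and implicitly uses $\sin\alpha_i<0$ for $\alpha_i>\pi$; the two computations are equivalent.
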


\begin{proof}
To establish the lemma, we proceed by showing that
\[
\frac{\mathrm{d}w_i(p_t)}{\mathrm{d}t}\bigg|_{t=0}\leq0
\]
for $i=1,\cdots,m$. If $\alpha_i(p_0)\leq \pi$, it is straightforward to see
\[
\frac{\mathrm{d}w_i(p_t)}{\mathrm{d}t}\bigg|_{t=0}=0.
\]
For $\alpha_i(p_0)>\pi$, the Law of Cosines implies
\[
\frac{\mathrm{d}\alpha_i(p_t)}{\mathrm{d}t}
=\frac{\|p_t(v_{i-1})-p_t(v_{i+1})\|}{l_il_{i+1}\sin \alpha_i(p_t)}\cdot\frac{\mathrm{d}}{\mathrm{d}t}~\|p_t(v_{i-1})-p_t(v_{i+1})\|,
\]
where $l_i=\ell\big([v_{i-1},v_{i}]\big)$, $l_{i+1}=\ell\big([v_{i},v_{i+1}]\big)$. Since $\{p_t\}_{-\varepsilon<t<\varepsilon}$ $(\varepsilon>0)$ is an expansive motion, the distance $\|p_t(v_{i-1})-p_t(v_{i+1})\|$ is non-decreasing, which yields
\[
\frac{\mathrm{d}\alpha_i(p_t)}{\mathrm{d}t}\bigg|_{t=0}\leq 0.
\]
As a result,
\[
\frac{\mathrm{d}w_i(p_t)}{\mathrm{d}t}\bigg|_{t=0}
=\exp\left(\frac{1}{\pi-\alpha_i(p_0)}\right)
\left(\frac{1}{\pi-\alpha_i(p_0)}\right)^2
\frac{\mathrm{d}\alpha_i(p_t)}{\mathrm{d}t}\bigg|_{t=0}\leq 0.
\]
This completes the proof.
\end{proof}

Furthermore, we invoke the following fundamental result due to Connelly, Demaine and Rote~\cite[Theorem 3]{Connelly-2003}. It is worth mentioning that an independent proof can be found in the work of Rote, Santos and Streinu~\cite[Theorem 4.3]{Rote-2003}.
\begin{theorem}[Connelly-Demaine-Rote]\label{T-4-11}
If $p\in \mathcal M^{+}(C_m,\ell)$ is not a convex cycle linkage, then there exists an expansive motion $\{p_t\}_{-\varepsilon<t<\varepsilon}$ ($\varepsilon>0$) in $\mathcal M^{+}(C_m,\ell)$ such that $p_0=p$.
\end{theorem}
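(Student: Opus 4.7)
The plan is to establish the theorem in three stages: reduce to a finite-dimensional linear programming problem via linearization, resolve this LP by ruling out dual ``expansive stresses'' on non-convex polygons, and integrate the resulting infinitesimal motion into a smooth family of linkages.

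First, I would linearize the problem. An infinitesimal expansive motion at $p$ is a collection of velocity vectors $\{v_i\}_{i=1}^{m}$ at $\{p(v_i)\}$ satisfying the length-preservation equations
\[
\langle v_i-v_j,\,p(v_i)-p(v_j)\rangle=0\quad\text{for each edge }[v_i,v_j]\in E,
\]
together with the expansive inequalities
\[
\langle v_i-v_j,\,p(v_i)-p(v_j)\rangle\geq 0\quad\text{for each non-edge pair},
\]
with at least one strict. Modulo the three-dimensional subspace of infinitesimal rigid motions, these tuples form a closed convex cone $C(p)$ in a finite-dimensional quotient, and the task reduces to showing $C(p)\neq\{0\}$ whenever $p(C_m)$ is non-convex.

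Second, I would apply Farkas' Lemma. If $C(p)=\{0\}$, then there must exist non-negative ``strut coefficients'' $\omega_{ij}\geq 0$ on non-edge pairs, not all zero, and real ``bar coefficients'' $\tau_{ij}$ on edges, satisfying the vertex equilibrium equations
\[
\sum_{i:[v_i,v_k]\notin E}\omega_{ik}\bigl(p(v_i)-p(v_k)\bigr)+\sum_{j:[v_j,v_k]\in E}\tau_{jk}\bigl(p(v_j)-p(v_k)\bigr)=\mathbf{0},\quad k=1,\dots,m.
\]
The main obstacle is to prove that no such \emph{expansive self-stress} exists on a non-convex simple polygon. The sharpest route uses the Maxwell-Cremona correspondence: a planar self-stress lifts to a piecewise-linear height function whose convex/concave creases correspond to positive/negative stress coefficients. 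Strut non-negativity then forces every non-edge of $p(C_m)$ to lift as a convex crease, and one verifies this is incompatible with a reflex vertex on $\partial p(C_m)$. Alternatively, closer to the original Connelly-Demaine-Rote argument, one can analyze equilibrium at a reflex vertex $v_k$ directly: its two incident edges together with all vectors $p(v_i)-p(v_k)$ to the other vertices lie in a common closed half-plane, and a careful sign-plus-angle bookkeeping forces the strut coefficients at that vertex to vanish, then propagates via connectivity to kill every $\omega_{ij}$, contradicting non-triviality.

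Third, having established $C(p)\neq\{0\}$ pointwise on the open set $\mathcal U\subset\mathcal M^{+}(C_m,\ell)$ of non-convex cycle linkages, I would select a smoothly varying vector field $X$ on $\mathcal U$ with $X(p)$ in the relative interior of $C(p)$, for instance by minimizing a strictly convex quadratic functional over a bounded affine slice of $C(p)$ that depends continuously on $p$. Short-time existence of the flow of $X$ through $p$ produces a smooth family $\{p_t\}_{-\varepsilon<t<\varepsilon}$ with $p_0=p$ and $\dot p_t=X(p_t)\in C(p_t)$ for every $t$; hence the expansive inequalities hold in both time directions throughout the interval, completing the argument.
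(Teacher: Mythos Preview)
The paper does not supply its own proof of this statement: Theorem~\ref{T-4-11} is simply quoted from Connelly--Demaine--Rote \cite[Theorem~3]{Connelly-2003}, with an alternative reference to Rote--Santos--Streinu \cite[Theorem~4.3]{Rote-2003}, and is then used as a black box in the proof of Theorem~\ref{T-1-12}. So there is no argument in the paper to compare your proposal against.

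That said, your sketch is a faithful outline of the original Connelly--Demaine--Rote proof: linearize to the cone of infinitesimal expansive motions, invoke LP duality (Farkas) to reduce non-existence of such a motion to the existence of an ``expansive self-stress'' with sign constraints, eliminate such stresses on non-convex simple polygons via the Maxwell--Cremona lifting (or, equivalently, the reflex-vertex equilibrium analysis), and finally integrate a smooth selection from the interior of the cone to obtain an honest motion. One small caution on your third stage: to match the paper's definition of expansive motion (which requires, at every $t$, at least one \emph{strict} distance increase), you should choose $X(p)$ so that all non-edge derivatives are strictly positive---which CDR in fact establish---rather than merely non-negative; your phrase ``relative interior of $C(p)$'' implicitly handles this, but it is worth making explicit that the strict version is what the duality/Maxwell--Cremona step actually delivers.
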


Let us move on to showing Theorem~\ref{T-1-12}.

\begin{proof}[\textbf{Proof of Theorem~\ref{T-1-12}}]
First it is easy to see $f$ is lower-bounded. Next we prove the properness of $f$. For $c_1=\inf f\geq 0$ and $c_2>\inf f$, we need to verify that any sequence
\[
\{p_n\}\subset f^{-1}\big([c_1,c_2]\big)\subset \mathcal M^{+}(C_m,\ell)
\]
contains a convergent subsequence.
Because the map $\Phi:\mathcal M^{+}(C_m,\ell)\to [0,+\infty)$ is proper, it is equivalent to validating that $\big\{\Phi(p_n)\big\}$ has an upper bound.
Assume on the contrary that there exists a subsequence $\{p_{n_k}\}\subset \{p_n\}$ such that
\[
\Phi(p_{n_k})\to+\infty.
\]
This implies $\{p_{n_k}\}$ tends to a linkage with self-intersections. On the other hand, given that $\{p_{n_k}\}\subset f^{-1}([c_1,c_2])$, we have
\[
0\leq c_1\leq f(p_{n_k})=\frac{1}{A(p_{n_k})}+w(p_{n_k})\Phi(p_{n_k})\leq c_2.
\]
Consequently,
\[
A(p_{n_k})\geq \frac{1}{c_2}>0\quad\text{and}\quad w(p_{n_k})\to 0.
\]
Since $w(p)=0$ if and only if $\alpha_i(p)\leq \pi$ for $i=1,\cdots,m$, we derive that $\{p_{n_k}\}$ approaches a convex cycle linkage with positive area, which has no self-intersections. This leads to a contradiction, so $f$ must be proper.

To demonstrate that $f$ has a unique critical point, we decompose
$\mathcal M^{+}(C_m,\ell)$ into the union of two disjoint subsets:
$\mathcal M_1$ and $\mathcal M_2$, where
$\mathcal M_1$ consists of the convex cycle linkages in $\mathcal M^{+}(C_m,\ell)$, and $\mathcal M_2:=\mathcal M^{+}(C_m,\ell)\setminus\mathcal M_1$.

On $\mathcal M_1$, $w\equiv 0$, so the function $f$ reduces to $\displaystyle{1/A}$. By Proposition~\ref{P-4-4}, $f$ possesses a unique critical point on $\mathcal M_1$.

We claim, meanwhile, that $f$ has no critical points on $\mathcal M_2$. Assume, for contradiction, that $q\in \mathcal M_2$ is a critical point of $f$. Since $q$ is  non-convex, Theorem~\ref{T-4-11} guarantees the existence of an expansive motion $\{p_t\}_{-\varepsilon<t<\varepsilon}$ ($\varepsilon>0$) in $\mathcal M^{+}(C_m,\ell)$ with $p_0=q$. The derivative of $f$ along this motion is expressed as
\[
\frac{\mathrm{d}f(p_t)}{\mathrm{d}t}=
-\frac{1}{A^2}\frac{\mathrm{d}A(p_t)}{\mathrm{d}t}
+w(p_t)\frac{\mathrm{d}\Phi(p_t)}{\mathrm{d}t}+\Phi(p_t)\frac{\mathrm{d}w(p_t)}{\mathrm{d}t}.
\]
By Lemmas~\ref{L-4-9},~\ref{L-4-10} and definition of strain energies, we obtain
\[
\frac{\mathrm{d}f(p_t)}{\mathrm{d}t}\bigg|_{t=0}<0,
\]
which contradicts the assumption that $p_0=q$ is a critical point of $f$.

Therefore, $f$ has exactly one critical point on
$\mathcal M^{+}(C_m,\ell)$. Putting the arguments together, we prove the theorem.
\end{proof}

\begin{remark}\label{R-4-12}
From the above proof, we can see that the function $f=1/A+w\Phi$ attains its global minimum at the cocicular linkage.
\end{remark}

\begin{remark}\label{R-4-13}
Shimamoto and Wootters~\cite{Shimamoto-2014} introduced the function $F=w\Phi$ to study the moduli space. Their proof relied crucially on a compactness argument regarding the set of convex cycle linkages. However, compactness may fail if the constraint\textemdash that for every sign assignment $\sigma: E\to\{-1,1\}$ to the edges, $\sum_{e_i\in E}\sigma(e_i)\ell(e_i)\neq0$\textemdash is removed. Fortunately, by virtue of Theorem~\ref{T-1-7}, we need not resort to specific topological analysis of the set of convex cycle linkages.
\end{remark}

To proceed, we further construct L-R functions on the moduli space
$\mathcal M^{+}(C_m)$. For each $p\in \mathcal M^{+}(C_m)$, Penner's Theorem~\cite[Theorem 6.2]{Penner-1987} implies there exists a unique cocircular configuration
$\tau(p)\in \mathcal M^{+}(C_m)$ sharing the same length vector as $p$. This gives rise to the following projection map
\[
\begin{aligned}
\tau\;:\;\mathcal M^{+}(C_m)\;&\longrightarrow\;\mathcal Q^{+}(C_m)\\
p\;\;\;\,&\longmapsto\;\;\tau(p),
\end{aligned}
\]
where $\mathcal Q^{+}(C_m)\subset\mathcal M^{+}(C_m)$ denotes the subspace of cocircular configurations. It is necessary to analyze the smoothness of this map.

\begin{proposition}\label{P-4-14}
$\mathcal Q^{+}(C_m)\subset\mathcal M^{+}(C_m)$ is a smooth submanifold of dimension $m$. Moreover, the map $\tau:\mathcal M^{+}(C_m)\to\mathcal Q^{+}(C_m)$ is smooth.
\end{proposition}

\begin{proof}
Without loss of generality, we assume $m\geq 4$. Given $p\in\mathcal M^{+}(C_m)$, let $\mathcal T$ be a triangulation of the polygon $p(C_m)$ without adding vertices, and let $(l,\lambda)$ be the local coordinates associated to
$\mathcal T$. Note that $\tau(p)$ corresponds to a cocircular polygon admitting a triangulation combinatorially equivalent to
 $\mathcal T$. According to Proposition~\ref{P-4-4}, $\tau(p)$ is parameterized by $\big(l,\lambda^\ast(l)\big)$, where $\big(l,\lambda^\ast(l)\big)$ satisfies
\[
\frac{\partial A}{\partial \lambda_i}\big(l,\lambda^\ast(l)\big)=0\quad\text{for}\quad i=1,\cdots,m-3.
\]
By invoking Proposition~\ref{P-4-8} and the Implicit Function Theorem~\cite{Lee-2018}, we deduce that $\lambda^\ast(l)$ depends smoothly on $l$, which completes the proof.
\end{proof}


\begin{definition}
A smooth map $\Phi:\mathcal M^{+}(C_m)\to [0,+\infty)$ is called a \textbf{$\tau$-potential}
on $\mathcal M^{+}(C_m)$ if the following properties hold:
\begin{itemize}
\item[\textbf{(s1)}] For every $q\in \mathcal Q^{+}(C_m)$, the restriction of $\Phi$ to $\tau^{-1}(q)$ is a strain energy;
\item[\textbf{(s2)}]For every compact set $K\subset \mathcal Q^{+}(C_m)$, the restriction of $\Phi$ to $\tau^{-1}(K)$ is proper.
\end{itemize}
\end{definition}

We construct L-R functions on $\mathcal M^{+}(C_m)$ as follows.
\begin{theorem}\label{T-4-16}
Let $m\geq 3$. Assume $\Phi:\mathcal M^{+}(C_m)\to [0,+\infty)$ is a $\tau$-potential and assume $h:\mathcal Q^{+}(C_m)\to\mathbb R$ is a Lyapunov-Reeb function. Then
\[
f=\frac{1}{A}-\frac{1}{A\circ\tau}+w\Phi+h\circ\tau
\]
is a Lyapunov-Reeb function on $\mathcal M^{+}(C_m)$.
\end{theorem}
\begin{proof}
For every $p\in \mathcal M^{+}(C_m)$, $\tau(p)$ is automatically convex, and thus $w\big(\tau(p)\big)\equiv0$. In light of property~\textbf{(s1)}, Theorem~\ref{T-1-12} and Remark~\ref{R-4-12}, we have
\begin{equation}\label{E-4-4}
\frac{1}{A(p)}+w(p)\Phi(p)\geq \frac{1}{A\big(\tau(p)\big)}
+w\big(\tau(p)\big)\Phi\big(\tau(p)\big)
=\frac{1}{A\big(\tau(p)\big)}.
\end{equation}
Since $h$ is lower-bounded, $f$ must also be lower-bounded.

To verify that $f$ possesses exactly one critical point, we decompose
\[
\mathcal M^{+}(C_m)=\mathcal Z_1\cup \mathcal Z_2,
\]
where $\mathcal Z_1\subset \mathcal M^{+}(C_m)$ denotes the subset of convex cycle configurations, and
$\mathcal Z_2$ is defined as the complement $\mathcal M^{+}(C_m)\setminus\mathcal Z_1$.
On $\mathcal Z_1$, observe that
\[
f=\frac{1}{A}-\frac{1}{A\circ\tau}+h\circ\tau.
\]
Following the proof of Theorem~\ref{T-3-5},  we assert $f$ has a unique critical point on $\mathcal Z_1$. Meanwhile, analogous arguments to those in Theorem~\ref{T-1-12} imply that $f$ has no critical points on $\mathcal Z_2$. Therefore, $f$ has exactly one critical point on $\mathcal M^{+}(C_m)$.

It remains to show that $f$ is proper. For $c_1=\inf f$ and $c_2>c_1$, we need to demonstrate that any sequence $\{p_n\}$ satisfying
\begin{equation}\label{E-4-5}
c_1\leq f(p_{n})=\frac{1}{A(p_{n})}
-\frac{1}{A\big(\tau(p_n)\big)}+w(p_{n})\Phi(p_{n})
+h\big(\tau(p_n)\big)
\leq c_2
\end{equation}
contains a convergent subsequence. Combining~\eqref{E-4-4} and~\eqref{E-4-5} yields
\[
h\big(\tau(p_n)\big)\leq c_2.
\]
Given that $h$ is a Lyapunov-Reeb function, $\big\{\tau(p_n)\big\}$ lies within a compact subset of $\mathcal Q^{+}(C_m)$, so $\big\{A\big(\tau(p_n)\big)\big\}$ is bounded both below and above by positive constants. This implies that there exists $c_3>0$ such that
\begin{equation}\label{E-4-6}
0<\frac{1}{A(p_{n})}+w(p_{n})\Phi(p_{n})\leq c_3.
\end{equation}
In view of property~\textbf{(s2)}, to confirm $\{p_n\}$ contains a convergent subsequence, it suffices to check that $\big\{\Phi(p_{n})\big\}$ is upper-bounded. Due to~\eqref{E-4-6}, this follows by analogy with the proof of Theorem~\ref{T-1-12}. Hence $f$ is proper. In conclusion, we establish that $f$ is a Lyapunov-Reeb function on $\mathcal M^{+}(C_m)$.
\end{proof}

Note that $\mathcal Q^{+}(C_m)$ is diffeomorphic to a non-empty open convex set $\Omega\subset\mathbb R_{+}^m$,  where each vector $(l_1,\cdots,l_m)$ satisfies
$l_i<\sum\nolimits_{j\neq i} l_j$ for $i=1,\cdots,m$. Thus, theoretically,
$\mathcal Q^{+}(C_m)$ admits L-R functions. We now construct a specific L-R function that will be applied to developing algorithms for the Configuration Refolding Problem in the next section. For $i=1,\cdots,m$, the length coordinate $l_i$ serves as a smooth function on $\mathcal Q^{+}(C_m)$. We relate these to L-R functions on
$\mathcal Q^{+}(C_m)$ as follows.

\begin{lemma}\label{L-4-17}
Let $L=\sum_{i=1}^m l_i$. Then
\[
h=\log^2 L-\left(\sum\nolimits_{i=1}^{m}\log\sin\dfrac{2\pi l_i}{L}\right)
\]
is a Lyapunov-Reeb function on $\mathcal Q^{+}(C_m)$.
\end{lemma}
\begin{proof}
Clearly, $h$ is proper and lower-bounded. It remains to verify that $h$ has exactly one critical point. For $i=1,\cdots,m-1$, we introduce the variable substitution $\mu_i=l_i/L$, regard $h$ as a function of $(\mu,L):=(\mu_1,\cdots,\mu_{m-1},L)$, and derive
\[
h(\mu,L)=\log^2 L-\left(\sum\nolimits_{i=1}^{m-1}\log\sin2\pi\mu_i\right)
-\log\sin2\pi\left[1-\left(\sum\nolimits_{i=1}^{m-1}\mu_i\right)\right].
\]
A straightforward computation yields that $h$ has a unique critical point
\[
\big(\tilde{\mu},\tilde{L}\big):=\left(1/m,\cdots,1/m,1\right).
\]
Consequently, $h$ is a Lyapunov-Reeb function on $\mathcal Q^{+}(C_m)$.
\end{proof}

We are now in a position to prove Theorem~\ref{T-1-4} and Corollary~\ref{C-1-14}.

\begin{proof}[\textbf{Proof of Theorem~\ref{T-1-4}}]
We assume $m\geq 4$, as the case $m=3$ is trivial. Applying Theorems~\ref{T-1-7},~\ref{T-1-12},~\ref{T-4-16} and Lemma~\ref{L-4-17}, we reduce the proof to constructing a $\tau$-potential on $\mathcal M^{+}(C_m)$. As before, it is easy to verify that the function
\[
\Phi(p)=\sum_{\substack{[v_i,v_j]\in E \\ v_k\in V\setminus\{v_i,v_j\}}}\frac{1}{\left(\|p(v_i)-p(v_k)\|+\|p(v_j)-p(v_k)\|-\|p(v_i)-p(v_j)\|\right)^2}
\]
satisfies the required properties. This completes the proof.
\end{proof}

\begin{proof}[\textbf{Proof of Corollary~\ref{C-1-14}}]
The assertion follows from Theorem~\ref{T-1-7}.
\end{proof}

\section{Algorithms}\label{S-5}
This section is devoted to developing algorithms for the Carpenter's Rule Problem and the Linkage (Configuration) Refolding Problem. It is noteworthy that both problems hold significant importance in practical fields, particularly in computer graphics~\cite{Hopcroft-1985,Alt-2003} and robotic-arm folding mechanisms~\cite{Farber-2008}. Over recent decades, remarkable progress has been made in addressing these challenges. Seminal contributions include the works of Connelly, Demaine and Rote~\cite{Connelly-2003}, Streinu~\cite{Streinu-2005}, Cantarella, Demaine, Iben and O'Brien~\cite{Cantarella-2004}, Iben, O'Brien and Demaine~\cite{Demaine-2009}, and many others.

\subsection{Carpenter's Rule Problem}
First we study the problem of straightening arm linkages (configurations). Recall that $\mathcal M(R_m,\ell)$ is identified with an open subset $U_{m,\ell}\subset \left(\mathbb R/2\pi\mathbb Z\right)^{m-2}$ via the coordinates $\theta=(\theta_2,\cdots,\theta_{m-1})$. Let $m\geq 3$ and let $\Phi$ be given as in Remark~\ref{R-1-10}. Viewing $\Phi$ as a function of $\theta$, we infer from Theorem~\ref{T-1-11} that $\Phi$ is an L-R function on $U_{m,\ell}$ with $\vec{0}=(0,\cdots,0)\in U_{m,\ell}$ being the critical point. Consider the negative gradient flow
\begin{equation}\label{E-5-1}
\frac{\mathrm d\theta_i}{\mathrm d t}=-\frac{\partial \Phi}{\partial \theta_i},\quad i=2,\cdots,m-1.
\end{equation}

From Theorem~\ref{T-1-7}, we derive the following result.

\begin{theorem}
Suppose $m\geq 3$. The flow~\eqref{E-5-1} starting at any point in $U_{m,\ell}$ exists for all $t\geq 0$ and converges to $\vec{0}$ as $t\to+\infty$.
\end{theorem}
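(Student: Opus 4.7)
The plan is to reduce the statement directly to the general existence and convergence results for negative gradient flows of Lyapunov-Reeb functions established earlier. Equip $U_{m,\ell}\subset \left(\mathbb R/2\pi\mathbb Z\right)^{m-2}$ with the flat Riemannian metric inherited from the standard coordinates $\theta=(\theta_2,\dots,\theta_{m-1})$. Under this metric, the gradient of a smooth function $\Phi$ coincides with the Euclidean gradient $(\partial\Phi/\partial\theta_2,\dots,\partial\Phi/\partial\theta_{m-1})$, so the system~\eqref{E-5-1} is precisely the negative gradient flow of $\Phi$ on $U_{m,\ell}$.

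Next I would invoke Theorem~\ref{T-1-11}: since the function $\Phi$ from Remark~\ref{R-1-10} is a strain energy on $\mathcal M(R_m,\ell)$, it is a Lyapunov-Reeb function on $\mathcal M(R_m,\ell)$, and hence on $U_{m,\ell}$ under the identification. Inspecting the proof of Theorem~\ref{T-1-11} shows that the unique critical point of $\Phi$ is the straight linkage; in the coordinates $\theta$ this is exactly $\vec 0=(0,\dots,0)$. Thus all hypotheses of Theorem~\ref{T-1-7} hold for $(U_{m,\ell},\Phi)$.

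Finally I would apply part $(i)$ of Theorem~\ref{T-1-7}: the negative gradient flow of the Lyapunov-Reeb function $\Phi$, starting at any point of $U_{m,\ell}$, exists for all $t\geq 0$ and converges to the unique critical point as $t\to+\infty$. Since this flow is the system~\eqref{E-5-1} and the critical point is $\vec 0$, the claim follows. I expect no serious obstacle; the only point requiring a brief sentence is to justify that the Euclidean gradient in the $\theta$-coordinates is the Riemannian gradient for the chosen metric so that~\eqref{E-5-1} matches the abstract negative gradient flow appearing in Theorem~\ref{T-1-7}.
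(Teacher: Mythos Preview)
Your proposal is correct and mirrors the paper's own argument: the paper notes that $\Phi$ is a Lyapunov--Reeb function on $U_{m,\ell}$ with critical point $\vec{0}$ by Theorem~\ref{T-1-11}, and then obtains the result directly from part~$(i)$ of Theorem~\ref{T-1-7}. Your extra remark about the flat metric making~\eqref{E-5-1} the genuine negative gradient flow is a reasonable clarification but introduces nothing new.
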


Let $\theta(t)$ be the solution to~\eqref{E-5-1} and let $p_t\in \mathcal M(R_m,\ell)$ be given by
\[
\begin{aligned}
& p_t(v_1)=(0,0),\quad p_t(v_2)=(\ell_1,0),\\
& p_t(v_i)=
\left(\ell_1+\sum\nolimits_{j=2}^{i-1}\ell_j\cos\theta_j(t),
\sum\nolimits_{j=2}^{i-1}\ell_j\sin\theta_j(t)\right),\quad i=3,\cdots,m,
\end{aligned}
\]
where $\ell_k=\ell\big([v_k,v_{k+1}]\big)$ for $k=1,\cdots,m-1$. Then $\{p_t\}_{0\leq t\leq +\infty}$ yields the desired straightening motion in $\mathcal M(R_m,\ell)$ (see Fig.~\ref{F-4}).

\begin{figure}[htbp]
    \centering
    \includegraphics[width=0.78\textwidth]{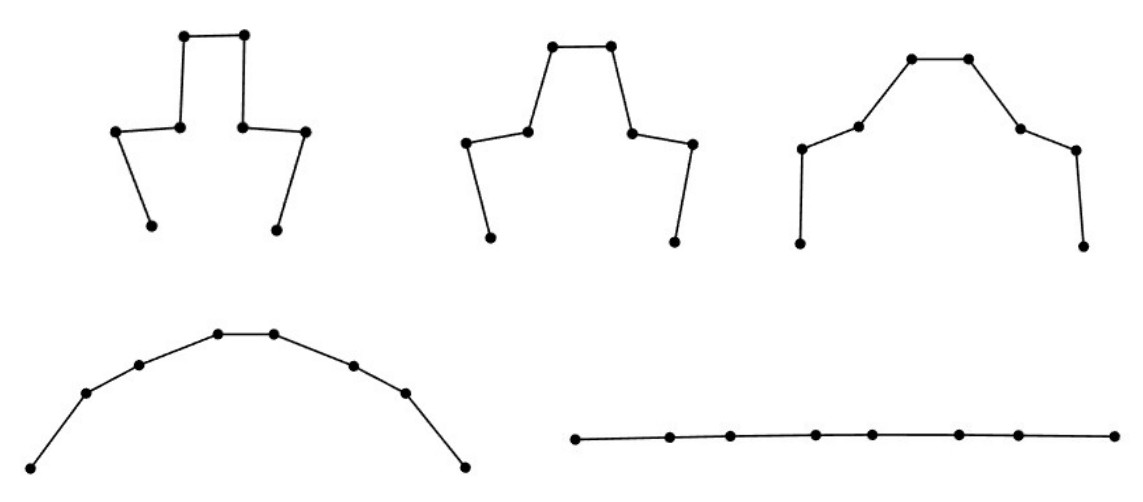}
    \caption{Straightening an arm linkage}
    \label{F-4}
\end{figure}

Analogously, we identify $\mathcal M(R_m)$ with an open subset $U_m\subset \mathbb R_{+}^{m-1}\times\left(\mathbb R/2\pi\mathbb Z\right)^{m-2}$ via the coordinates $(\rho,\theta)=(\rho_1,\cdots,\rho_{m-1},\theta_2,\cdots,\theta_{m-1})$. Let $\Phi$ and $h$ be given as in Remarks~\ref{R-1-10} and~\ref{R-3-6}, respectively. By virtue of Theorem~\ref{T-3-5},
\[
f(\rho,\theta)=\Phi(\rho,\theta)-\Phi(\rho,\vec{0})+h(\rho,\vec{0})
\]
is an L-R function on $U_m$. Let $\big(\rho(t),\theta(t)\big)$ be the negative gradient flow of $f$ starting at a given vector in $U_m$, and let $p_t\in\mathcal M(R_m)$ be the configuration parameterized by $\big(\rho(t),\theta(t)\big)$. Then $\{p_t\}_{0\leq t\leq +\infty}$ forms a straightening motion in $\mathcal M(R_m)$.

We now address the problem of convexifying cycle linkages and configurations. To this end, let $\ell_i=\ell\big([v_i,v_{i+1}]\big)$ for $i=1,\cdots,m$ (with $v_{m+1}:=v_1$), and let
\[
\Xi=\left\{\theta=(\theta_2,\cdots,\theta_{m-1})\in (\mathbb R/2\pi\mathbb Z)^{m-2}:~u(\theta)=0\right\},
\]
where
\[
u(\theta):=\sqrt{\left(\ell_1
+\sum\nolimits_{j=2}^{m-1}\ell_j\cos\theta_j\right)^2
+\left(\sum\nolimits_{j=2}^{m-1}\ell_j\sin\theta_j\right)^2}-\ell_m.
\]
As in~\S\ref{S-4}, we identify $\mathcal M^{+}(C_m,\ell)$ with an open subset $W_{m,\ell}\subset\Xi$ and regard
\[
f=\frac{1}{A}+w\Phi
\]
as a function on $W_{m,\ell}$. Consider the flow
\begin{equation}\label{E-5-2}
\frac{\mathrm d\theta_i}{\mathrm d t}=-\frac{\partial f}{\partial \theta_i}+\frac{\langle \nabla f, \nabla u\rangle}{\|\nabla u\|^2}\frac{\partial u}{\partial \theta_i},
\quad i=2,\cdots,m,
\end{equation}
which projects the negative gradient of  $f$  onto the tangent space of the constraint manifold $\Xi$. Following the proof of Lemma~\ref{L-4-1}, we show that
$\|\nabla u\|^2\neq 0$ on $W_{m,\ell}$, ensuring the flow~\eqref{E-5-2} is well defined. Let $\theta(t)$ be the solution of~\eqref{E-5-2}. A routine calculation yields
\[
\frac{\mathrm d u\big(\theta(t)\big)}{\mathrm d t}=0,
\]
so $\theta(t)$ remains on $\Xi$ whenever
$\theta(0)\in W_{m,\ell}\subset \Xi$. Furthermore,
\[
\frac{\mathrm d f\big(\theta(t)\big)}{\mathrm d t}\leq 0,
\]
from which we conclude that $\theta(t)$ is confined to $W_{m,\ell}$ for all $t\geq 0$. Let $\theta^\ast$ be the critical point of $f$. Because of Theorem~\ref{T-1-7}, the following result holds.
\begin{theorem}\label{T-5-2}
Suppose $m\geq 3$ and suppose $\ell:E\to\mathbb R_{+}$ satisfies
condition~$\mathrm{\mathbf{(c1)}}$. The flow~\eqref{E-5-2} starting at any point in $W_{m,\ell}$ exists for all $t\geq 0$ and converges to $\theta^\ast$ as $t\to+\infty$.
\end{theorem}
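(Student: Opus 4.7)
The plan is to recognize the flow in~\eqref{E-5-2} as the negative gradient flow of $f$ on $W_{m,\ell}$ with respect to the Riemannian metric induced from the ambient Euclidean space, and then invoke part~$(i)$ of Theorem~\ref{T-1-7}.

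First I would verify the identification of~\eqref{E-5-2} with a negative gradient flow. Since $\Xi = u^{-1}(0)$, the vector $\nabla u$ is normal to $\Xi$ at every point of $W_{m,\ell}$ (noting $\|\nabla u\|^2 \neq 0$ on $W_{m,\ell}$, as already argued in the text). Hence the right-hand side of~\eqref{E-5-2},
\[
-\nabla f + \frac{\langle \nabla f,\nabla u\rangle}{\|\nabla u\|^2}\nabla u,
\]
is precisely the orthogonal projection of $-\nabla f$ onto the tangent space $T_\theta\Xi$. Thus~\eqref{E-5-2} coincides with the negative gradient flow of $f|_{W_{m,\ell}}$ computed with respect to the induced Riemannian metric on $W_{m,\ell}$ as a submanifold of $(\mathbb R/2\pi\mathbb Z)^{m-2}$.

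Second, I would combine this with the two facts already established in the paragraph preceding the theorem: $\theta(t)$ remains on $\Xi$ because $\frac{\mathrm{d}u(\theta(t))}{\mathrm{d}t}=0$, and $\theta(t)$ stays inside $W_{m,\ell}$ because $f$ is non-increasing along the flow while being proper and lower-bounded on $W_{m,\ell}$, so every trajectory is confined to a compact sublevel set $f^{-1}([\inf f, f(\theta(0))])\subset W_{m,\ell}$. By Theorem~\ref{T-1-12}, $f$ is a Lyapunov-Reeb function on $\mathcal M^{+}(C_m,\ell)\cong W_{m,\ell}$ with $\theta^\ast$ as its unique critical point.

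With these pieces in place, part $(i)$ of Theorem~\ref{T-1-7} applied to the smooth Riemannian manifold $W_{m,\ell}$ equipped with the L-R function $f$ yields both conclusions at once: the negative gradient flow (equivalently, the flow~\eqref{E-5-2}) starting from any point of $W_{m,\ell}$ exists for all $t\geq 0$ and converges to $\theta^\ast$ as $t\to +\infty$. The main potential obstacle is the forward-completeness issue inherent in flows on open manifolds, exemplified earlier in the paper; however, this is exactly the difficulty that Theorem~\ref{T-1-7} is engineered to handle via the properness and lower-boundedness built into the definition of an L-R function, so no additional argument is required.
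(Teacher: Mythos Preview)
Your proposal is correct and follows essentially the same approach as the paper, which simply states ``In light of Theorem~\ref{T-1-7}, the subsequent result holds'' after observing that $\theta(t)$ stays in $W_{m,\ell}$. You have merely made explicit the identification of~\eqref{E-5-2} with the intrinsic negative gradient flow and the invocation of Theorem~\ref{T-1-12}, which the paper leaves implicit.
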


Let $p_t\in \mathcal M^{+}(C_m,\ell)$ denote the linkage parameterized by $\theta(t)$. Thus $\{p_t\}_{0\leq t\leq +\infty}$ yields a convexifying motion in
$\mathcal M^{+}(C_m,\ell)$ (see Fig.~\ref{F-5}). Analogously, by invoking the L-R function described in Theorem~\ref{T-4-16}, one can derive a convexifying motion in $\mathcal M^{+}(C_m)$.

\begin{figure}[htbp]
    \centering
    \includegraphics[width=0.79\textwidth]{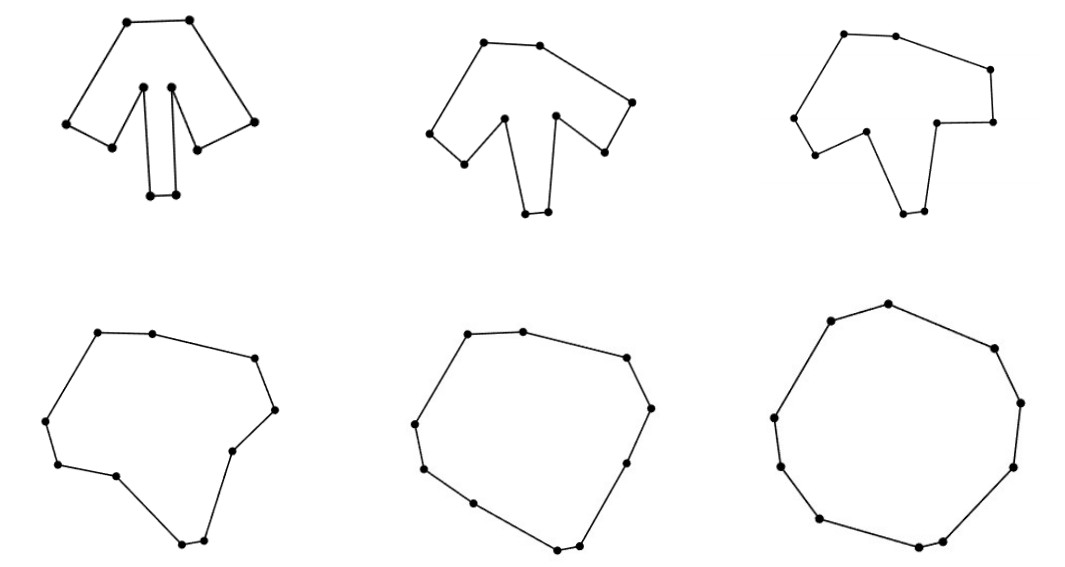}
    \caption{Convexifying a cycle linkage}
    \label{F-5}
\end{figure}

\subsection{ Linkage (Configuration) Refolding Problem} In fact, constructing a motion between two given linkages (configurations) is closely related to the Carpenter's Rule Problem. For instance, one can deform a cycle linkage into a cocircular shape and then fold it back down to the shape of the second linkage. However, this method leads to a needless detour, particularly when the two linkages have ``near'' shapes. The Refolding Problem aims to find more direct motions between prescribed linkages (configurations).

We embed this problem within the framework of the following Renormalization Problem:
\emph{Let $(M,g)$ be a  Riemannian manifold of dimension $d\geq 1$, and let $f:M\to\mathbb R$ be a Lyapunov-Reeb function. Given $p_0,p_1\in M$, how can one construct a Riemannian metric $\tilde{g}$ on $M$ and a geodesic in $(M,\tilde{g})$ connecting  $p_0$ and $p_1$?} To address the problem, we define $\eta_{a,b}:\mathbb R\to\mathbb R$ as in Lemma~\ref{L-2-8}, where
\[
a=1+\max\big\{f(p_0),f(p_1)\big\}\quad \text{and}\quad b=2+\max\big\{f(p_0),f(p_1)\big\}.
\]
Let $\psi: \mathbb R\times M\to M$ denote the flow generated by the vector field $-\eta_{a,b}(f)\nabla f$. This means $\psi$ satisfies
\begin{equation}\label{E-5-3}
\frac{\mathrm d \psi(s,p)}{\mathrm d s}=-\eta_{a,b}\left(f\big(\psi(s,p)\big)\right)\nabla f\big(\psi(s,p)\big).
\end{equation}
Note that $-\eta_{a,b}(f)\nabla f$ is a compactly supported smooth vector field. By virtue of Theorem~\ref{T-2-1}, the flow $\psi$ is complete, so $\psi_s(\cdot):=\psi(s,\cdot):M\to M$ defines a diffeomorphism for every
$s\in \mathbb R$. Making use of the pullback metric $g_s:=\psi_s^\ast g$, we establish the following property.

\begin{proposition}\label{P-5-3}
There exists $s_0\geq0$ such that for any $s\geq s_0$, the given points $p_0,p_1$ can be connected by a geodesic in $(M,g_s)$. In particular, for any $\delta>0$, there is an integer $n_0\geq0$ such that for any $n\geq n_0$, the points $p_0,p_1$ can be connected by a geodesic in $(M,g_{n\delta})$.
\end{proposition}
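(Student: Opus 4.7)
The plan is to exploit the fact that $\psi_s:(M,g_s)\to(M,g)$ is by construction an isometry, so producing a geodesic on $(M,g_s)$ through $p_0,p_1$ is equivalent to producing a geodesic on $(M,g)$ through the images $\psi_s(p_0),\psi_s(p_1)$. Thus the task reduces to showing that, for $s$ large, these two images lie arbitrarily close together near the critical point $q$, where standard Riemannian geometry provides geodesic connectivity.

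First I would unpack the dynamics of $\psi$. Since $\eta_{a,b}\equiv 1$ on $(-\infty,a]$ and $a>\max\{f(p_0),f(p_1)\}$, the sublevel set $M_a=\{f\leq a\}$ is forward invariant under $\psi$ (because along any orbit $\tfrac{d}{ds}f(\psi(s,p))=-\eta_{a,b}(f)\|\nabla f\|^2\leq 0$), and on $M_a$ the flow $\psi$ coincides with the negative gradient flow of $f$. Applying Theorem~\ref{T-2-3} to the L-R function $f$, I then conclude
\[
\lim_{s\to+\infty}\psi_s(p_i)=q,\qquad i=0,1,
\]
where $q$ is the unique critical point of $f$.

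Next I would invoke the classical fact (see, e.g., Lee~\cite{Lee-2018}) that in any Riemannian manifold each point admits a \emph{geodesically convex} neighborhood: an open set $U$ containing $q$ such that any two points of $U$ are joined by a unique minimizing $g$-geodesic contained in $U$. Fix such a $U$. By the convergence established above, there exists $s_0\geq 0$ such that for every $s\geq s_0$ both $\psi_s(p_0)$ and $\psi_s(p_1)$ lie in $U$; hence there is a $g$-geodesic $\gamma_s\colon[0,1]\to M$ with $\gamma_s(0)=\psi_s(p_0)$ and $\gamma_s(1)=\psi_s(p_1)$. Pulling back by the isometry $\psi_s\colon(M,g_s)\to(M,g)$ yields the $g_s$-geodesic $\psi_s^{-1}\circ\gamma_s$ joining $p_0$ and $p_1$, proving the first assertion. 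The second assertion is then immediate: given $\delta>0$, setting $n_0=\lceil s_0/\delta\rceil$ guarantees $n\delta\geq s_0$ for all $n\geq n_0$.

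The step I expect to warrant the most care is the justification that $\psi_s$ is a genuine isometry between $(M,g_s)$ and $(M,g)$ for each $s$ — which rests on the completeness of the flow (hence $\psi_s$ is a diffeomorphism, as guaranteed by Theorem~\ref{T-2-1} applied to the compactly supported field $-\eta_{a,b}(f)\nabla f$) — together with the verification that $\psi_s(p_i)$ actually enters the chosen convex neighborhood $U$. Beyond these, the argument is essentially formal, since both the dynamical ingredient (forward invariance of $M_a$ plus Theorem~\ref{T-2-3}) and the Riemannian ingredient (existence of convex neighborhoods) are already available.
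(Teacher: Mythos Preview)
Your proposal is correct and follows essentially the same approach as the paper: pick a geodesically convex neighborhood of the critical point $q$, use the convergence $\psi_s(p_i)\to q$ (the paper phrases this as ``similar arguments to the proof of Theorem~\ref{T-2-5}'', while you spell out forward invariance of $M_a$ and Theorem~\ref{T-2-3}), and pull back the resulting $g$-geodesic via $\psi_{-s}=\psi_s^{-1}$. Your explicit remark that $\psi_s:(M,g_s)\to(M,g)$ is an isometry is a helpful clarification the paper leaves implicit.
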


\begin{proof}
Let $q\in M$ be the critical point of $f$, and let $B\subset M$ be a geodesic convex neighborhood of $q$ in $(M,g)$. Similar arguments to those in the proof of Theorem~\ref{T-2-5} show that there exists $s_0\geq0$ such that for any $s\geq s_0$,
\[
\psi_s(p_0)=\psi(s,p_0)\in B,\quad \psi_s(p_1)=\psi(s,p_1)\in B.
\]
Since $B\subset M$ is a geodesic convex set in $(M,g)$, one can find a geodesic arc $\gamma$ on $B$ joining the points $\psi_s(p_0)$ and $\psi_s(p_1)$. Thus, $p_0,p_1$ are connected by the curve
\[
\gamma_{s}:=\psi_{-s}(\gamma)=\psi(-s,\gamma),
\]
which is a geodesic arc in $(M,g_s)$. This completes the proof.
\end{proof}

With this proposition, we reduce the Renormalization Problem to constructing a geodesic $\gamma^0$ in $(M,g)$ for a pair of ``nearby'' points $\psi_{n_0\delta}(p_0)$ and $\psi_{n_0\delta}(p_1)$, followed by using the preimage curve $\psi_{-n_0\delta}(\gamma^0)$ (see Fig.~\ref{F-6}). For $i=0,1,2,\cdots$, define
\[
\gamma^i:=\psi_{-i\delta}(\gamma^0).
\]
A systematic approach to constructing the curve sequence $\{\gamma^0,\gamma^1,\cdots\}$ is achieved via the following iteration process:
\begin{equation}\label{E-5-4}
\gamma^{i+1}=\psi_{-\delta}(\gamma^{i}).
\end{equation}
We hereby formalize the algorithm for the Renormalization Problem as follows:
\begin{itemize}
\item[$1.$] Select $\delta>0$ and determine the minimal non-negative integer $n_0$ such that $\psi_{n_0\delta}(p_0)$ and $\psi_{n_0\delta}(p_1)$ are connected by a geodesic $\gamma^0$ in $(M,g)$. In view of Proposition~\ref{P-5-3}, this can be achieved through a finite number of iterations and tests.
\item[$2.$] Implement the iterative process defined  in~\eqref{E-5-4} to derive the curve $\gamma^{n_0}$, which serves as a geodesic in $(M,g_{n_0\delta})$ connecting $p_0$ and $p_1$.
\item[$3.$] To improve precision, refine $\delta$ by choosing smaller values and repeat the preceding two steps.
\end{itemize}

\begin{remark}
The above algorithm relies crucially on the assumption that $f$ has exactly one critical point. Without this property, the first step may fail, as $\psi_{n\delta}(p_0)$ and $\psi_{n\delta}(p_1)$ might converge to distinct critical points. In fact, when compared with the method in~\cite{Demaine-2009}, the use of L-R functions exhibits the advantage of predicting the long-time behavior and streamlining the proof of convergence.
\end{remark}

\begin{figure}[htbp]
\centering
\begin{tikzpicture}[scale=0.67]
  \coordinate (inner left)  at (-0.7,-0.6);
  \coordinate (inner right) at (0.7,-0.6);
  \coordinate (outer left)  at (-2.9,4.8);
  \coordinate (outer right) at (2.5,5.2);
  \coordinate (q) at (0,-1.2);
  \draw[black] plot[smooth cycle,tension=0.5] coordinates {
    (-2.8,-1) (-4,5.5)  (-1,4.5) (0.3,4.5)
     (2,5.8) (3.8,5) (3.1,0.3)
    (2.0,-1.7) (-1,-2)
  };
  \draw[black] (0,-0.7) ellipse [x radius=1.0cm, y radius=1.0cm];
  \fill[black] (inner left) circle (2pt) node[left=5pt] {\small $\psi_{n_0\delta}(p_0)$};
  \fill[black] (inner right) circle (2pt) node[right=5pt] {\small $\psi_{n_0\delta}(p_1)$};
  \fill[black] (outer left) circle (2pt) node[left=2pt] {$p_0$};
  \fill[black] (outer right) circle (2pt) node[right=2pt] {$p_1$};
  \fill[black] (q) circle (2pt) node[right] {$q$};
  \draw[black] (inner left) .. controls (0,-0.5) ..
    node[pos=0.5, above=-0.9pt] {$\gamma^0$}
    (inner right);
  \draw[black] (outer left) .. controls (0,3.5) ..
    node[below=0.1pt] {$\gamma^{n_0}$}
    (outer right);
  \draw[dashed] (inner left) .. controls (-1.5,0.5) ..
    coordinate[pos=0.4] (leftDot1)
    coordinate[pos=0.6] (leftDot2)
    (outer left);
  \fill[black] (leftDot1) circle (2pt);
  \fill[black] (leftDot2) circle (2pt);
  \draw[dashed] (inner right) .. controls (1.5,0.5) ..
    coordinate[pos=0.4] (rightDot1)
    coordinate[pos=0.6] (rightDot2)
    (outer right);
  \fill[black] (rightDot1) circle (2pt);
  \fill[black] (rightDot2) circle (2pt);
  \draw[black] (leftDot1) .. controls (0,0.9) .. node[above] {$\gamma^1$}(rightDot1);
  \draw[black] (leftDot2) .. controls (0,1.7) ..
    node[above] (gamma2) {$\gamma^2$}
    (rightDot2);
  \fill[black] ([yshift=0.25cm]gamma2.north) circle (1pt);
  \fill[black] ([yshift=0.4cm]gamma2.north) circle (1pt);
  \fill[black] ([yshift=0.55cm]gamma2.north) circle (1pt);
\end{tikzpicture}
\caption{Renormalization process}\label{F-6}
\end{figure}
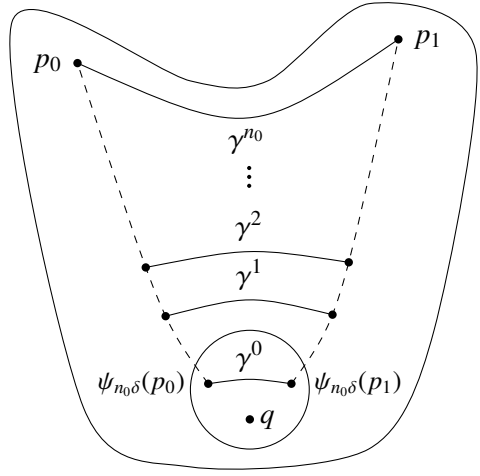

Let us consider the Arm Linkage Refolding Problem: Given $p_0,p_1\in \mathcal M(R_m,\ell)$, construct an ``optimal" motion between them. As before, we identify $\mathcal M(R_m,\ell)$ with an open subset $U_{m,\ell}\subset \left(\mathbb R/2\pi\mathbb Z\right)^{m-2}$ via the coordinates $\theta=(\theta_2,\cdots,\theta_{m-1})$ and equip $U_{m,\ell}$ with the Riemannian metric
\[
g=\sum\nolimits_{i=2}^{m-1}(\mathrm d\theta_i)^2.
\]
Let $\theta^0,\theta^1\in U_{m,\ell}$ be the coordinates of $p_0,p_1$, respectively, and let $f:U_{m,\ell}\to\mathbb R$ be given as in Theorem~\ref{T-1-11}. Suppose
$\psi:\mathbb R\times U_{m,\ell}\to U_{m,\ell}$ is constructed as in~\eqref{E-5-3}. For sufficiently small $\delta>0$, the map
$\psi_\delta: U_{m,\ell}\to U_{m,\ell}$ can be approximated by
\[
\begin{aligned}
D_\delta: U_{m,\ell}&\;\longrightarrow\; U_{m,\ell}\\
\theta\quad&\;\longmapsto\; \theta-\delta~\eta_{a,b}\big(f(\theta)\big)~\nabla f(\theta).
\end{aligned}
\]
In addition, $\psi_{-\delta}: U_{m,\ell}\to U_{m,\ell}$ is approximated by
\[
\begin{aligned}
D_{-\delta}: U_{m,\ell}&\;\longrightarrow\; U_{m,\ell}\\
\theta\quad&\;\longmapsto\; \theta+\delta~\eta_{a,b}\big(f(\theta)\big)~\nabla f(\theta).
\end{aligned}
\]
Utilizing the renormalization algorithm described above, we construct the desired motion between $p_0$ and $p_1$ (see Fig.~\ref{F-7}).

\begin{figure}[htbp]
    \centering
    \includegraphics[width=0.75\textwidth]{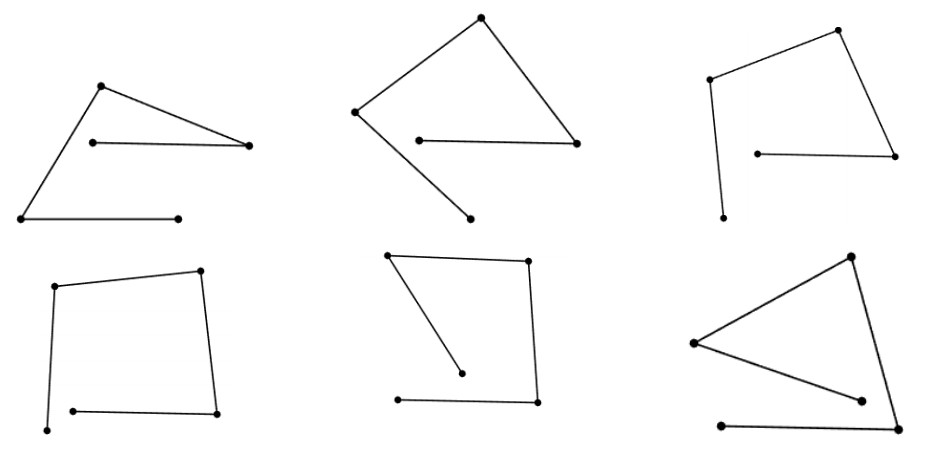}
    \caption{Refolding arm linkages}
    \label{F-7}
\end{figure}

Similarly, we identify $\mathcal M(R_m)$ with a subset $U_m\subset \mathbb R_{+}^{m-1}\times\left(\mathbb R/2\pi\mathbb Z\right)^{m-2}$ and equip it with the Riemannian metric
\[
g=\sum\nolimits_{i=1}^{m-1}(\mathrm d\rho_i)^2+\sum\nolimits_{i=2}^{m-1}(\mathrm d\theta_i)^2.
\]
Leveraging the L-R function from Theorem~\ref{T-3-5}, we can derive the renormalization algorithm for the Arm Configuration Refolding Problem.

We proceed to address the problem of constructing an ``optimal'' motion between two points $p_0,p_1\in \mathcal M^{+}(C_m,\ell)$. Recall that $\mathcal M^{+}(C_m,\ell)$ is identified with a subset
\[
W_{m,\ell}\subset \Xi\subset (\mathbb R/2\pi\mathbb Z)^{m-2},
\]
where
\[
\Xi=\left\{\theta=(\theta_2,\cdots,\theta_{m-1})\in (\mathbb R/2\pi\mathbb Z)^{m-2}:~u(\theta)=0\right\},
\]
and
\[
u(\theta)=\sqrt{
\left(\ell_1+\sum\nolimits_{j=2}^{m-1}\ell_j\cos\theta_j\right)^2
+\left(\sum\nolimits_{j=2}^{m-1}\ell_j\sin\theta_j\right)^2}-\ell_m.
\]
The set $W_{m,\ell}$ is endowed with the induced metric from the Riemannian metric
\[
g=\sum\nolimits_{i=2}^{m-1}(\mathrm d\theta_i)^2
\]
of the ambient space $(\mathbb R/2\pi\mathbb Z)^{m-2}$. Let $\theta^0,\theta^1\in W_{m,\ell}$ be the coordinates of $p_0,p_1$, respectively, and let $f:W_{m,\ell}\to\mathbb R$ be an L-R function as in Theorem~\ref{T-1-12}. Consider the flow $\psi:\mathbb R\times W_{m,\ell}\rightarrow W_{m,\ell}$ generated by the projected vector field
\[
\eta_{a,b}(f)\left(-\nabla f+\frac{\langle \nabla f, \nabla u\rangle}{\|\nabla u\|^2}\nabla u\right).
\]
In light of Proposition~\ref{P-5-3}, for any $\delta>0$, there exists a non-negative integer $n_0$, such that $\psi_{n_0\delta}(\theta^0)$ and $\psi_{n_0\delta}(\theta^1)$ can be connected by a geodesic $\gamma^0$ in $W_{m,\ell}$ with respect to the induced metric. More precisely, the curve $\gamma^0$ is computable, as it satisfies the differential equation
\[
\frac{\mathrm d \gamma^0(t)}{\mathrm d t}=-\nabla H+\frac{\langle \nabla H, \nabla u\rangle}{\|\nabla u\|^2}\nabla u
\]
with the initial condition $\gamma^0(0)=\psi_{n_0\delta}(\theta^0)$, where
\[
H=\|\theta-\psi_{n_0\delta}(\theta^1)\|^2.
\]
Implementing the iteration process in~\eqref{E-5-4}, we obtain a geodesic
\[
\gamma^{n_0}:[0,+\infty]\to W_{m,\ell}
\]
that connects $\theta^0$ and $\theta^1$ under the renormalized metric. The family of cycle linkages parameterized by $\gamma^{n_0}$ then provides the desired motion between $p_0$ and $p_1$.

By analogy, we identify $\mathcal M^{+}(C_m)$ with a submanifold $W_m$ of $\mathbb R_{+}^m\times (\mathbb R/2\pi\mathbb Z)^{m-2}$ and construct the projected flow
$\psi: \mathbb R\times W_{m}\to W_{m}$ using the L-R function specified in Theorem~\ref{T-4-16}. Employing the renormalization algorithm, we thereby formulate a practical approach to the Cycle Configuration Refolding Problem (see Fig.~\ref{F-8}).

\begin{figure}[htbp]
    \centering
    \includegraphics[width=0.76\textwidth]{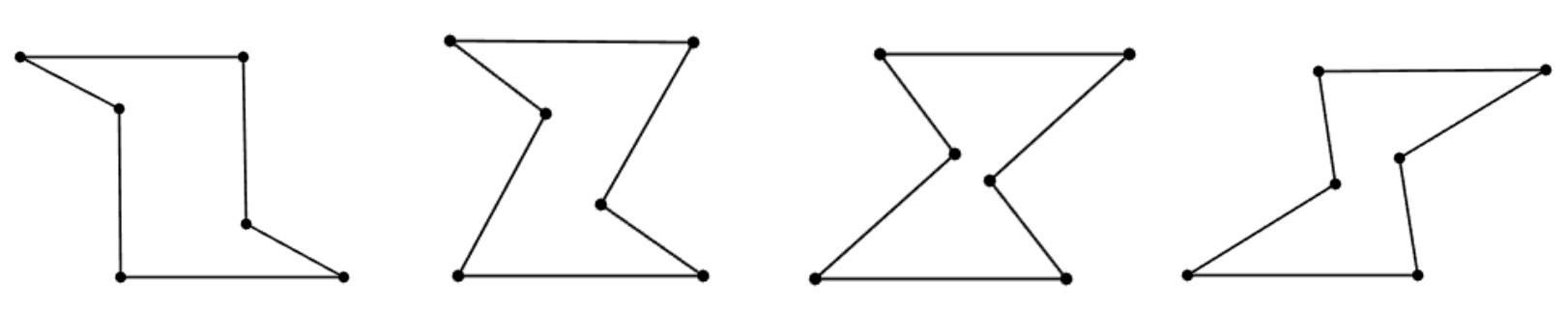}
    \caption{Refolding cycle configurations}
    \label{F-8}
\end{figure}

\section{Questions}\label{S-6}
This paper leaves a number of questions open. We consider the following to be particularly interesting topics for further investigation.

1. \emph{Given $p\in \mathcal M^{+}(C_m)$, how to construct a Lyapunov-Reeb function on $\mathcal M^{+}(C_m)$ with $p$ as its critical point?} Analogous questions can also be posed for the moduli spaces $\mathcal M^{+}(C_m,\ell)$, $\mathcal M(R_m)$, and $\mathcal M(R_m,\ell)$. Note that the existence of such functions is trivial since these spaces are diffeomorphic to Euclidean spaces, but our goal is to derive explicit expressions for such functions. If this can be done, the associated negative gradient flows will yield more direct and efficient algorithms for the Linkage (Configuration) Refolding Problem.

2. \emph{Can one construct complete Riemannian metrics (with explicit expressions) on the moduli spaces $\mathcal M(R_m)$,  $\mathcal M(R_m,\ell)$, $\mathcal M^{+}(C_m)$, and $\mathcal M^{+}(C_m,\ell)$?} This problem is motivated by two objectives: $(1)$ Such metrics would facilitate the construction of geodesics between any pair of points in the moduli spaces, thereby providing a thorough resolution to the Linkage (Configuration) Refolding Problem; $(2)$ The associated volume forms and Brownian motions may offer insights into the path integral formulation in quantum physics (see~\cite{Charles-2010}). For related research on random polygons, refer to the works of Pardon~\cite{Pardon-2011,Pardon-2012}.

3. Let $\mathcal M^{+}(C_m,\mathbb H^2)$ denote the set of isometry classes of positively oriented, simple $m$-gons in the hyperbolic plane $\mathbb H^2$. \emph{What can be said about the topology of  $\mathcal M^{+}(C_m,\mathbb H^2)$?} Analogous questions arise for the moduli spaces
$\mathcal M(R_m,\mathbb H^2)$, $\mathcal M(R_m,\ell, \mathbb H^2)$,
and $\mathcal M^{+}(C_m,\ell, \mathbb H^2)$. A pivotal step in addressing these problems is to establish the local expansive property for hyperbolic linkages, which requires deriving results parallel to Theorems~\ref{T-3-2} and~\ref{T-4-11}. This endeavor proves challenging, however, because the linear programming method\textemdash particularly the dual principle\textemdash resists generalization to hyperbolic geometry.

4. Let $\mathcal M^{+}(C_\infty)$ represent the set of isometry classes of positively oriented, rectifiable simple closed curves in $\mathbb R^2$. \emph{Is it possible to construct a Lyapunov-Reeb function on $\mathcal M^{+}(C_\infty)$?} In view of Pardon's work~\cite{Pardon-2009}, $\mathcal M^{+}(C_\infty)$ is path-connected. Yet there is a need to uncover deeper topological properties of $\mathcal M^{+}(C_\infty)$ and devise practical algorithms for deforming rectifiable simple closed curves in $\mathbb R^2$. A natural approach to this end is constructing L-R functions on $\mathcal M^{+}(C_\infty)$.

\section{Appendix}\label{S-7}

In this section, we provide the proof of Lemma~\ref{L-4-5}.

\begin{proof}[\textbf{Proof of Lemma~\ref{L-4-5}}]
Consider the expression for the area
\[
A(\Delta)=\frac{1}{2}l_jl_k\sin \alpha_i.
\]
Taking the derivative with respect to $l_i$ yields
\begin{equation}\label{E-7-1}
\frac{\partial A(\Delta)}{\partial l_i}=\frac{1}{2}l_jl_k\cos \alpha_i\frac{\partial \alpha_i}{\partial l_i}.
\end{equation}
Meanwhile, from the Law of Cosines
\[
\cos \alpha_i=\frac{l_j^2+l_k^2-l_i^2}{2l_jl_k},
\]
it follows that
\begin{equation}\label{E-7-2}
\frac{\partial \alpha_i}{\partial l_i}=\frac{l_i}{l_jl_k\sin \alpha_i}.
\end{equation}
Combining~\eqref{E-7-1} and~\eqref{E-7-2}, we have
\begin{equation}\label{E-7-3}
\frac{\partial A(\Delta)}{\partial l_i}=\frac{l_i\cot\alpha_i}{2},
\end{equation}
which concludes part $(i)$.

For part $(ii)$, differentiating both sides of~\eqref{E-7-3} with respect to $l_i$ gives
\[
\frac{\partial A^2(\Delta)}{\partial l_i^2}=\frac{\cot\alpha_i}{2}-\frac{l_i}{2\sin^2 \alpha_i}\frac{\partial \alpha_i}{\partial l_i}
\]
Substituting~\eqref{E-7-2} into the above expression, we derive
\[
\frac{\partial^2 A(\Delta)}{\partial l_i^2}=\frac{\cot\alpha_i}{2}
    -\frac{1}{2l_il_jl_k}\left(\frac{l_i}{\sin\alpha_i}\right)^3.
\]

It remains to consider part $(iii)$. By virtue of~\eqref{E-7-3}, we infer
\[
\frac{\partial^2 A(\Delta)}{\partial l_i\partial l_j}=-\frac{l_i}{2\sin^2 \alpha_i}\frac{\partial \alpha_i}{\partial l_j}.
\]
As in previous steps, the Law of Cosines yields
\[
\frac{\partial \alpha_i}{\partial l_j}=-\frac{l_i\cos\alpha_k}{l_jl_k\sin \alpha_i}.
\]
Combining the above two formulas, we obtain
\[
\frac{\partial A^2(\Delta)}{\partial l_i\partial l_j}=\frac{1}{2l_il_jl_k}\left(\frac{l_i}{\sin \alpha_i}\right)^3\cos\alpha_k.
\]
This completes the proof.
\end{proof}

\section*{Acknowledgments}
Te Ba was supported by NSFC Grant No. 11631010, Ze Zhou was supported by NSFC Grant No. 12422105 and Grant No. 12371075.

\end{document}